\newtheorem{theorem}{Theorem}
\newtheorem{lemma}[theorem]{Lemma}
\newtheorem{corollary}[theorem]{Corollary}
\newtheorem{prop}[theorem]{Proposition}
\newtheorem*{theorem*}{Theorem}
\newtheorem*{corollary*}{Corollary}
\theoremstyle{definition}
\newtheorem*{remark*}{Remark}
\newtheorem*{definition*}{Definition}
\newtheorem*{example*}{Example}
\numberwithin{theorem}{section}
\newcommand{\BR}{\mathbb R} 
\newcommand{\BN}{\mathbb N} \newcommand{\BQ}{\mathbb Q}
 \newcommand{\BZ}{\mathbb Z}
\newcommand{\aut}{\textup{Aut}(F_n)}
\newcommand{\wh}{\widehat}
\newcommand{\nid}{\noindent}
\newcommand{\mP}{\mathcal{P}}
\newcommand{\mC}{\textup{span }\wh{\mathcal{C}(\underline{\Gamma})}}
\DeclareMathOperator{\shd}{\mathcal{S}}
\DeclareMathOperator{\oshd}{\widetilde{\shd}}
\DeclareMathOperator{\drk}{\mathfrak{D}}
\newcommand{\comment}[1]{}
\begin{document}
\title    {Homological shadows of attracting laminations}
\author   {Asaf Hadari}
\date{\today}
\begin{abstract}
Given a free group $F_n$,  a fully irreducible automorphism $f \in \aut$, and a generic element $x \in F_n$, the elements $f^k(x)$ converge in the appropriate sense to an object called an attracting lamination of $f$. When the action of $f$  on $\frac{F_n}{[F_n, F_n]}$ has finite order, we introduce a homological version of this convergence, in which the attracting object is a convex polytope with rational vertices, together with a  measure supported at a point with algebraic coordinates.  \end{abstract}
\maketitle

\vspace {10mm}
\section{Introduction}

Let $F_n = \langle a_1, \ldots, a_n \rangle $ be the free group on $n$ generators. Any automorphism  $f: F_n \to F_n$ induces an automorphism $f_{ab}$ of $H_1(F_n, \BZ) \cong \frac{F_n}{[F_n,F_n]} \cong \BZ^n$. This gives a natural map $\aut \to GL_n(\BZ)$In this paper we attach a coarse-geometric, homological invariant to any fully irreducible automorphism $f$ such that $f_{ab}$ has finite order.  This invariant addresses the following questions.

\begin{enumerate}
\item Given a word of the form $w = f^N(x)$, where $x \in F_n$ and $N \gg 0$, what are the images in $\BZ^n$ of \emph{subwords} of $w$? (Where a subword of $w$ is an element $b \in F_n$ such that $w = abc$ for some $a,c \in F_n$, and this  product is reduced. )
\item If a subword of $w$ is chosen at random (for a suitable definition of random), what should we expect its image in $\BZ^n$ to be?
\item How do the answers to the above questions depend on $N,x$ and the choice of generating set for $F_n$?
\end{enumerate}

\subsection{Two  examples.}
The following automorphisms showcase the kinds of invariants we are interested in. Let $F _3 = \langle a,b,c \rangle$ be the free group on three generators. Consider the automorphism $f$ given by $f(x) = bxb^{-1}$. Note that $f_{ab} = I_3$. Given any $N$, we have that $w = f^N(a) = b^N a b^{-N}$. Restricting ourselves to subwords of $w$ obtained by taking all letters from the first to the $j$-th, for some $j$,  we have that the possible images of such elements are exactly those vectors of the form:
$$\bigcup _{i = 0}^N \{ \left(\begin{array}{c} 0 \\ i \\ 0 \end{array}\right), \left(\begin{array}{c} 1 \\ N - i \\ 0 \end{array}\right) \} $$

 Coarsely speaking, this set forms a ray in $\BR^3$, namely the positive half of the $y$-axis. Now suppose that we choose a random subword of $w$ using the following model: choose a number $i$ uniformly from $1$ to $2N+1$, and then take the subword of $w$ beginning at the first letter and ending at $i$-th letter. Under this model, all of the elements of the set described above are equally likely. Coarsely, we get the Lebesgue measure on the ray.  \\

\nid Now, consider the more complicated automorphism given by:

$$\begin{array}{l}g(a) = cbc^{-1}bab^{-1}cb^{-1}c^{-1} \\ g(b) = cbc^{-1} \\ g(c) = cbab^{-1}cbc^{-1}ba^{-1}b^{-1}cb^{-1}c^{-1} \end{array} $$

\nid Once again, we have that $g_{ab} = I_3$. However, in this case, it is very difficult to find an explicit expression for $w = g^N(a)$. The figures below display what we will call the shadows of $g(a), \ldots, g^6(a)$ (arranged from left to right). In each, we show the images in $\BR^3$ of all subwords beginning at the first letter, and we connect two such images with a segment if they corresponds to subwords, one of which is attained from the other by adding one letter.

\medskip
\includegraphics[width=20mm]{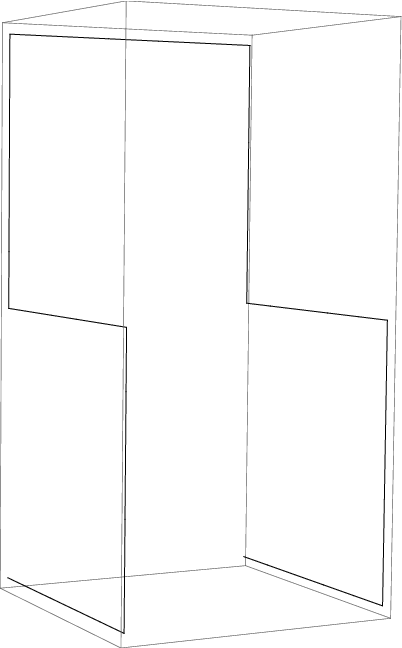} \includegraphics[width=20mm]{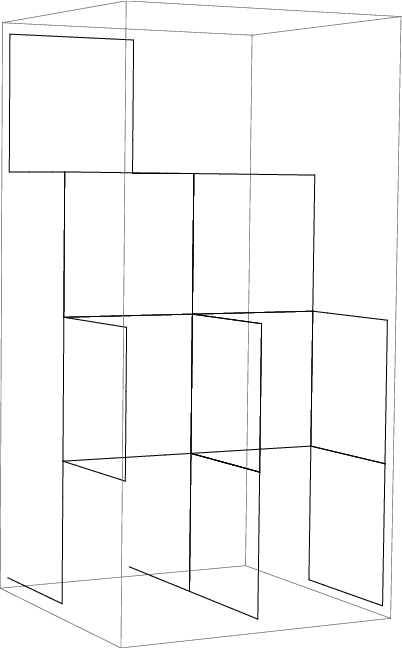} \includegraphics[width=20mm]{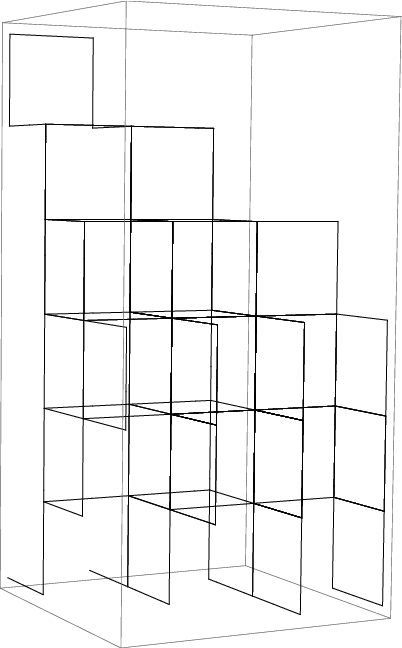} \includegraphics[width=20mm]{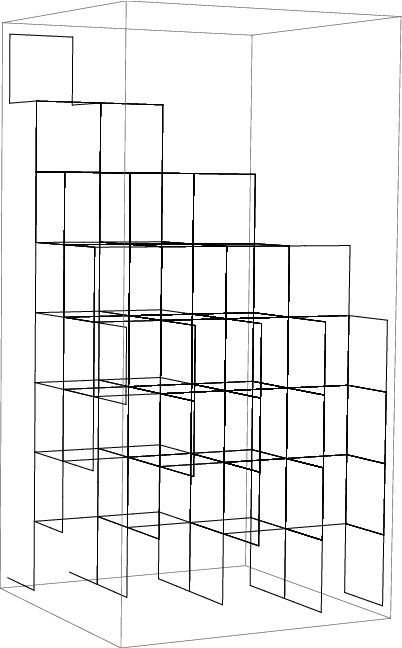} \includegraphics[width=20mm]{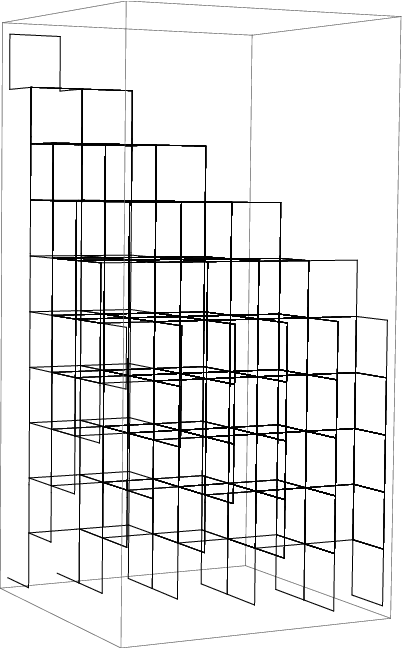} \includegraphics[width=20mm]{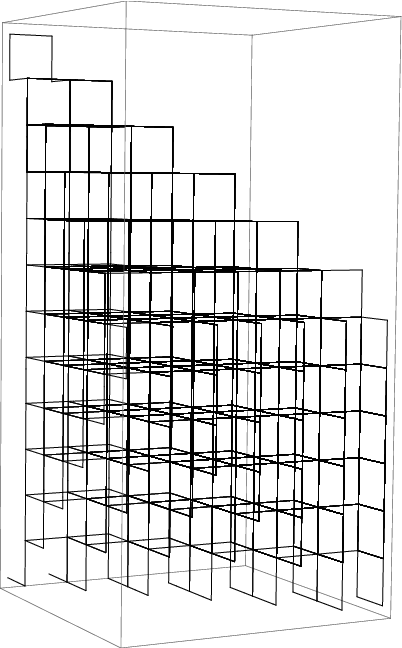}      \\

\nid Looking at the figures, a clear pattern seems to emerge. Our goal in this paper is to describe this pattern for automorphisms of this type.

\subsection{Shadows and darkness}
In this section we give the definitions necessary to formalize the above discussion, and to state our Theorems. Let $G$ be a graph with $N$ edges. Number the edges $d_1, \ldots, d_N$. Let $l: E(G) \to \BR_{> 0}$ be a function that assigns to each edge a positive length. Choose an identification of the $i$-th edge with the interval $[0, l(d_i)]$. Note that this identification also gives us a preferred orientation on each edge. \\

 To any path $p: [0,1] \to G$ in the graph, we associate a path $\oshd p: [0,1] \to \BR^N$ in the following way.  Define a form  $\delta$ on $G$ by setting by setting $\delta|_{V(G)} = 0 dt$ and $\delta | _{d_i} = \frac{1}{l(d_i)} e_i dt$ where $e_i$ is the $i$-th vector of the standard basis for $\BR^N$. Let $\oshd p (t) = \int_{0}^t p^*\delta$ and let $\shd p = \textup{Im} \oshd p$. We call $\shd p$ the \emph{shadow of p}, and $\oshd p$ the \emph{parametrized shadow of $p$}.   For every integer $k$, we set $\oshd_k p$ to be the path given by $\oshd_k p (t) = \frac{1}{k} \oshd p(t)$. Similarly, we let  $\shd_k p = \textup{Im} \oshd p$.  Note that if $p(t) \in V(g)$, then the $i$-th coordinate of $\oshd p(t)$ is the number of times (counted with direction) in the interval $[0,t]$ that $p$ passes through the edge $d_i$.

To the path $p$, we also associate a measure $\drk p$ on $\shd p$ by the rule $\drk^l p = (\oshd p)_* \lambda$, where $\lambda$ is the Lebesgue measure on $[0,1]$. We call this measure \emph{the darkness of the shadow of p}. Similarly, we define $\drk_k^l p = (\oshd_k)_* \lambda$. Note that if $l$ is the constant function $1$, then given $x \in \BR^N$, we have that $$\drk^l p \big[B_1(x)\big] = \frac{\# \textup{ times } p \textup{ passes through }x }{\textup{total path length of } p}$$

\nid When there is no chance for confusion, we will omit the $l$ superscript, and use $\drk p, \drk_k p$.

Now, Let $G = \mathcal{R}_n$ be the graph given by the join of $n$ circles. Label the $n$ edges with the labels $a_1, \ldots, a_n$. Given any length function, $l: E(G) \to \BR$, we can associate to each element $w \in F_n$ a unique path $p_w: [0,1] \to G$ that travels at constant speed and traverses the edges given by the word $w$. We will often confuse the path $p_w$ with the word $w$, and write  $\shd w, \drk w$, etc.  instead of $\shd p_w, \drk p_w$. \\

\nid \textbf{Example.} Consider the following subword of $F_2$:  $w = ab^3a^{-2}b^{-1}a^2$. The figure below shows the path $\oshd w$. The image of this path is $\shd w$. The measure $\drk w$ the sum of $1$-dimensional Lebesgue measures supported on each of the segments.

$$ \includegraphics[width=30mm]{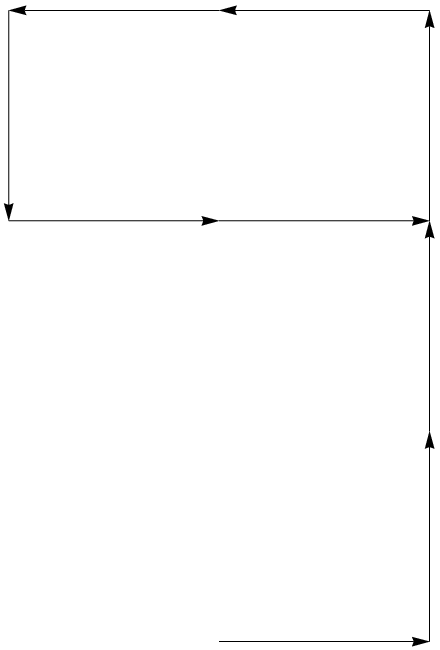} $$

\subsection{The main theorems.}

Recall that an element $f \in \aut$ is called \emph{fully irreducible} if there are no proper $f^i$-invariant free factors of $F_n$ for any $i \geq 1$.
\begin{theorem}\textbf{The Shape Of The Shadow.}\label{shadowshape} Let $f \in \aut$ be a fully irreducible automorphism such that $f_{ab}$ has finite order. Then there exists a convex polytope $\shd_{\infty} f \subset \BR^n$ with rational vertices, such that for any $x \in F_n$ with infinite $f$-orbit, it's true that
$$\lim_{k\to \infty} \shd_k f^k(x) = \shd_{\infty} f $$
where the the convergence above is in the Hausdorff topology.  Furthermore, if  $\phi \in \aut$, and $g = f^\phi$ then $\shd_{\infty} g = \phi_{ab} \shd_{\infty} f$.
\end{theorem}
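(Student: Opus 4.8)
\medskip
\noindent\emph{Proof proposal.}

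\emph{Reduction to generators.} Fix a train track representative of $f$; it supplies a bounded cancellation constant that is uniform in $k$, so that for a reduced word $x=x_1\cdots x_N$ (each $x_l$ a generator or its inverse) the reduced word $f^k(x)$ is obtained from the concatenation $f^k(x_1)\cdots f^k(x_N)$ by deleting $O(1)$ letters at each junction, uniformly in $k$. The $l$-th block then starts at height $f_{ab}^k\big((x_1\cdots x_{l-1})_{ab}\big)+O(1)$, which is $O(1)$ because $f_{ab}$ has finite order, and deleting $O(1)$ letters from the ends of a word moves its shadow by $O(1)$ in the Hausdorff distance $d_H$. Hence $\shd f^k(x)$ is $O(1)$-close to $\bigcup_l\big(v_l+\shd f^k(x_l)\big)$ with $\|v_l\|=O(1)$, so after dividing by $k$ its distance from $\bigcup_l \shd_k f^k(x_l)$ tends to $0$. (The hypothesis that $x$ has infinite $f$-orbit is used only to guarantee $|f^k(x)|\to\infty$, so that the reduction is meaningful; an $f$-periodic $x$ has bounded length and $\shd_k f^k(x)\to\{0\}$ instead, while for a single generator $|f^k(a_i)|\to\infty$ automatically.) So it suffices to prove that $\shd_k f^k(a_i)$ converges in the Hausdorff topology to a rational polytope $P$ \emph{independent of $i$}; one then sets $\shd_\infty f:=P$, and $\shd_k f^k(x)\to\bigcup_l P=P$ for all $x$ with infinite orbit.

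\emph{The multiscale picture.} Working with the train track representative (so iterates carry no cancellation, up to an $O(1)$ recoding), the word $f^k(a_i)$ is tiled by the words $f^{k-1}(c)$, $c$ a letter of $f(a_i)$, each of which is tiled by $f^{k-2}(\,\cdot\,)$, and so on through $k$ levels. Following a prefix of $f^k(a_i)$ down this hierarchy, its abelianized height becomes a sum of $k$ ``tile offsets'', each of the form $\pm f_{ab}^{\,j}(v)$ with $0\le j<m:=\mathrm{ord}(f_{ab})$ and $v$ the abelianization of a prefix of some $f(a_l)$. This is exactly where finite order of $f_{ab}$ is used: it confines the tile offsets to a single finite set $\mathcal H\subset\BZ^n$, so no exponential drift accumulates and $1/k$ is the correct scaling. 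The admissible offset sequences are precisely the directed walks in a finite automaton $\mathcal A$ whose states record (current generator, orientation, level $\bmod\, m$), whose transitions record the choice of tile, and whose edge labels are the offsets; the depth-$k$ refinement tree of $f^k(a_i)$ realizes all walks of length $\le k$ out of the state of $a_i$, and full irreducibility (which forbids any proper $f$-invariant substructure) makes $\mathcal A$ strongly connected.

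\emph{Convergence to a mean-value polytope.} Let $P$ be the convex hull of the mean labels $\tfrac1{|C|}\sum_{e\in C}(\text{label of }e)$ over all directed cycles $C$ of $\mathcal A$; this is the convex hull of finitely many rational vectors, hence a rational polytope, and the ``reverse-and-invert'' symmetry of $\mathcal A$ (which negates labels, reflecting that the shadow of the inverse word is the negative of the shadow) gives $P=-P$, so $0\in P$. I claim $\shd_k f^k(a_i)\to P$. For $\limsup\subseteq P$: any walk of length $d\le k$ splits into directed cycles of total length $\ell\le d$ plus a simple path of bounded length, so its rescaled partial sums lie in $\tfrac\ell k\, P+O(1/k)\subseteq P+O(1/k)$, using $0\in P$ and convexity. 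For $\liminf\supseteq P$: given a convex combination $\sum_r\alpha_r\mu_r$ of cycle means, strong connectivity lets one build a walk out of the state of $a_i$ that reaches a cycle $C_1$, loops it $\approx\alpha_1 k$ times, reaches $C_2$, loops it $\approx\alpha_2 k$ times, and so on, of total length $\le k$; this walk corresponds to a position in $f^k(a_i)$, hence to a point of $\shd_k f^k(a_i)$ within $O(1/k)$ of $\sum_r\alpha_r\mu_r$. Hence $\shd_k f^k(a_i)\to P=:\shd_\infty f$, the limit being the same for every generator, and with the first step this proves the main assertion.

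\emph{The conjugation formula, and the hard part.} For a \emph{fixed} $\phi\in\aut$, reading $\phi(w)$ one block per letter of $w$ gives $d_H\big(\shd\phi(w),\phi_{ab}\,\shd w\big)\le C_\phi$ uniformly in $w$ (bounded cancellation and bounded block length for the fixed map $\phi$, together with $\|\phi_{ab}\|=O(1)$), hence $d_H\big(\shd_k\phi(w),\phi_{ab}\,\shd_k w\big)\to0$ for any sequence of words. Since $g=f^\phi=\phi f\phi^{-1}$ satisfies $g^k(x)=\phi\big(f^k(\phi^{-1}x)\big)$ and $\phi^{-1}x$ has infinite $f$-orbit, $\shd_k g^k(x)$ is within $o(1)$ of $\phi_{ab}\,\shd_k f^k(\phi^{-1}x)\to\phi_{ab}\,\shd_\infty f$, whence $\shd_\infty g=\phi_{ab}\,\shd_\infty f$. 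The \textbf{main obstacle} is the inclusion $\liminf\supseteq P$: one must extract from the \emph{single deterministic} word $f^k(a_i)$ prefixes realizing every point of the limit polytope, which genuinely uses the combinatorial mixing coming from full irreducibility. This is precisely what the crude one-level estimate of the conjugation step cannot do: applied with $\phi=f^k$ it involves $\max_l|f^k(a_l)|\sim\lambda^k$, which is fatal, whereas the multiscale analysis is immune to this because each level contributes only a bounded offset (the potentially dangerous factor $\|f_{ab}^k\|$ being $O(1)$ by finite order).
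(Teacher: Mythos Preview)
Your strategy --- encode the recursive tiling of $f^k(a_i)$ by a finite labelled directed graph, identify the limit shadow as the image of its cycle polytope, prove $\limsup\subseteq P$ by cycle decomposition and $\liminf\supseteq P$ by concatenating loops via strong connectivity, then handle conjugation by bounded cancellation for a \emph{fixed} automorphism --- is essentially the paper's. The paper's half-point graph $HP_\phi$ plays the role of your $\mathcal A$ (after a preliminary reduction to $f_{ab}=I_n$, which eliminates your level-mod-$m$ coordinate), and the paper's $\Sigma_1$ in Proposition~3.6 is precisely the cycle polytope.

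However, your reduction to generators rests on a false claim: that a train track representative ``supplies a bounded cancellation constant that is uniform in $k$,'' i.e.\ that only $O(1)$ letters are deleted at each junction when reducing $f^k(x_1)\cdots f^k(x_N)$. In fact the bounded-cancellation constant of $f^k$ grows on the order of $\rho(f)^k$: Cooper's bound gives $C(f^k)\le C(f)\sum_{j<k}\lambda^j$, and at a genuine illegal turn the cancellation is typically exponential. What the train track structure buys is zero cancellation on \emph{legal} paths in the train-track graph $G$ --- usually not the rose --- and an arbitrary $x$ is not legal. With the correct cancellation your argument yields only $\shd_k f^k(x)\subset P+o(1)$ (reduced shadow sits inside unreduced) together with $P\subset B_{O(1)}\big(\shd_k f^k(x)\big)$, which falls short of Hausdorff convergence. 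The paper closes exactly this gap by a different sandwich: working on $G$, it invokes Bestvina--Feighn--Handel to find, after one iterate, a legal subpath $p''$ whose images $\phi^k(p'')$ persist inside $(\phi^{k}p)^{red}$ for all $k$, and then traps $\shd^{red}_k\phi^k(p)$ between $\shd_k\phi^k(p'')$ and $\shd_k\phi^k(p)$, both of which converge to $\shd_\infty\phi$ by the legal case. Two smaller points: your justification of $P=-P$ via ``reverse-and-invert'' is not right (one has $\shd(w^{-1})=\shd(w)-\wh{w}$, not $-\shd(w)$), though $0\in P$, which is all you actually use, holds simply because the empty prefix always lies in the shadow; and your one-line appeal to full irreducibility for strong connectivity of $\mathcal A$ skips the Perron--Frobenius argument (Lemma~3.4) that the paper carries out.
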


\nid Notice that the last statement in the Theorem addresses the question of how the limiting polytope is affected by changing the generating set for $F_n$, since this corresponds to conjugating the automorphism $f$. Thus, Theorem \ref{shadowshape} canonically associates to $f$ a $GL_n(\BZ)$ orbit of a convex polytope with rational vertices.

We call the pair $(\shd_\infty, \drk_\infty f)$ the attracting homological lamination of $f$.

\begin{theorem} \textbf{The Darkness Of The Shadow.} \label{darkshadow}
Suppose $f$ is fully irreducible, and $f_{ab}$ has finite order. Let $\rho = \rho(f)$ be the dilatation of $f$. Then there exists a measure $\drk_{\infty}f$ supported at a point $\overline{\drk_{\infty}f} \in \BQ[\rho]^n$ such that for any length function $l: E(\mathcal{R}_n) \to \BR_{+}$, and for any $x \in F_n$ with infinite $f$-orbit it's true that:
$$\lim_{k \to \infty} \drk_k f^k(x) =  \drk_{\infty} f$$

\nid where the above convergence is in the weak-* topology. Furthermore, if $\phi \in \aut$, and $g = f^\phi$ then $\overline{\drk_{\infty} g} = \phi_{ab} \overline{\drk_{\infty} f}$.
\end{theorem}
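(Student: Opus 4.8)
Granting the first part of the theorem, $\drk_{\infty}f$ is the point mass at some $v:=\overline{\drk_{\infty}f}\in\BQ[\rho]^n$; likewise $g=f^{\phi}$ is again fully irreducible with $g_{ab}$ of finite order and $\rho(g)=\rho(f)$, so $\drk_{\infty}g$ is the point mass at some $v'$, and the claim is $v'=\phi_{ab}v$. Since $g^k(x)=\phi\bigl(f^k(\phi^{-1}(x))\bigr)$ and $x$ has infinite $g$-orbit exactly when $z:=\phi^{-1}(x)$ has infinite $f$-orbit, it suffices to fix such a $z$ and a length function $l$ and show $\drk_k\,\phi\bigl(f^k(z)\bigr)\to\delta_{\phi_{ab}v}$ in the weak-$*$ topology.

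The only input about $f$ I will use is the combinatorial reformulation of $\drk_k f^k(z)\to\delta_v$: writing $w_k:=f^k(z)$ as a reduced word $a_{i_1}^{\varepsilon_1}\cdots a_{i_{L_k}}^{\varepsilon_{L_k}}$ of length $L_k\asymp\rho^k$, and letting $u_j\in\BZ^n$ be the abelianization of its length-$j$ prefix, one has for every $\eta>0$
$$\#\{\, j\le L_k \ :\ \|\tfrac1k u_j-v\|>\eta \,\}=o(L_k)\qquad(k\to\infty).$$
This is equivalent to the stated weak-$*$ convergence because $\oshd w_k$ is affine of slope $\pm e_i/l(d_i)$ along each edge and the reparametrization carrying arclength (and $l$-length) to $[0,1]$ is bi-Lipschitz with constants depending only on $l$; if the main part of the theorem is proved directly in this form, this step is vacuous.

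Now apply $\phi$. Let $C$ be the bounded cancellation constant of $\phi$ (Cooper's Bounded Cancellation Lemma), and put $M=\max_i|\phi(a_i)|$, $M'=\max_i|\phi^{-1}(a_i)|$. The reduced word $\phi(w_k)$ splits into $L_k$ consecutive (possibly partly cancelled) blocks, the $j$-th being the surviving part of $\phi(a_{i_j}^{\varepsilon_j})$. Since $w_k=w_{\le j}w_{>j}$ with both factors reduced and the product reduced, bounded cancellation shows that the prefix of $\phi(w_k)$ ending at the end of block $j$ is the reduced form of $\phi(w_{\le j})$ with at most $C$ terminal letters removed; hence its abelianization equals $\phi_{ab}(u_j)$ up to an additive error of norm $\le C$, and any position inside block $j$ has prefix-abelianization within $M$ of that. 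Moreover $|\phi(w_{\le j})|$ lies in $[\,j/M',\,Mj\,]$ in reduced length, so block $j$ of $\phi(w_k)$ carries $\drk_k\phi(w_k)$-mass at most $C''/L_k$ with $C''=C''(\phi,l)$. Applying this to the $o(L_k)$ indices $j$ singled out above, we get that off a set of $\drk_k\phi(w_k)$-mass $o(1)$ the point $\oshd_k\phi(w_k)(t)$ lies within $C\eta+O(1/k)$ of $\phi_{ab}(v)$; letting $k\to\infty$ and then $\eta\to0$ gives $\drk_k\phi(w_k)\to\delta_{\phi_{ab}(v)}$, i.e.\ $\overline{\drk_{\infty}g}=\phi_{ab}v$.

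The step I expect to be the genuine obstacle is the uniform control of the block lengths of $\phi(w_k)$ — that applying $\phi$ distorts the $[0,1]$-parametrization by a factor bounded independently of $k$ — together with the observation that this bound, combined with $\drk_{\infty}f$ being an honest point mass rather than merely a measure that is concentrated on average, is exactly what prevents the $o(L_k)$ exceptional blocks from being inflated into a set of positive limiting darkness after the reparametrization induced by $\phi$. Everything else is the Bounded Cancellation Lemma and routine bookkeeping with the $1/k$ normalization and the fixed length function.
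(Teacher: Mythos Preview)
Your proposal addresses only the conjugation-equivariance clause (as you explicitly say), and for that part it is correct and follows essentially the same route as the paper's \S3.2.4: both arguments use bounded cancellation to show that $\drk_k\,\phi(w)$ is controlled, up to multiplicative constants and a bounded spatial error, by $(\phi_{ab})_*\drk_k\,w$, and both then exploit the fact that $\drk_\infty f$ is a \emph{point} mass to absorb the multiplicative slack and conclude. Your prefix-count reformulation is a repackaging of the paper's edge-by-edge measure inequalities $\frac{1}{2C^3}(\phi_{ab})_*\drk v[X]\le \drk\,\phi(v)[X]\le 2C^3(\phi_{ab})_*\drk v[B_{R+C}(X)]$; the key inputs---Cooper/Thurston bounded cancellation, bi-Lipschitz control of word length under $\phi$, and the point-mass nature of the limit---are identical.
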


\nid Thus, Theorem \ref{darkshadow} canonically associates to $f$ a $GL_n(\BZ)$ orbit of a point in $\BQ[\rho]^n$.

In \cite{BeFeH} Bestvina, Feighn and Handel define a notion of a stable lamination $\mathcal{L}$ for a fully irreducible automorphism $f \in \aut$, together with a topology in which given any $x \in F_n$ with infinite $f$-orbit,  the sequence  $\{f^i(x) \}_{i=1}^{\infty}$ converges to such a lamination. In analogy to this, we call the set $\shd_{\infty} f$, together with the measure $\drk_{\infty} f$ the \emph{homological shadow} of $\mathcal{L}$.

Throughout the proof, we will use the fact that $f$ is fully irreducible in only one way - that $f$ has a train-track representative with Perron-Frobenius transition matrix (see below for definitions). The class of automorphisms that have these properties is strictly larger than fully irreducible elements, and contains for example automorphisms induced by pseudo-Anosov diffeomorphisms of surfaces with many boundary components. We summarize this by stating the following theorem. 

\begin{theorem} Theorems \ref{shadowshape}, \ref{darkshadow} remain true if we replace the condition that $f$ projects to a fully irreducible element of $\textup{Out}(F_n)$ to the condition that $f$ projects to an element with a train track representative and a Perron-Frobenius transition matrix. 
\end{theorem}

\subsection{Expanding on the analogy to laminations}
 The analogy between laminations and the objects provided by Theorems \ref{shadowshape}, \ref{darkshadow} can be made more precise, by considering the following situation.

Suppose $\Sigma = \Sigma_{g,1}$ is a surface of genus $g$ with one boundary component, and let $p$ be a point on this component. Suppose $f \in \textup{Mod}(\Sigma)$ is a pseudo-Anosov mapping class that fixes the boundary component pointwise. The map $f$ induces a fully irreducible automorphism of the free group $\pi_1(\Sigma,p) \cong F_{2g}$. Suppose $\gamma$ is a non-peripheral closed curve on $\Sigma$. In \cite{flp} it is shown that in the appropriate topology, the forward orbit $\{f^i(\gamma)\}_{i=1}^\infty$ converges to an object called a stable (or attracting) projective measured lamination, $\mathcal{L}$.

Given any other curve $\delta$ on $\Sigma$, it is possible to define its intersection number with $\mathcal{L}$, which we denote $h_{\mathcal{L}}(\delta) = i(\delta, \mathcal{L})$. Let $\mathcal{C}$ be the set of simple closed non-peripheral curves in $\Sigma$. Consider the functions $h_i: \mathcal{C} \to \BR$ given by $h_i(\delta) = i(f^i(\gamma), \delta)$, where $i(\cdot, \cdot)$ is the geometric intersection number. Then there is a sequence of positive numbers $\{\lambda_i\}_{i=1}^{\infty}$ such that for any $\delta \in \mathcal{C}$ it's true that:

$$\lim_{i \to \infty} \lambda_i h_i(\delta) = h_{\mathcal{L}} (\delta) $$

A similar situation holds in our case. Let $\wh{i}$ be the oriented intersection form on $\mathcal{C}$. The form $\wh{i}$ is given by a symplectic form $\omega$ on $H_1(\Sigma,\BR)$. Suppose that $f$ is a pseudo Anosov mapping class as above, such that $f_{ab}$ has finite order. Let $x\in F_{2g}$ have infinite $f$-orbit. Given any word $w \in F_{2g}$, and a number $j$, let $\lceil w \rceil_j$ be the subword of $w$ obtained by taking the first through $j$-th letters of $w$ (and set $\lceil w \rceil_0 = e$). Given a number $t \in [0,1]$, let $r_i(t) = \lfloor it \rfloor$. Define a function $h_i: \mathcal{C} \times [0,1] \to \BR$ by setting:

$$h_i (\delta, t) = \wh{i} \big(\lceil f^{i}(x)\rceil_{r_i(t)}, \delta\big)$$

Now fix a curve $\delta$, and let $\wh{\delta}$ be its image in $H_1(\Sigma, \BR)$. Consider the measures $\sigma_i = \big(\frac{1}{i}h_{i}(\delta, t)\big)_* \lambda$, where $\lambda$ is the Lebesgue measure on $[0,1]$. Let $R_i = \textup{Range} \big[\frac{1}{i}h_i(t,\delta) \big]$. Then Theorems \ref{shadowshape}, \ref{darkshadow} immediately give:

\begin{corollary} In the notation above we have that:
\begin{enumerate}
\item In the Hausdorff topology:
$$\lim_{i \to \infty} R_i = \omega(\wh{\delta}, \shd_{\infty} f) $$
\item Let $\sigma_{\infty}$ be a probability measure concentrated at the point $\omega(\wh{\delta}, \overline{\drk_{\infty}f})$. Then $$\lim_{i \to \infty} \sigma_i = \sigma_\infty$$
\nid where the above convergence is in the weak-* topology.
\end{enumerate}
\end{corollary}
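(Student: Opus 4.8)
The plan is to recognize $R_i$ and $\sigma_i$, up to errors of order $1/i$, as the image of the rescaled shadow $\shd_i f^i(x)$ and the pushforward of its darkness $\drk_i f^i(x)$ under a single fixed linear functional, and then to feed Theorems \ref{shadowshape} and \ref{darkshadow} into the elementary facts that linear maps carry Hausdorff limits to Hausdorff limits and weak-$*$ limits to weak-$*$ limits. Since $f$ is pseudo-Anosov and fixes $p$, it induces a fully irreducible automorphism of $\pi_1(\Sigma,p)\cong F_{2g}$, and $f_{ab}$ has finite order by hypothesis, so Theorems \ref{shadowshape} and \ref{darkshadow} apply to it. First I would clear away the algebra: because $\wh i$ is computed by the symplectic form $\omega$, we have $\wh i(w,\delta)=\omega(\wh\delta,\overline w)$ for every $w\in F_{2g}$, where $\overline w$ denotes the class of $w$ in $H_1(\Sigma,\BR)\cong\BR^{2g}$, so if $L\colon\BR^{2g}\to\BR$ is the linear functional $L(v)=\omega(\wh\delta,v)$ then $h_i(\delta,t)=L\big(\overline{\lceil f^i(x)\rceil_{r_i(t)}}\big)$. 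Next, work with the rose $\CR_{2g}$ and the constant length function $l\equiv 1$: by the definition of the parametrized shadow, the value of $\oshd p_{f^i(x)}$ at a vertex is exactly the class of the corresponding prefix, $\oshd p_{f^i(x)}\big(j/|f^i(x)|\big)=\overline{\lceil f^i(x)\rceil_j}$, and $\oshd p_{f^i(x)}$ interpolates linearly between consecutive such values by a single standard basis vector, so $\tfrac1i\overline{\lceil f^i(x)\rceil_j}$ is the $j$-th vertex of $\shd_i f^i(x)$. Provided $r_i$ is calibrated so that $t\mapsto\lceil f^i(x)\rceil_{r_i(t)}$ sweeps out every prefix of $f^i(x)$ at uniform speed (equivalently $r_i(t)=\lfloor|f^i(x)|\,t\rfloor$), it follows that $R_i$ is precisely $L$ applied to the vertex set of $\shd_i f^i(x)$, while $\tfrac1i h_i(\delta,\cdot)$ differs in sup norm from $L\circ\oshd_i p_{f^i(x)}$ by at most $\|L\|/i$ (the gap between a step function and its piecewise-linear interpolation), so $\sigma_i$ lies within $O(1/i)$ of $L_*\big(\drk_i f^i(x)\big)$ in, say, the Prokhorov metric.

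Then I would pass to the limit. The vertices of $\shd_i f^i(x)$ are $O(1/i)$-dense in $\shd_i f^i(x)$ and $A\mapsto L(A)$ is $\|L\|$-Lipschitz for the Hausdorff metric, so $R_i$ is $O(1/i)$-close in Hausdorff distance to $L(\shd_i f^i(x))$; by Theorem \ref{shadowshape} this gives $R_i\to L(\shd_\infty f)=\omega(\wh\delta,\shd_\infty f)$, and since $\shd_\infty f$ is a compact convex polytope, its image under the linear functional $L$ is a compact interval, which is the first assertion. Likewise, Theorem \ref{darkshadow} with $l\equiv 1$ gives $\drk_i f^i(x)\to\drk_\infty f$ weak-$*$; pushforward along the continuous map $L$ preserves weak-$*$ convergence, so $L_*\big(\drk_i f^i(x)\big)\to L_*(\drk_\infty f)$, and combined with the $O(1/i)$ comparison this yields $\sigma_i\to L_*(\drk_\infty f)$ weak-$*$. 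As $\drk_\infty f$ is the point mass at $\overline{\drk_\infty f}$, its image $L_*(\drk_\infty f)$ is the point mass at $L(\overline{\drk_\infty f})=\omega(\wh\delta,\overline{\drk_\infty f})=\sigma_\infty$, which is the second assertion.

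I do not anticipate a genuine obstacle here: all the substance lives in Theorems \ref{shadowshape} and \ref{darkshadow}, and the corollary is their formal image under $v\mapsto\omega(\wh\delta,v)$. What needs care is purely bookkeeping: calibrating the index $r_i$ so that $t\mapsto\lceil f^i(x)\rceil_{r_i(t)}$ traces $p_{f^i(x)}$ at constant speed up to an $O(1/i)$ error, so that this error washes out of the Hausdorff limit and the weak-$*$ limit simultaneously; noting that the image of a shadow, and hence $\shd_\infty f$, is independent of the length function, while Theorem \ref{darkshadow} explicitly makes $\drk_\infty f$ length-function independent, so that $l\equiv 1$ is a legitimate choice in both parts; and fixing the orientation convention for $\wh i$ so that it is literally $v\mapsto\omega(\wh\delta,v)$ rather than its negative. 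None of this is more than routine.
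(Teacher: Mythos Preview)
Your proposal is correct and matches the paper's approach: the paper gives no proof at all, simply asserting that Theorems~\ref{shadowshape} and~\ref{darkshadow} ``immediately give'' the corollary, and what you have written is precisely the unpacking of that immediacy --- push everything through the fixed linear functional $L(v)=\omega(\wh\delta,v)$ and use that linear maps respect Hausdorff limits and continuous pushforwards respect weak-$*$ limits. You have also correctly flagged the calibration of $r_i$: the paper literally writes $r_i(t)=\lfloor it\rfloor$, which only sees the first $i$ letters of a word of length $\sim\rho^i$ and cannot recover $\shd_i f^i(x)$; your reading $r_i(t)=\lfloor |f^i(x)|\,t\rfloor$ is what makes the statement true and is almost certainly the intended definition.
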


\nid Thus, if the attracting projective measured lamination $\mathcal{L}$ controls the geometric intersection numbers of a curve $\delta$ with iterates $f^i(x)$, then its homological shadow controls algebraic intersection numbers of $\delta$ with subwords of $f^i(x)$ (see \cite{Zor}, in which Zorich takes a different perspective on this problem). \\

\paragraph{\textbf{Acknowledgements.}} The author would like to thank Yair Minsky for very valuable suggestions at the inception of the project. He would also like to thank  Yael Algom-Kfir, Jayadev Athreya, Danny Calegari, Benson Farb, Thomas Koberda, Gregory Margulis, and  Igor Rivin for helpful conversations.

\section{Background and Notation}
\subsection{Graphs.} In the proof we will use various graphs. We allow both directed and undirected graphs, as well as allowing multiple edges between vertices, and edges between a vertex and itself. Given a graph $G$, we will always denote the vertex set of $G$ by $V(G)$, and its edge set by $E(G)$. If  $G$ is directed, and $e \in E(G)$, we denote its initial vertex by $\iota(e)$, and its terminal vertex by $\tau(e)$. If the graph $G$ is directed, we will denote by $\underline{G}$ its underlying non-directed graph, that is - the graph we get by forgetting the directions on each edge.

 Given a graph directed graph $G$, a \emph{path in G of length k} is a function $p: \{1, \ldots, k\} \to E(G)$ such that for each $1 \leq i \leq k-1$, it's true that $\iota(i+1) = \tau(i)$. In this case, we write $\iota(p) = \iota(1)$, $\tau(p) = \tau(k)$. The definition of a path is similar for a non-directed graph, except that every edge can be traversed in either direction. A path $p$ is called a circuit if $\iota(p) = \tau(p)$. We denote by $\mP_k(G)$ the set of paths of length $k$ in $G$, and by $\mP(G)$ the set of all paths in $G$. Similarly, given $v\in V(G)$, we define $\mP_k(G,v)$ to be the set of all paths of length $k$ whose initial vertex is $v$, and $\mP(G,v)$ to be the set of all paths whose initial vertex is $v$. Similarly, we define $\mathcal{C}_k(G)$, $\mathcal{C}(G)$ to be the sets of circuits of length $k$ in $G$, and the set of circuits in $G$ respectively.

 We will often also think of a path p of length $k$ as continuous function from $[0,1]$ to the geometric realization of $G$. We call such a path \emph{immersed} if the function $p$ is locally injective.

 Given a field $K$  (which we will always take to be $\BR$), let $KG$ be the abstract $K$ vector space spanned by the set $E(G)$. We have isomorphisms $KG \cong K^{E(G)} \cong K^{|E(G)|}$. Choose a positive orientation on each of the edges of $G$ (if $G$ is directed, we simply take the positive direction to be given by the direction in $G$). Let $\wh{\cdot}: \mP(G) \to KG$ given by $\wh{p} = \sum_{e \in E(G)} n_{e}e $ where $n_e$ is the number of times $p$ crosses $e$ in the positive direction minus the number of times it crosses it in the negative direction.

Notice that if we view $\underline{G}$ as a $1$-dimensional simplicial complex, we have a natural identification: $$\mC \cong H_1(\underline{G}, K)$$ \nid and an inclusion $\mC \hookrightarrow KG$.

\subsection{Adjacency graphs, transition matrices and Perron Frobenius matrices}
\nid A matrix $A \in M_n(\BR_{\geq 0})$ is called a \emph{Perron Frobenius Matrix} if $\exists k > 0$ such that $A^k \in M_n(\BR_{> 0})$. The following is a theorem of Perron and Frobenius, which we use extensively.
\begin{theorem}
Let $A$ be a Perron Frobenius matrix. Then $A$ has a real eigenvalue $\lambda > 0$ (called the Perron Frobenius eigenvalue) of multiplicity $1$, such that for any other eigenvalue $\mu$, it's true that $\lambda > |\mu|$. Furthermore, the coordinates of the eigenvalue corresponding to $\lambda$ are all positive.
\end{theorem}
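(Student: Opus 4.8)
The plan is to run the Collatz--Wielandt variational argument and then use the hypothesis $A^k \in M_n(\BR_{>0})$ (primitivity) to turn the resulting inequalities into strict ones. Note first that primitivity implies $A$ is irreducible, hence $(I+A)^{n-1} \in M_n(\BR_{>0})$; this auxiliary positive matrix, which commutes with $A$, will be used repeatedly. On the standard simplex $\Delta = \{x \in \BR^n_{\geq 0} : \sum_i x_i = 1\}$ define $r(x) = \max\{t \geq 0 : Ax \geq tx \text{ coordinatewise}\} = \min\{(Ax)_i/x_i : x_i > 0\}$. The function $r$ is upper semicontinuous on $\Delta$ and continuous on the compact set $(I+A)^{n-1}\Delta$ of strictly positive probability vectors; since $r((I+A)^{n-1}x) \geq r(x)$ always, $\lambda := \sup_\Delta r$ is attained, say at $v$.

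First I would show $Av = \lambda v$. If not, $y := Av - \lambda v$ is a nonzero vector in $\BR^n_{\geq 0}$, so $(I+A)^{n-1}y \in \BR^n_{>0}$; writing $w = (I+A)^{n-1}v$ this reads $Aw - \lambda w > 0$ coordinatewise, so $r(w) > \lambda$, contradicting maximality. Thus $v \geq 0$ is a nonzero $\lambda$-eigenvector; then $(I+A)^{n-1}v = (1+\lambda)^{n-1}v$ has strictly positive left-hand side, forcing $v \in \BR^n_{>0}$, and $\lambda > 0$ because $\lambda^k v = A^k v \in \BR^n_{>0}$. This is the Perron--Frobenius eigenvalue together with the positivity of its eigenvector. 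The bound $\lambda \geq |\mu|$ for any eigenvalue $\mu$ is then immediate: if $Aw = \mu w$ with $w \neq 0$, then $|\mu|\,|w| = |Aw| \leq A|w|$ coordinatewise, so $r$ of the normalization of $|w|$ is at least $|\mu|$.

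Next comes the strict inequality $\lambda > |\mu|$ for $\mu \neq \lambda$, which is the genuinely primitivity-dependent step. Suppose $|\mu| = \lambda$ and $Aw = \mu w$. Applying the maximality argument to $|w|$ as above forces $A|w| = \lambda|w|$, hence $|w| \in \BR^n_{>0}$; therefore $A^k|w| = \lambda^k|w| = |\mu^k|\,|w| = |A^k w|$, so equality holds in each triangle inequality $|(A^kw)_i| \leq \sum_j (A^k)_{ij}|w_j|$ with all coefficients $(A^k)_{ij}$ strictly positive. This forces every coordinate of $w$ to have the same complex argument, i.e. $w = \zeta\,|w|$ for a unit scalar $\zeta$; then $A|w| = \mu|w|$ shows $\mu$ is a nonnegative real, so $\mu = \lambda$ after all. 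Hence $\lambda$ strictly dominates.

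Finally, multiplicity one. Geometrically, two independent real $\lambda$-eigenvectors would have a nonzero real combination with a vanishing coordinate, contradicting the positivity of $\lambda$-eigenvectors (real eigenvectors span the eigenspace since $A$ and $\lambda$ are real). Algebraically, apply the whole construction to $A^T$ (also primitive) to get a strictly positive left eigenvector $u$ with $u^TA = \lambda u^T$; a generalized eigenvector $Az = \lambda z + v$ would then give $\lambda u^Tz = u^TAz = \lambda u^Tz + u^Tv$, so $u^Tv = 0$, impossible since $u, v \in \BR^n_{>0}$. I expect the strict-domination paragraph to be the main obstacle: it is the only place where primitivity is essential (a permutation matrix is irreducible but has many eigenvalues of modulus one), and it combines the Collatz--Wielandt maximality characterization with the equality case of the triangle inequality for the positive matrix $A^k$; by comparison the semicontinuity bookkeeping in the first paragraph is routine.
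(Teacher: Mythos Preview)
Your argument is correct and follows the classical Collatz--Wielandt variational route. The paper, however, does not prove this statement at all: it is quoted in Section~2.2 as a background theorem of Perron and Frobenius and then used freely in the rest of the paper. So there is no ``paper's own proof'' to compare against; you have supplied a self-contained proof where the paper simply cites the literature.

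Two small points of polish. First, $(I+A)^{n-1}\Delta$ does not consist of probability vectors, only of strictly positive vectors; since $r$ is homogeneous of degree zero this is harmless, but the phrasing should be adjusted. Second, in the geometric-multiplicity paragraph you appeal to ``the positivity of $\lambda$-eigenvectors'' as if it were already established for \emph{every} $\lambda$-eigenvector, whereas you only proved it for the particular maximizer $v$. The fix is immediate with the tools you already have: given any real $\lambda$-eigenvector $w$, the combination $w - t^\ast v$ with $t^\ast = \min_i w_i/v_i$ is a nonnegative $\lambda$-eigenvector with a vanishing coordinate, and applying $(I+A)^{n-1}$ forces it to be zero, so $w$ is a scalar multiple of $v$. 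Your algebraic-multiplicity argument via the left eigenvector is clean as written.
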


 \nid Given a graph $G$, let $A(G)$ be its adjacency matrix, that is - if $V(G) = \{v_1, \ldots, v_k \}$, then $A(G)_{i,j}$ is the number of edges connecting $v_i$ to $v_j$. It is a standard fact that for any $k$, $(A(G))^{k}_{i,j}$ is the number of paths in $G$ whose initial vertex is $v_i$ and whose terminal vertex is $v_j$.  We call  a graph \emph{Perron Frobenius} if $A(G)$ is Perron Frobenius. An equivalent way of stating this definition is that $G$ is Perron Frobenius if there is a number $N > 0$ such that for any $k > N$ and for any $v,w \in V(G)$, there exists a path $p$ of length $k$ such that $\iota(p) = v$ and $\tau(p) = w$.

Suppose $G$ is undirected, and $E(G) = \{e_1, \ldots, e_N \}$. To any  continuous map $\phi: G \to G$ that sends paths to paths, we can associate a matrix $T\in M_N(\BR)$, called the \emph{transition matrix of $\phi$} in the following way.  Set $T_{i,j}$ to be the number of times $\phi(e_i)$ passes through $e_j$ (in either direction). The map $\phi$ is called Perron Frobenius if the transition matrix $T$ is Perron Frobenius.
\subsection{Fully irreducible automorphisms and their train track representatives. } Given a graph $G$, and a vertex $v \in V(G)$, we have $\pi_1(\underline{G},v) \cong F$ where $F$ is a free group.  Since $\underline{G}$ is a $K_{F,1}$ space,  any element $\phi \in \textup{Aut}(F)$ is induced by a continuous function $\phi: G \to G$. Furthermore, given an edge $e \in E(G)$, we have that $f(e)$ is a path in $G$. \\

A subgroup $H < F$ is called a \emph{free factor} if there is a subgroup $K < F$ such that $F = H * K$. An element $\phi \in \textup{Aut}(F)$ is called \emph{fully irreducible} if for every $i > 0$, there is no free factor $H$ such that $f(H)$ is conjugate to $H$.

 Given $f \in \aut$, a \emph{train track representative of f} is a triple $(G, \phi, \sim_v)$ of a graph, map, and a collection of equivalence relations with the following properties.
\begin{enumerate}
\item  The graph $G$ satisfies $\pi_1(G,v) \cong F_n$, for any $v \in G$.
\item The map $\phi: G \to G$ is a continuous map that sends paths to paths and induces the same \emph{outer automorphism} of $F_n$ as $f$.
\item The set $\sim = \{\sim_v \}_{v \in V(G)}$ is a collection of equivalence relations on the edges incident at each vertex $v$, such that $\sim_v$ has at least two equivlaence classes for each $v$. A path $p$ in $G$ is called \emph{legal} if no two of its concurrent edges are $\sim$-equivalent. The relationship $~$ satisfies the condition that given any legal path $p$, the path $\phi(p)$ is also legal. In particular, the path $\phi(p)$ is immersed.
\end{enumerate}
 If $f$ has a train track representative $(G,\phi,\sim)$, then by replacing $f$ with $f^K$ for some integer $K$ we can assume that $f$ fixes a vertex $v \in V(G)$. In this case, $f$ and $\phi$ will induce the same automorphism of $\pi_1(G,v) = F_n$. \\

\nid The following Theorem is due to work of Bestvina and Handel \cite{BeH}:

\begin{theorem}
Let $f \in \aut$ be fully irreducible. Then there exists at least one train track representative for $f$. Furthermore, there exists a number $\rho(f) > 1$ called the \emph{dilatation of f} such that for any train track representative $(G,\phi, \sim)$ the transition matrix $T$ of $\phi$ is Perron Frobenius with highest eigenvalue equal to $\rho$.
\end{theorem}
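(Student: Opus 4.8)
The plan is to follow the approach of Bestvina and Handel: manufacture a train track representative by a complexity-reducing algorithm on ordinary topological representatives of the outer class of $f$, and then recognize the leading eigenvalue of its transition matrix as an invariant of $f$ itself, independent of the representative chosen.

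Before constructing any graph map I would fix the candidate for $\rho = \rho(f)$ intrinsically. Pick a free basis $a_1,\dots,a_n$ of $F_n$, write $|g|$ for cyclically reduced word length, and set $\rho = \lim_{k\to\infty}\big(\max_i|f^k(a_i)|\big)^{1/k}$. Submultiplicativity of $k\mapsto\max_i|f^k(a_i)|$ produces the limit, and a change-of-basis comparison shows $\rho$ is unchanged by conjugating $f$, so it is a well-defined invariant of the outer class. I would postpone the strict inequality $\rho>1$ to the very end, where it drops out of the fact that the train track transition matrix of a fully irreducible $f$ on $F_n$, $n\ge 2$, is a primitive integer matrix of size at least $2$ and hence has leading eigenvalue exceeding $1$.

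For the existence assertion, begin with any topological representative $\psi\colon G\to G$, that is, a homotopy equivalence of a finite graph with $\pi_1(G)\cong F_n$ and no valence-$1$ vertices, taking vertices to vertices and edges to tight edge-paths, and repeatedly apply the elementary moves --- subdivision, folding, collapsing an invariant subforest, and deleting valence-$1$ and valence-$2$ vertices --- none of which alters the outer class and none of which increases the Perron--Frobenius eigenvalue $\lambda(\psi)$ of the transition matrix. The structural lemma that drives the algorithm is that if the current representative is \emph{not} a train track map, equivalently if some iterate $\psi^m(e)$ of an edge crosses an illegal turn (a turn whose two directions have the same $\psi$-image direction), then one of the moves strictly decreases the pair $(\lambda,\#\text{edges})$ --- or an equivalent finite secondary complexity --- in lexicographic order. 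Together with the observation that for a bounded number of edges the characteristic polynomials of the transition matrices occurring below a fixed bound are finite in number (their coefficients are bounded symmetric functions of eigenvalues of modulus at most $\lambda$), and with the bookkeeping arranged so the number of edges stays bounded throughout, this forces the algorithm to halt --- necessarily at a train track map. One then takes $\sim_v$ to identify two directions at $v$ exactly when some iterate of the map collapses their initial segments, and, after passing to a power of $f$, one may additionally assume a vertex is fixed, as the paper records. Making this structural lemma precise --- choosing which turns to fold and in what order, and tracking the exact effect of each move on the transition matrix while preserving irreducibility --- is the technical heart of Bestvina--Handel and the step I expect to be the main obstacle.

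Finally the ``furthermore'' clause, which is comparatively routine. Let $(G,\phi,\sim)$ be any train track representative and $T$ its transition matrix; collapsing an invariant subforest if necessary --- which changes neither $\pi_1(G)$ nor $\lambda(T)$ --- we may assume $G$ has none. Since $\phi$ preserves legality, the defining property gives that $\phi^k$ is an immersion on each edge $e$, so the edge-path $\phi^k(e)$ carries no cancellation and has length exactly $\sum_j(T^k)_{ej}$; hence $\max_e|\phi^k(e)|$ grows like $\lambda(T)^k$, and comparing this with the intrinsic growth rate shows $\lambda(T)=\rho$ for every train track representative, which is the claimed equality. A proper $\phi$-invariant subgraph would now have a component of nontrivial fundamental group, i.e. a proper $f$-invariant free factor, contradicting full irreducibility, so $T$ is irreducible; and an imprimitive irreducible $T$ would exhibit a cyclically permuted system of proper invariant subgraphs, which a similar argument rules out for a \emph{fully} irreducible automorphism --- this is the one place the strength of the hypothesis is genuinely used. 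Hence $T$ is primitive, so the Perron--Frobenius theorem applies in the form quoted, with leading eigenvalue $\lambda(T)=\rho$; and since $T$ has size at least $2$ this finally forces $\rho>1$, closing the loop left open above.
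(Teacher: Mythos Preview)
The paper does not prove this statement: it is quoted as a theorem of Bestvina and Handel, cited to \cite{BeH}, and used thereafter as a black box. There is therefore no argument in the paper to compare your proposal against.

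For what it is worth, your outline is exactly the Bestvina--Handel strategy, and you correctly identify the structural lemma --- that a non-train-track representative always admits an elementary move strictly decreasing the complexity $(\lambda,\#\text{edges})$ --- as the substantive content you have not supplied; without it, what you have written is a plan rather than a proof. Your final paragraph, identifying $\lambda(T)$ with the intrinsic growth rate and deducing primitivity of $T$ from full irreducibility of $f$, is essentially right, though the implication ``$T$ imprimitive of period $d$ $\Rightarrow$ $f^d$ has a proper invariant free factor'' deserves one more line: the cyclic edge-classes give $\phi^d$-invariant subgraphs, and one must still check that, after collapsing any $\phi^d$-invariant forest, these carry nontrivial fundamental group.
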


If we let $l$ be the eigenvector of $T$ corresponding to the eigenvalue $\rho$, then its coordinates give a function $l: E(G) \to \BR_+$. We call this length function the \emph{train track length function}. Since $l$ is a $\rho$-eigenvector, it follows that if $p \in \mP(G)$ then $l\big(\phi(p)\big) = \rho \cdot l\big( p\big)$.

\subsection{Bounded cancellation In free groups} In the proof of Theorems \ref{shadowshape} and \ref{darkshadow}, we will require the following theorem of Thurston (c.f. \cite{coop}).
\begin{theorem} \label{thurston} Let $B$ be a finite generating set for $F_n$, and let $|\cdot|$ be the word length function given by the generating set $B$. The for every $\phi \in\aut$, there exists a number $C > 0$, such that for every $\alpha, \beta \in F_n$ it's true that:
$$|\phi(\alpha)| + |\phi(\beta)| - C \leq |\phi(\alpha\beta)|  $$
\end{theorem}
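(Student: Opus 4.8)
The plan is to prove this as the classical \emph{bounded cancellation lemma} of Cooper, by recasting it as a statement about quasigeodesics in the Cayley tree. The first move is a reformulation: the inequality is equivalent to the assertion that when the reduced word representing $\phi(\alpha\beta)$ is produced from the concatenation $\phi(\alpha)\cdot\phi(\beta)$, only a bounded number of letters (depending on $\phi$ and $B$ alone) cancel at the junction — for if at most $C_0$ letters are lost then $|\phi(\alpha\beta)|\ge|\phi(\alpha)|+|\phi(\beta)|-2C_0$, so $C=2C_0$ suffices. The second preliminary move is to replace $B$ by a free basis: any two word metrics on $F_n$ are bi-Lipschitz equivalent and a bi-Lipschitz change of metric only rescales $C$, so I may assume $B=\{a_1,\dots,a_n\}$, in which case the Cayley graph $T$ of $F_n$ with respect to $B$ is a simplicial tree on which $F_n$ acts by left translation with $d(g,h)=|g^{-1}h|$, and $\phi$ extends to a map $T\to T$ which is a quasi-isometry because $\phi$ and $\phi^{-1}$ are both Lipschitz for $|\cdot|$.

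The key step is the claim that for every reduced word $w=w_1w_2\cdots w_m$ (each $w_i\in B^{\pm1}$) the ``partial-product path'' $\gamma(i):=\phi(w_1\cdots w_i)$, $0\le i\le m$, is a quasigeodesic from $e$ to $\phi(w)$ with constants depending only on $\phi$. Setting $K:=\max_i|\phi(a_i^{\pm1})|$ and $K':=\max_i|\phi^{-1}(a_i^{\pm1})|$, the upper bound on the step length is immediate: $d(\gamma(i),\gamma(i{+}1))=|\phi(w_{i+1})|\le K$. The lower ``no shortcuts'' bound is where the reducedness of $w$ enters: for $i<j$ the subword $w_{i+1}\cdots w_j$ is reduced of length $j-i$, so $d(\gamma(i),\gamma(j))=|\phi(w_{i+1}\cdots w_j)|\ge|w_{i+1}\cdots w_j|/K'=(j-i)/K'$, using $|x|=|\phi^{-1}(\phi(x))|\le K'|\phi(x)|$. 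Thus $\gamma$ is a quasigeodesic with constants depending only on $K$ and $K'$ (interpolating linearly over each step makes it an honest quasigeodesic if one prefers).

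To conclude, I would invoke stability of quasigeodesics in a tree (the Morse lemma, which in a tree is elementary): there is $D=D(K,K')$ with $\gamma$ contained in the $D$-neighborhood of the geodesic $[e,\phi(w)]$. Applying this to $w=\alpha\beta$ with index $j=|\alpha|$, so that the vertex $\gamma(j)=\phi(\alpha)$ lies on this path, gives $d(\phi(\alpha),[e,\phi(\alpha\beta)])\le D$. Now use the tree identity $d(e,z)=d(e,y)+d(y,z)-2\,d(y,[e,z])$, valid for all vertices $e,y,z$: with $y=\phi(\alpha)$ and $z=\phi(\alpha\beta)=\phi(\alpha)\phi(\beta)$, and with $d(e,\phi(\alpha))=|\phi(\alpha)|$, $d(\phi(\alpha),\phi(\alpha)\phi(\beta))=|\phi(\beta)|$, this yields $|\phi(\alpha\beta)|\ge|\phi(\alpha)|+|\phi(\beta)|-2D$, so the theorem holds with $C=2D$.

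I expect the main obstacle to be the stability step — controlling how far the partial-product path can wander from the geodesic between its endpoints. In a tree this is a short pigeonhole argument, but it must be set up carefully: an excursion of $\gamma$ reaching depth $R$ away from $[e,\phi(w)]$ must leave and re-enter a single component of the complement of that segment, so its two endpoints on the segment lie within $\sim K$ of the branch point, hence within $\sim 2K$ of each other; the ``no shortcuts'' bound then caps the number of steps in the excursion at $\sim 2KK'$, while reaching depth $R$ and returning costs at least $\sim 2R/K$ steps, forcing $R\lesssim K^2K'$. Everything else — the bi-Lipschitz reduction, the passage between cancellation length and length defect, and the tree identity — is routine, and I would only need to pin down the constants.
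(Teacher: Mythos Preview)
The paper does not give a proof of this statement at all: it is quoted as background (the bounded cancellation lemma, attributed to Thurston with a reference to Cooper \cite{coop}) and then invoked later in Lemma~\ref{conjugation} and in \S3.2.4. Your quasigeodesic/Morse--lemma argument is the standard proof and is sound, so in that sense you are supplying what the paper only cites.

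One genuine issue, however, affects both the statement you were given and the step where you write ``applying this to $w=\alpha\beta$ with index $j=|\alpha|$'': as literally stated the inequality is false---take $\beta=\alpha^{-1}$, so $|\phi(\alpha\beta)|=0$ while $|\phi(\alpha)|+|\phi(\beta)|=2|\phi(\alpha)|$ is unbounded. The intended (and standard) hypothesis, which the paper does impose every time it applies the lemma, is that the product $\alpha\beta$ be reduced, i.e.\ $|\alpha\beta|=|\alpha|+|\beta|$. Your argument uses this implicitly: only under that hypothesis is $\alpha$ the length-$|\alpha|$ prefix of the reduced word for $\alpha\beta$, so that $\gamma(|\alpha|)=\phi(\alpha)$ actually lies on your quasigeodesic. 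With that hypothesis added, everything you wrote goes through; you should flag it explicitly rather than leave it tacit.
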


\subsection{The Hausdorff Metric} Recall that given two compact sets $X, Y \subset \BR^N$, the \emph{Hausdorff distance} between $X$ and $Y$ is defined as:
$$d_H(X,Y) = \max \{\sup_{x \in X} d(x,Y), \sup_{y \in Y} d(y,X) \}$$
\nid where $d$ is the Euclidean metric. A related notation we will often use, given a compact set $X$ and a real number $R > 0$  is:
$$B_R(X) = \{x \in \BR^N \mid d(x,X) < R \} $$

\subsection{Weak-* convergence of measures} We say that a sequence of probability measures $\{\mu_k \}_{k =1}^{\infty}$ on $\BR^n$ weak-* converge to the measure $\mu$ if for every bounded, continuous function $h: \BR^n \to \BR$ it's true that:
$$\lim_{k \to \infty} \int_{\BR^n} h \mu_k = \int_{\BR^n} h \mu $$
One convenient criterion that we will use for weak-* convergence is the following.
\begin{lemma} Let  $\{\mu_k \}_{k =1}^{\infty}$ be a sequence of probability measures on $\BR^n$, and let $\mu$ be a probability measure supported at the point $Q \in \BR^n$. Suppose that for every closed ball $B \subset \BR^n$ it's true that $\lim_{k \to \infty} \mu_k \big[ B\big] = \mu[B]$. Then the measures $\mu_k$ weak-* converge to $\mu$.
\end{lemma}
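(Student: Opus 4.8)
The plan is to reduce the conclusion to a standard mass-concentration estimate, using only the hypothesis on closed balls centered at $Q$. Fix a bounded continuous function $h:\BR^n\to\BR$ and set $M=\sup_{x\in\BR^n}|h(x)|<\infty$. Since $\mu$ is the point mass at $Q$, we have $\int h\,d\mu=h(Q)$, so it suffices to prove $\int h\,d\mu_k\to h(Q)$.

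First I would invoke continuity of $h$ at $Q$: given $\varepsilon>0$, choose $\delta>0$ so that $|h(x)-h(Q)|<\varepsilon$ for every $x$ in the closed ball $B=\{x\in\BR^n:|x-Q|\le\delta\}$. This $B$ contains $Q$, so $\mu[B]=1$; hence by hypothesis $\mu_k[B]\to 1$, equivalently $\mu_k[\BR^n\setminus B]\to 0$. (Here I use that each $\mu_k$ is a probability measure, so $\mu_k[B]+\mu_k[\BR^n\setminus B]=1$.)

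Next I would split the integral and estimate, for every $k$:
$$\left|\int h\,d\mu_k-h(Q)\right|=\left|\int\big(h-h(Q)\big)\,d\mu_k\right|\le\int_B|h-h(Q)|\,d\mu_k+\int_{\BR^n\setminus B}|h-h(Q)|\,d\mu_k\le\varepsilon\,\mu_k[B]+2M\,\mu_k[\BR^n\setminus B].$$
The first term is at most $\varepsilon$ and the second tends to $0$ as $k\to\infty$, so $\limsup_k\big|\int h\,d\mu_k-h(Q)\big|\le\varepsilon$. Letting $\varepsilon\to 0$ gives $\int h\,d\mu_k\to h(Q)=\int h\,d\mu$, which is the definition of weak-$*$ convergence.

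I do not expect a real obstacle here; the one point worth care is that the hypothesis gives information only about closed balls rather than arbitrary Borel sets, but the balls centered at $Q$ already control the escape of mass away from $Q$, which is exactly what the argument needs. An alternative would be to cite the portmanteau theorem, but the direct estimate above is shorter and self-contained.
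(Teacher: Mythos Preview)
Your proof is correct and follows essentially the same approach as the paper: use continuity of $h$ at $Q$ to pick a small closed ball, use the hypothesis to show the $\mu_k$-mass of its complement tends to zero, and split $\int(h-h(Q))\,d\mu_k$ over the ball and its complement. Your estimate is in fact a bit cleaner than the paper's, which carries out the same split with slightly less care in the final inequality.
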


\begin{proof}
Let $h: \BR^n \to \BR$ be a bounded continuous function. Let $M$ be an upper bound for $|h|$. Fix $\epsilon > 0$. Since $h$ is continuous, there exists a $\delta$ such that $|h(x) - h(y)| < \epsilon$ for all $x,y \in \overline{B_\delta(Q)}$. For all sufficiently large values of $k$ we have that $\mu_k(\overline{B_\delta(Q)}^{c})  < \frac{\epsilon}{M}$. For these values of $k$:

$$|\int_{\BR^n} h \mu_k - h(Q)| \leq | \int_{\overline{B_\delta(Q)}} h \mu_k - h(Q)| + |\int_{\overline{B_\delta(Q)}^{c}} h \mu_k|  \leq (1 - \epsilon)\epsilon + \epsilon $$
and the result follows.
\end{proof}

\section{Proofs of main Theorems}
\subsection{Proof of Theorem \ref{shadowshape}}

\subsubsection{Reduction to the case $f_{ab} = I_n$} During the course of the proof, we will often replace $f$ with some power of itself. To do this, we require the following Lemma.

\begin{lemma} \label{powers1} If $f\ \in \aut$ is fully irreducible such that $f_{ab}$ has finite order, and the conclusion of Theorem \ref{shadowshape} holds for $f^L$ for some  $L > 0$, then it holds  for $f$ as well.
\end{lemma}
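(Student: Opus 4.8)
The plan is to set $\shd_\infty f := \tfrac{1}{L}\,\shd_\infty f^L$; since multiplying a convex polytope with rational vertices by $\tfrac1L$ again yields a convex polytope with rational vertices, the convexity and rationality are immediate, and the actual work is to verify the convergence statement and the naturality statement for this choice. Fix $x\in F_n$ with infinite $f$-orbit. For each $k$ write $k = Lm + r$ with $0\le r < L$ and put $x_r = f^r(x)$. Since $x$ has infinite $f$-orbit, so does $x_r = f^r(x)$, and hence $x_r$ has infinite $f^L$-orbit as well (otherwise some power of $f^L$ would fix $x_r$, forcing the $f$-orbit of $x_r$ to be finite). Because $f^k(x) = f^{Lm}\big(f^r(x)\big) = (f^L)^m(x_r)$, and because $\shd_j p = \tfrac{1}{j}\,\shd p$ as subsets of $\BR^n$, we obtain
$$\shd_k f^k(x) \;=\; \tfrac{1}{k}\,\shd\,(f^L)^m(x_r) \;=\; \tfrac{m}{Lm+r}\;\shd_m\,(f^L)^m(x_r).$$

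By the hypothesis applied to $x_r$, we have $\shd_m (f^L)^m(x_r)\to \shd_\infty f^L$ in the Hausdorff metric as $m\to\infty$, while $\tfrac{m}{Lm+r}\to\tfrac{1}{L}$. I will then invoke the elementary fact that if compact sets $A_m$ converge to a bounded set $A$ in the Hausdorff metric and positive scalars $c_m$ converge to $c$, then $c_m A_m \to cA$; this follows from the estimate $d_H(c_m A_m, cA)\le c_m\, d_H(A_m,A) + |c_m - c|\,\sup_{a\in A}|a|$, where the supremum is finite because $\shd_\infty f^L$ is a polytope. Hence $\shd_k f^k(x)\to \tfrac{1}{L}\,\shd_\infty f^L = \shd_\infty f$ as $k\to\infty$ along each fixed residue class $r$ modulo $L$, and since the $L$ residue classes exhaust the integers this gives $\shd_k f^k(x)\to\shd_\infty f$ as $k\to\infty$ without restriction, which is the required convergence.

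For the naturality statement, let $\phi\in\aut$ and $g = f^\phi$; then $g$ is again fully irreducible with $g_{ab}$ of finite order, and $g^L = (f^\phi)^L = (f^L)^\phi$. The assumption that the conclusion of Theorem~\ref{shadowshape} holds for $f^L$ includes its naturality clause, so $\shd_\infty g^L = \shd_\infty (f^L)^\phi = \phi_{ab}\,\shd_\infty f^L$; in particular the conclusion of Theorem~\ref{shadowshape} holds for $g^L$, so applying the convergence argument of the previous paragraph to $g$ in place of $f$ yields $\shd_\infty g = \tfrac{1}{L}\,\shd_\infty g^L = \tfrac{1}{L}\,\phi_{ab}\,\shd_\infty f^L = \phi_{ab}\big(\tfrac{1}{L}\,\shd_\infty f^L\big) = \phi_{ab}\,\shd_\infty f$, using that $\phi_{ab}$ is linear. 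I do not expect a genuine obstacle in this lemma: essentially all of the content is the rescaling bookkeeping, namely that the factor $\tfrac{1}{k}$ defining $\shd_k$ versus the factor $\tfrac{1}{m}$ defining $\shd_m$ introduces the convergent ratio $\tfrac{m}{Lm+r}\to\tfrac{1}{L}$, and hence the $\tfrac1L$-scaling of the limiting polytope, together with the stability of Hausdorff convergence of bounded sets under multiplication by a convergent sequence of positive scalars.
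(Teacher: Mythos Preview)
Your argument is correct and follows essentially the same route as the paper: write $k=Lm+r$, apply the hypothesis for $f^L$ to each of the finitely many starting points $f^r(x)$, and use the scaling relation $\shd_k=\tfrac{m}{k}\shd_m$ together with $\tfrac{m}{k}\to\tfrac{1}{L}$. Your version is in fact more careful than the paper's, which contains a typo (it writes the scaling factor as $\tfrac{P}{q}\to L$ rather than $\tfrac{q}{P}\to\tfrac{1}{L}$) and omits the verification of the naturality clause that you supply.
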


\begin{proof}
Choose $x \in F_n$ for which the orbit of $x$ under $f$ is infinite. Then the orbit of $x$ under $f^L$ is infinite, as is the orbit of $f^r x$ under the action of $f^L$, for any $r \geq 1$. Write $g = f^L$. Given any integer $P$, we have that $P = qL + r$ for some $q,r \in \BN$, and

 $$\shd_P f^P x = \shd_P g^q \big(f^r x \big) = \frac{P}{q} \shd_q g^q f^r(x) $$

 The number $r$ can take on the values $0, \ldots, L -1$. For each particular value of $r$ it's true that:
 $$\lim_{q \to \infty} \shd_q g^q \big(f^r(x) \big) = \shd_{\infty} g $$

\nid Furthermore, $\lim_{P \to \infty} \frac{P}{q} = L$. Thus, it follows that:

 $$\lim_{P \to \infty} \shd_P f^P(x) = L \cdot \shd_{\infty} g$$

\end{proof}

\nid One immediate consequence of this Lemma is that we can (and will) assume henceforth that $f_{ab} = I_n$. Furthermore, whenever we have a graph $G$, and a map $\phi:G \to G$ that realized the same outer automorphism as $f$, we will always assume $\phi$ fixes a vertex of $G$ by replacing $f$ with a power of itself.

\subsubsection{Controlling the action on $\BR G$ for a general graph $G$}

\nid Let $G$ be a graph with $N$ edges, and $v_0 \in V(G)$ such that $\pi_1(G,v_0) \cong F_n$.  Let $\phi: (G,v_0) \to (G,v_0)$ be a continuous function that sends edges to paths, such that $\phi$ induces the automorphism $f$ on $F_n$. Choose a preferred orientation on each of the edges of $G$.

Let $\phi_{ab}$ be the action induced by $\phi$ on $\BR G$. Note that the fact that $f_{ab} = I_n$ does not imply that $\phi_{ab} = I_N$. However, the action of $\phi_{ab}$ on $\wh{\mP{G}}$ is relatively simple.  Indeed, for every $v \in V(G)$, choose a path $p_v \in \mP(G)$ that connects $v$ to $v_{0}$. Given any path $p$, let $\overline{p}$ be the path obtained by traversing $p$ backwards from $\tau(p)$ to $\iota(p)$. Given a path $p \in \mP(G)$ such that $\iota(p) = v$ and $\tau(p) = w$, we have that:
$$\phi_{ab} \wh{p} = \phi_{ab}\wh{p}  - \phi_{ab}\wh{p_v} + \phi_{ab}\wh{p_w} + \phi_{ab}\wh{p_v}  - \phi_{ab}\wh{p_w} = \phi_{ab}(\wh{\overline{p_v}p p_w}) - \phi_{ab}\wh{p_v}  + \phi_{ab}\wh{p_w} $$

Since $\overline{p_v} p p_w$ is a circuit, $\phi_{ab}$ acts trivially on it. Thus, setting $u = \wh{p_w} - \wh{p_v}$ we get:

$$\phi_{ab} \wh{p} =  \wh{p} + (u - \phi_{ab} u) $$

\nid Setting $R = \phi_{ab} - I_N$ we can write $\phi_{ab} \wh{p} = \wh{p} + R(\wh{p})$. The above argument shows that $R(\wh{p})$ depends only on the initial and terminal vertices of $p$, and thus can take on only finitely many values.

\subsubsection{Half-point sets of shadows and their iterates} Fix a graph $G$ as above. Call a point $y \in \BR^N$ \emph{path accessible} if there exists a path $p \in \mP(G)$ such that $y \in \shd(p)$. Let $\mathcal{A} \subset \BR^N$ be the set of path accessible points.  Let $$\mathcal{H} = \{x \in \frac{1}{2} \BZ^N | \textup{ exactly one coordinate of } x \textup{ is not an integer } \} \cap \mathcal{A}$$

\nid Given a path $p \in \mP(G)$, define the \emph{half point set of $p$}, or $HP(p)$,  to be the set $\shd(p) \cap \mathcal{H}$. The set $HP(p)$ has the property that $d_H (HP(p), \shd(p)) \leq \frac{1}{2}$. We will study the sets $HP(\phi^k(p))$. These are relatively simple to study because of the following lemma, that shows that given a path $p$, the set $HP(\phi(p))$ can be determined solely  by considering $HP(p)$.

\begin{lemma} \label{Locality}  There exists a function $L: \mathcal{H} \to 2^\mathcal{H}$ such that for any path $p \in \mP(G)$:
$$HP(\phi (p)) = \bigcup_{x \in HP(p)} L(x) $$
\end{lemma}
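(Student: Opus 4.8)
The plan is to construct the local rule $L$ by analyzing how the map $\oshd$ interacts with $\phi$ at the level of a single edge, and then glue these local pieces together along a path. First I would record the structure of $\shd(\phi(p))$ in terms of $\shd(p)$. Writing $p = e_{i_1} e_{i_2} \cdots e_{i_k}$ as a concatenation of (oriented traversals of) edges, the parametrized shadow $\oshd(\phi(p))$ is the concatenation of the translated copies of $\oshd(\phi(e_{i_j}))$, where the translation applied to the $j$-th block is exactly $\oshd(\phi(e_{i_1} \cdots e_{i_{j-1}})) = \oshd(\phi(p_j))$ for the prefix path $p_j$. The key point, already established in the preceding subsection, is that $\phi_{ab}\wh{q} = \wh{q} + R(\wh{q})$ where $R(\wh{q})$ depends only on $\iota(q)$ and $\tau(q)$; since $\wh{p_j}$ is the endpoint $\oshd(p_j)$ of the prefix shadow and $p_j$ starts at $v_0$, the translation vector $\oshd(\phi(p_j)) = \wh{p_j} + R(\wh{p_j})$ is determined by $\oshd(p_j) = \wh{p_j}$ together with the terminal vertex $\tau(p_j) = \iota(e_{i_j})$, which is itself determined by which half-integer points of $\mathcal{H}$ lie just before the $j$-th edge.

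The second step is to see that a point $x \in HP(p)$ records enough combinatorial data to reconstruct, first, the edge of $G$ that $p$ is traversing at the parameter where it passes through the half-integer point (this is exactly the unique non-integer coordinate of $x$, together with its direction: whether that coordinate is increasing or decreasing as $t$ increases), and second, the exact translated block of $\oshd(\phi(p))$ contributed by that edge-traversal. Concretely: if $x$ has non-integer coordinate in position $i$, then $p$ is crossing edge $e_i$; the integer part $\lfloor x \rfloor \in \BZ^N$ is $\oshd(p_j)$ or $\oshd(p_j) + (\text{sign})e_i$ depending on the direction, hence $\tau(p_j)$ is pinned down among the finitely many vertices, hence the translation vector $\oshd(\phi(p_j))$ is pinned down. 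Therefore I would define $L(x)$ to be the set of all half-integer points obtained by taking the (finitely many) half-integer points of the shadow $\shd(\phi(e_i))$ — traversed in the appropriate direction — and translating them by $\oshd(\phi(p_j)) = \lfloor x \rfloor$-plus-correction; this is a well-defined subset of $\mathcal{H}$ depending only on $x$, and it is independent of $p$.

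Finally I would verify the claimed identity $HP(\phi(p)) = \bigcup_{x \in HP(p)} L(x)$ by a double inclusion. For ``$\supseteq$'': each $x \in HP(p)$ arises from some edge-traversal of $p$, and $L(x)$ is by construction contained in the translated block of $\oshd(\phi(p))$ coming from that traversal, hence in $\shd(\phi(p))$; and $L(x) \subset \mathcal{H}$ by definition, so $L(x) \subset HP(\phi(p))$. For ``$\subseteq$'': any $y \in HP(\phi(p))$ lies in some translated block $\oshd(\phi(p_j)) + \oshd(\phi(e_{i_j}))$, and since $y$ has a non-integer coordinate it is not at the boundary between two blocks, so it lies in a well-determined block; the half-integer points of $p$ straddling the $j$-th edge detect that block, and $y$ appears in the corresponding $L(x)$. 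The main obstacle I expect is the bookkeeping at block boundaries — ensuring that the translation vector read off from $x$ is genuinely $\oshd(\phi(p_j))$ and not off by the contribution of the $j$-th edge itself, and handling carefully the two cases according to the orientation in which $p$ crosses $e_i$ (which also affects whether one uses $\oshd(\phi(e_i))$ or its reverse). Once the direction conventions are fixed consistently, the argument is purely combinatorial and finite, since $R$ takes finitely many values and each $\shd(\phi(e_i))$ contains finitely many half-integer points.
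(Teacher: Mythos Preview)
Your proposal is correct and follows essentially the same approach as the paper: decompose $p$ into edges, use the concatenation formula $\shd(p_1 p_2) = \shd p_1 \cup (\shd p_2 + \phi_{ab}\wh{p_1})$, define $L(x)$ as the half-points of $\shd(\phi(p_{e(x)}))$ translated by $\phi_{ab}\lfloor x\rfloor$, and then verify by induction on the length of $p$, splitting into the two orientation cases. The orientation bookkeeping you flag is exactly what the paper handles in its Case~2, where the identity $\shd\,\overline{\phi(p_{e(x)})} + \phi_{ab}\wh{p_{e(x)}} = \shd\,\phi(p_{e(x)})$ shows that the \emph{same} $L(x)$ works for both directions of traversal, so you will not in fact need to record the sign of the crossing in $L$.
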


\begin{proof}
\nid Each element $x \in \mathcal{H}$ has a preferred coordinate - the nonintegral one. This coordinate corresponds to an element in $E(G)$ which we denote $e(x)$. If $x \in HP(p)$, then $p$ passes through the edge $e(x)$ at least once, either in the positive or negative direction.

Given an edge $e \in E(G)$, let $p_e$ be the path that traverses $e$ in the positive direction.  Let $x \in \mathcal{H}$.  Define $\lfloor x \rfloor = x - \frac{1}{2} \wh{p_{e(x)}}$, and set:

$$L(x) = HP(\phi(p_{e(x)}))  +  \phi_{ab} \lfloor x \rfloor $$

\nid where the above expression is taken to mean the translation of the set $HP(\phi(p_{e(x)})) $ by the vector $\phi_{ab} \lfloor x \rfloor$.  Notice that if $p$, and $p = p_1 p_2$ then $$\shd p = \shd p_1 \bigcup \big(\shd p_2 + \phi_{ab} \wh{p_1} \big)$$

\nid  Fix $x \in \mathcal{H}$. We consider two cases. \\

\nid \emph{Case 1:} Let $p$ be a path of the form $p = p_1 p_{e(x)}$, such that the last segment of $\oshd p$ passes through $x$  then

$$\shd \phi(p) = \shd(p_1) \bigcup \big(\shd{\phi(p_{e(x)})} + \phi_{ab}(\wh{p_1}) \big)   = \shd(p_1) \bigcup \big(\shd{\phi(p_{e(x)})} + \phi_{ab}  \lfloor x \rfloor\big)$$

\nid So:
$$HP(p) = HP(p_1) \bigcup L\big(x \big) $$

\nid \emph{Case 2:} Now suppose  $p$ is a path of the form  $p = p_1 \overline{p_{e(x)}}$, such that the last edge of $\oshd p$ passes through $x$  then

$$\shd \phi(p) = \shd p_1 \bigcup \big(\shd{\phi(\overline{p_{e(x)}})} + \phi_{ab}(\wh{p_1}) \big) =  \shd p_1 \cup \big(\shd \overline{\phi(p_{e(x)})} + \phi_{ab}(\wh{p_1}) \big)$$

\nid Now notice that $\wh{p_1} = \lfloor x \rfloor + \wh{p_e}$.  Thus

$$\shd \phi(p) =  \shd p_1 \bigcup \big(\shd \overline{\phi(p_{e(x)})} + \phi_{ab}(\wh{p_e})  + \phi_{ab} \lfloor x\rfloor \big)$$

\nid Furthermore, we have that  $\shd  \overline{\phi(p_{e(x)})} + \phi_{ab}(\wh{p_e}) = \shd \phi (p_{e(x)})$. Indeed, as paths in $\BR^N$:  $$\oshd \overline{\phi(p)} + \phi_{ab}(\wh{p_e}) = \overline{\oshd \phi(p)}$$

\nid Both paths traverse the same edges in reverse order. Thus, once again in this case:
$$HP(\phi(p)) = HP(p_1) \bigcup L(x) $$

\nid The result now follows from induction on the length of the path $p$.

\end{proof}

\begin{lemma} \label{notdepend} For any $x_1, x_2 \in \mathcal{H}$ it's true that: $L(x_1) - x_1 = L(x_2) - x_2$.
\end{lemma}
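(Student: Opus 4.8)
The plan is to prove the identity by a direct computation with the explicit formula for $L$ produced in the proof of Lemma~\ref{Locality}, separating off the part of $L(x)-x$ that does not depend on $x$. Substituting $\lfloor x\rfloor=x-\tfrac12\wh{p_{e(x)}}$ into $L(x)=HP(\phi(p_{e(x)}))+\phi_{ab}\lfloor x\rfloor$ gives
$$L(x)-x \;=\; HP(\phi(p_{e(x)}))\;-\;\tfrac12\wh{p_{e(x)}}\;+\;(\phi_{ab}-\Id)\lfloor x\rfloor.$$
The first two summands depend only on the edge $e(x)$, so the whole point is to control the last one. Here I would invoke the analysis of the operator $\phi_{ab}-\Id$ carried out above: since $f_{ab}=I_n$, this operator annihilates every circuit, hence $(\phi_{ab}-\Id)(\wh{p})$ depends only on $\iota(p)$ and $\tau(p)$. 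As $x$ is path accessible, $\lfloor x\rfloor=\wh{q}$ for a path $q$ ending at $\iota(e(x))$, and using $\phi_{ab}\wh{c}=\wh{c}$ for circuits $c$ one checks immediately that $L(x+\wh{c})-(x+\wh{c})=L(x)-x$; thus $L(x)-x$ depends only on the class of $x$ in $\BR^N/H_1(\underline{G},\BZ)$, and — for the half-points that actually occur, which lie on paths based at $v_0$ — only on $e(x)$. This reduces the lemma to the assertion that $HP(\phi(p_e))-\tfrac12\wh{p_e}$ is independent of the edge $e$, i.e. that $L(m_e)-m_e=L(m_{e'})-m_{e'}$ where $m_e=\tfrac12\wh{p_e}$ is the half-point lying on $e$.

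To establish this edge-independence I would run Lemma~\ref{Locality} backwards. Fix edges $e\ne e'$ and a legal path $p$ that traverses $e$ and, later, $e'$; let $x_e,x_{e'}\in HP(p)$ be the two half-points these traversals contribute. Lemma~\ref{Locality} gives $L(x_e),L(x_{e'})\subseteq HP(\phi(p))$, while expanding $\phi(p)$ as the concatenation of the $\phi$-images of the edges of $p$ exhibits explicitly the translation vectors that move $HP(\phi(p_e))$ and $HP(\phi(p_{e'}))$ into their positions inside $HP(\phi(p))$; these translation vectors differ from $\phi_{ab}$ of a path vector by a $\phi_{ab}$-fixed vector, and matching the two descriptions pins down $L(x_e)-x_e=L(x_{e'})-x_{e'}$. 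Together with the first paragraph this gives the statement for all $x_1,x_2\in\mathcal H$.

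The step I expect to be the main obstacle is precisely this comparison across distinct edges: it is the only place where full irreducibility has to be used essentially. For a general $\phi$ realizing a reducible automorphism with $f_{ab}=I_n$ the conclusion is false — for $F_2=\langle a,b\rangle$ with $\phi(a)=bab^{-1}$ and $\phi(b)=b$ one already has $HP(\phi(p_a))-\tfrac12\wh{p_a}\ne HP(\phi(p_b))-\tfrac12\wh{p_b}$ — so the argument must genuinely use the train-track structure: legality of $p$, so that $\phi(p)$ is immersed and its half-point set is not artificially collapsed by backtracking. Keeping the bookkeeping of the translation vectors under control along the threading path $p$ is the delicate point.
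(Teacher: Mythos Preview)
Your first paragraph is exactly the paper's argument: expand $L(x)-x$ as $HP(\phi(p_{e(x)}))-\tfrac12\wh{p_{e(x)}}+(\phi_{ab}-\Id)\lfloor x\rfloor$, observe the first two summands depend only on $e(x)$, and use the earlier computation $(\phi_{ab}-\Id)\wh{q}=\wh{p_{\tau(q)}}-\wh{p_{\iota(q)}}-\phi_{ab}(\wh{p_{\tau(q)}}-\wh{p_{\iota(q)}})$ to see that the third summand is determined by the endpoints of a path realising $\lfloor x\rfloor$, hence by $e(x)$. That is the whole proof in the paper, and the very next sentence after the lemma draws precisely this conclusion: ``there is a function $L'\colon E(G)\to 2^{\BR^N}$ such that $L(x)-x=L'(e(x))$.''

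The problem is that you then read the lemma literally and try to prove the stronger claim that $L(x)-x$ is independent of $e(x)$ as well. That stronger claim is simply false, and it is false in the fully irreducible case too: in the paper's own worked example $\phi(a)=ababa^{-1}$, $\phi(b)=bab^{-1}a^{-1}b$ one computes $HP(\phi(p_a))-\tfrac12\wh{p_a}$ and $HP(\phi(p_b))-\tfrac12\wh{p_b}$ and they are visibly different sets (they do not even have the same cardinality once $(\phi_{ab}-\Id)$-corrections are included, since $|L'(a)|=5$ while $L'(b)$ has repeated points). Your reducible ``counterexample'' is not a counterexample to the lemma as the paper intends it; it is a counterexample to your overreading of it. Full irreducibility and the train-track structure play no role whatsoever in this lemma --- the statement and proof use only that $f_{ab}=\Id$, and everything in your second and third paragraphs should be deleted. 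The lemma's wording is admittedly sloppy (it should say ``for any $x_1,x_2\in\mathcal H$ with $e(x_1)=e(x_2)$''), but the proof and the consequence drawn from it leave no doubt about the intended content.
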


\begin{proof}
 If $x \in \mathcal{H}$, there is a path $p_x \in \mP(G,v_0)$ such that $\lfloor x \rfloor \in \shd (p)$. Thus, $\phi_{ab} \lfloor x \rfloor = \lfloor x \rfloor + R(\lfloor x \rfloor)$, where $R(\lfloor x \rfloor)$ is depends only on the terminal vertex of $p_x$, and thus it is a function of $e(x)$. Thus, the set $L(x) - x$ depends only on $e(x)$.
\end{proof}

By Lemma \ref{notdepend}, there is a function $L': E(G) \to 2^{\BR^N}$ such that for any $x \in \mathcal{H}$, it's true that $L(x) - x = L' (e(x))$.

Now consider the following directed graph, which we call the \emph{half point graph of $\phi$} or $HP_\phi$. For every edge $e \in E(G)$, there is a vertex $v_e \in V(HP_\phi)$. Given an edges $d,e \in E(G)$, connect the vertex $v_d$ to $v_e$ with $m$ edges emanating from $v_d$, where $m$ is the number of times $\phi(d)$ passes through the edge $e$ (in either direction).

Let $\eta \in E(HP_{\phi})$. The edge $\eta$ associated to a segment in $\phi(d)$, for some $d \in E(G)$, which is in turn associated to a segment in $\shd \phi(d)$. Let $y_\eta$ be the half-point of this segment and let $x_{\eta} = \frac{1}{2} \wh{d}$. Define $H(\eta) = y_{\eta} - x_{\eta}$. Extend $H$ linearly to a function $H: \BR HP_{\phi} \to \BR^N$. \\

\nid \textbf{Example.} Let $G = \mathcal{R}_2$. Label the edges of $G$: $a$ and $b$. Let $\phi: G \to G$ be given by $\phi(a) = ababa^{-1}$, and $\phi(b) = bab^{-1}a^{-1}b$. The figures below show the parametrized shadows $\oshd \phi(a)$, $\oshd \phi(b)$.

$$ \includegraphics[width=30mm]{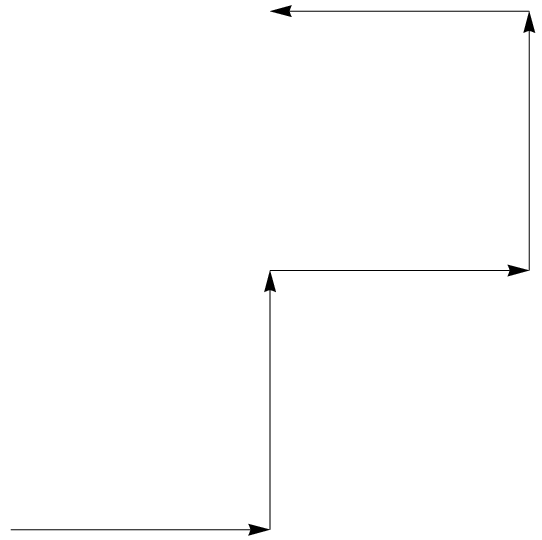} \includegraphics[width=20mm]{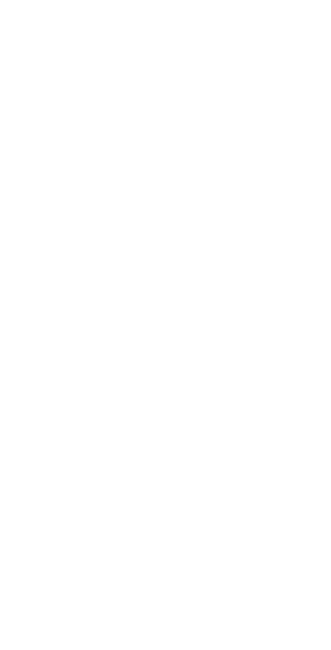}  \includegraphics[width=30mm]{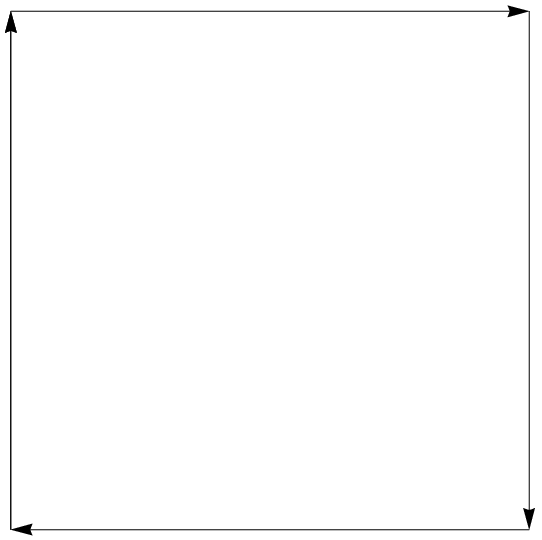} $$

The graph $HP_\phi$ has two vertices: $v_a, v_b$. The path $\phi(a)$ has length five. Three of its segments pass through the edge $a$ and two through $b$. Thus, there will be three edges from $v_a$ to $v_a$, whose $H$ values are: $\left(\begin{array}{c} 0 \\ 0 \end{array} \right), \left(\begin{array}{c} 1 \\ 1 \end{array} \right), \left(\begin{array}{c} 1 \\ 2 \end{array} \right)$ and two from $v_a$ to $v_b$ whose $H$ values are $\left(\begin{array}{c} \frac{1}{2} \\ \frac{1}{2} \end{array} \right), \left(\begin{array}{c} \frac{3}{2} \\ \frac{3}{2} \end{array} \right)$. Similarly, $v_b$ is connected by three edges two itself whose $H$ values are $\left(\begin{array}{c} 0 \\ 0 \end{array} \right), \left(\begin{array}{c} 0 \\ 0 \end{array} \right), \left(\begin{array}{c} 1 \\ 0 \end{array} \right)$ and by two to $v_a$, whose $H$ values are $\left(\begin{array}{c} \frac{1}{2} \\ \frac{1}{2} \end{array} \right), \left(\begin{array}{c} \frac{1}{2} \\ \frac{-1}{2} \end{array} \right)$.  \\

\nid Given a path $p \in \mP(G)$, the graph $HP_{\phi}$ can be used to find $HP\big(\phi^k(p)\big)$ for any value of $k$.

\begin{lemma} \label{pathsandhp} Let $p \in \mP(G)$. Write $p = p_1 \cdot \ldots \cdot p_s$ as a concatenation of edges, and let $e_1, \ldots, e_s$ be these edges. Let $P^k_i = \wh{\mP_k(HP_\phi, e_i)} \subset \BR HP_{\phi}$. Then for any $k \geq 1$:

$$HP(\phi^k (p)) = \bigcup_{i=1}^{s} \big[\wh{p_1 \cdots p_{i-1}} + H\big(P^k_i \big) \big] $$
\end{lemma}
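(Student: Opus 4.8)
The plan is to reduce first to the case where $p$ is a single edge and then to argue by induction on $k$, using Lemma \ref{Locality} to remove one application of $\phi$ at a time and reading the result off the combinatorics of $HP_\phi$, in which a walk of length $k+1$ out of a vertex is an edge out of that vertex followed by a walk of length $k$.

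For the reduction, write $p = p_1\cdots p_s$ as a reduced concatenation of oriented edges. Since $\phi$ sends paths to paths, $\phi^k(p) = \phi^k(p_1)\cdots\phi^k(p_s)$ as a concatenation of paths, and because $\oshd$ of a concatenation $uv$ is $\oshd u$ followed by $\oshd v$ translated by the terminal point $\wh u$ of $\oshd u$, we get
$$\shd(\phi^k(p)) \;=\; \bigcup_{i=1}^s \Big[\shd(\phi^k(p_i)) + \wh{\phi^k(p_1\cdots p_{i-1})}\Big],$$
and the analogous identity for $HP$. Thus it suffices to describe $HP(\phi^k(p_i))$ together with the translation $\wh{\phi^k(p_1\cdots p_{i-1})}$; the analysis of the action of $\phi_{ab}$ on $\BR G$ carried out above shows that $\wh{\phi^k(p_1\cdots p_{i-1})}$ equals $\wh{p_1\cdots p_{i-1}}$ up to a bounded correction depending only on $\iota(p_i)$, and since the shadow of a path and of its reverse differ by a translation (by the total homology class), everything further reduces to the case of a single positively oriented edge $e$.

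In that case I would prove, by induction on $k$, that $HP(\phi^k(p_e))$ is the translate of $H\big(\wh{\mP_k(HP_\phi, e)}\big)$ by the half-point of $p_e$ — the $\tfrac12\wh e$ term in the definition of $H$ and the correction $R$ occurring in $L$ being exactly what makes this a clean equality. The case $k=1$ is immediate from the definitions: the edges of $HP_\phi$ out of $v_e$ are in bijection with the segments of $\oshd\phi(e)$, and $H$ of such an edge is the half-point of the corresponding segment shifted by $-\tfrac12\wh e$. For the inductive step, apply Lemma \ref{Locality}: $HP(\phi^{k+1}(p_e)) = \bigcup_{x\in HP(\phi^k(p_e))} L(x)$. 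Feeding in the inductive description of $HP(\phi^k(p_e))$, indexed by length-$k$ walks $\eta_1\cdots\eta_k$ out of $v_e$, the distinguished edge $e(x)$ of the half-point $x$ attached to such a walk is the edge of $G$ corresponding to $\tau(\eta_k)$; by Lemma \ref{notdepend}, $L(x) = x + L'(e(x))$, and $L'(e')$ is exactly $\{H(\eta) : \iota(\eta) = v_{e'}\}$ up to the same fixed shift (which is supplied by the $R$-correction in $L'$). Hence the half-points of $\phi^{k+1}(p_e)$ are obtained by appending to each length-$k$ walk one further edge $\eta_{k+1}$ out of $v_{\tau(\eta_k)}$ — producing precisely the length-$(k+1)$ walks out of $v_e$ — and, by linearity of $H$ together with $\wh{\eta_1\cdots\eta_{k+1}} = \wh{\eta_1\cdots\eta_k} + \eta_{k+1}$, their positions are exactly $H$ of these walks. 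Re-inflating from a single edge to $p$ via the reduction above yields the stated formula.

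The delicate point throughout, and the part I expect to be the main obstacle, is the bookkeeping of the affine translations: one has to verify that the displacement accumulated along a length-$k$ walk equals $H$ of that walk on the nose, not merely up to a bounded error. This is exactly where the structural description of $\phi_{ab}$ enters — $\phi_{ab}$ acts as the identity on the homology class of every circuit, so that $\phi_{ab}\wh q = \wh q + R(\wh q)$ with $R(\wh q)$ depending only on $\iota(q)$ and $\tau(q)$ — and, combined with Lemma \ref{notdepend} (which says $L(x)-x$ depends only on $e(x)$), this is what makes the recursion genuinely local, so that the accumulated translation is a telescoping sum of $H$-values rather than an uncontrolled drift. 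A secondary, routine nuisance is orientation: one must carry the signs with which $p$ traverses its edges, using that reversing a path reflects its parametrized shadow and translates it by the total homology class while leaving the set of half-points intact up to that same translation.
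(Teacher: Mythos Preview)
Your approach and the paper's are at heart the same: both iterate Lemmas \ref{Locality} and \ref{notdepend} to attach to each half-point of $\phi^k(p)$ a length-$k$ walk in $HP_\phi$, with the accumulated displacement equal to $H$ of that walk. The paper simply organizes the iteration in the opposite direction and, crucially, does \emph{not} split $p$ into edges first. It unwinds from the top: given $x\in HP(\phi^k(p))$ one writes $x=x_{k-1}+y_{k-1}$ with $x_{k-1}\in HP(\phi^{k-1}(p))$ and $y_{k-1}=H(\eta_k)\in L'(e(x_{k-1}))$, and repeats down to $x_0\in HP(p)$; the half-points $x_0$ of $p$ already sit at the positions $\wh{p_1\cdots p_{i-1}}$ (up to the $\tfrac12$-edge shift you noticed), so the edge decomposition falls out at the end rather than being imposed at the start.

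This organizational difference matters because your reduction step contains a soft spot. The translation attached to the $i$-th edge after splitting is $\wh{\phi^k(p_1\cdots p_{i-1})}=\phi_{ab}^{\,k}\,\wh{p_1\cdots p_{i-1}}$, and your assertion that this differs from $\wh{p_1\cdots p_{i-1}}$ by a bounded correction is not true in general: on $\BR G$ the map $\phi_{ab}$ is only block-unipotent (identity on the cycle space, a vertex permutation on the quotient), so $(\phi_{ab}^{\,k}-I)\wh q$ can grow linearly in $k$. What really happens is that this drift is cancelled \emph{exactly} by a matching drift inside $HP(\phi^k(p_i))$, and your telescoping remark is the right intuition for why; but making that cancellation precise is precisely the bookkeeping you were hoping to avoid by splitting first. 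The paper's unwinding never introduces $\phi_{ab}^{\,k}$ at all, which is what makes it cleaner. If you want to keep your edge-by-edge organization, the fix is to carry the exact translation $\phi_{ab}^{\,k}\wh{p_1\cdots p_{i-1}}$ through the induction and check directly that it combines with your single-edge formula to give the stated expression, rather than dismissing it as bounded.
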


\begin{proof}
\nid  Given $x \in HP(\phi^k(p))$, by Lemmas \ref{Locality} and \ref{notdepend}, we have that $x = x_{k-1} + y_{k-1}$ where $x_{k-1} \in HP(\phi^{k-1}(p))$ and $y_{k-1} \in L'(e(x_{k-1}))$. Similarly, $x_{k-1} = x_{k-2} + y_{k-2}$, where $x_{k-2} \in HP(\phi^{k-2}(p))$ and $y_{k-2} \in L'(e(x_{k-2}))$. Continuing in this manner, we can associate, perhaps non uniquely,  to $x$ two sequence $x_0, x_1, \ldots, x_{k-1}$, where $x_i \in HP(\phi^{i}(p))$ and $y_i \in L'(e(x_i))$. We have that  $y_i = H(\eta_i)$ for some edge $\eta_i \in HP_\phi$. Furthermore, by definition we must have that $\tau(\eta_i) = \iota(\eta_{i+1})$. The point $x$ is completely determined by $x_0$ and the sequence of $y_i$'s . The sequence of $y_i$'s determines a path $\pi$ of length $k$ in $\mP_k(HP_\phi,v_{e(x_0)})$, and their sum is $x_0 + H(\wh{\pi})$ . Similarly, any such path determines a sequence of $x_i$'s and $y_i$'s, and hence a point in $HP(\phi^k(p))$.

 \end{proof}

\nid The following lemma and proposition describe the set $\wh{\mP_k(HP_\phi, v_{e_i})}$ when the map $\phi$ is Perron Frobenius.

\begin{lemma} \label{perronfrob} In the notation above, if $\phi$ is Perron Frobenius then $HP_\phi$ is Perron Frobenius.
\end{lemma}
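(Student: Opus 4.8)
The plan is to show that the adjacency matrix of the directed graph $HP_\phi$ coincides with the transition matrix $T$ of $\phi$; once this identification is in place, the lemma is immediate from the definition of a Perron Frobenius graph.

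First I would write down both matrices explicitly with respect to the indexing by $E(G) = \{e_1, \ldots, e_N\}$. By construction of $HP_\phi$, the vertex set is $\{v_{e_1}, \ldots, v_{e_N}\}$, and for each pair $i,j$ the number of directed edges emanating from $v_{e_i}$ and terminating at $v_{e_j}$ is the number of times the path $\phi(e_i)$ traverses the edge $e_j$ (in either direction); hence $A(HP_\phi)_{i,j}$ equals this number. But this is precisely the defining recipe for the entry $T_{i,j}$ of the transition matrix of $\phi$. Therefore $A(HP_\phi) = T$ as elements of $M_N(\BR_{\geq 0})$ (or its transpose, depending on one's orientation convention for the edges of $HP_\phi$, which is harmless since a nonnegative matrix is Perron Frobenius if and only if its transpose is). The one point of bookkeeping is the diagonal: a loop at a vertex of a directed graph contributes $1$ to the corresponding diagonal entry of the adjacency matrix, and likewise $T_{i,i}$ counts traversals of $e_i$ by $\phi(e_i)$ with multiplicity one, so the two conventions agree.

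Then I would simply invoke the definitions. Since $\phi$ is assumed Perron Frobenius, its transition matrix $T$ is a Perron Frobenius matrix, so there is $k > 0$ with $T^k \in M_N(\BR_{> 0})$. Consequently $A(HP_\phi)^k = T^k \in M_N(\BR_{> 0})$, i.e. $A(HP_\phi)$ is a Perron Frobenius matrix, which by definition says exactly that the graph $HP_\phi$ is Perron Frobenius. I do not expect a genuine obstacle here: the entire content of the lemma is the observation $A(HP_\phi) = T$, and the only care required is in matching the "either direction" and loop-multiplicity conventions in the two definitions.
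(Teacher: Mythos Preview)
Your proposal is correct and rests on the same core observation as the paper: positivity of the entries of $A(HP_\phi)^k$ is governed by the transition matrix of $\phi$. Your route is in fact slightly more direct than the paper's: you observe outright that $A(HP_\phi) = T$ (up to transpose) from the definitions, whereas the paper argues entrywise that $(A^k)_{i,j} > 0$ iff $\phi^k(e_i)$ passes through $e_j$, invoking Lemma~\ref{pathsandhp} for the combinatorial interpretation of paths in $HP_\phi$. Both arrive at the same conclusion, but your identification $A(HP_\phi) = T$ avoids the detour through Lemma~\ref{pathsandhp} entirely.
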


\begin{proof}
Let $A$ be the adjacency matrix of $HP_\phi$. Given an integer $k$, and a pair of indices $i,j$ we have that $\big(A^k\big)_{i,j} > 0$ if and only if there is a path of length $k$ from $v_{e_i}$ to $v_{e_j}$. By lemma \ref{pathsandhp}, this happens if and only if $\phi^k(e_i)$ passes through $e_j$. Since $\phi$ is Perron Frobenius, this happens for all sufficiently large values of the number  $k$.
\end{proof}

\begin{prop}\label{shapeofpaths} Let $\Gamma$ be a finite, Perron Frobenius directed graph with $N$ edges. Then there exists a convex polytope $\Pi \subset \BR^N$  with rational vertices such that for any $v_0 \in V(\Gamma)$: $$ \lim_{k \to \infty} \frac{1}{k} \wh{\mP(\Gamma,v_0)} = \Pi$$ where the above limit is in the Hausdorff metric. \end{prop}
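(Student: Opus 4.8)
The plan is to identify $\Pi$ concretely as the \emph{normalized circulation polytope} of $\Gamma$ and then establish the Hausdorff convergence via two one-sided density estimates. (Here I read $\wh{\mP(\Gamma,v_0)}$ in the statement as $\wh{\mP_k(\Gamma,v_0)}=\{\wh p : p\in\mP_k(\Gamma,v_0)\}$, the set of homology vectors of length-$k$ paths from $v_0$.) Since $\Gamma$ is a \emph{directed} graph, a path traverses every edge only in its positive direction, so $\wh p=\sum_i n_i(p)e_i$ has nonnegative coordinates summing to $\length(p)$; hence $\tfrac1k\wh{\mP_k(\Gamma,v_0)}$ is a finite set of rational points in the standard simplex of $\BR^N$. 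I would set
\[
\Pi=\Bigl\{x\in\BR^N : x\ge 0,\ \textstyle\sum_i x_i=1,\ \sum_{\tau(e)=v}x_e=\sum_{\iota(e)=v}x_e\ \text{for all } v\in V(\Gamma)\Bigr\},
\]
the polytope of normalized circulations. It is a bounded polyhedron cut out by linear (in)equalities with integer coefficients, hence a polytope with rational vertices, and it is nonempty because a Perron--Frobenius graph is strongly connected, hence contains a directed cycle whose normalized indicator lies in $\Pi$. Note that $\Pi$ is independent of $v_0$, as the statement demands. It then remains to prove $\tfrac1k\wh{\mP_k(\Gamma,v_0)}\to\Pi$ in the Hausdorff metric.

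First I would prove the \textbf{upper estimate}: every normalized path vector is within $O(1/k)$ of $\Pi$. Strong connectivity provides $L=L(\Gamma)$ so that any ordered pair of vertices is joined by a directed path of length $\le L$. Given $p\in\mP_k(\Gamma,v_0)$, append a path $q$ of length $\le L$ from $\tau(p)$ to $v_0$; then $pq$ is a circuit at $v_0$, so $\wh{pq}=\wh p+\wh q$ satisfies the circulation equations exactly and $\tfrac1{k+|q|}\wh{pq}\in\Pi$. As $\|\wh p\|_1=k$ and $\|\wh q\|_1\le L$, the triangle inequality bounds the distance from $\tfrac1k\wh p$ to this point of $\Pi$ by $C/k$ with $C=C(\Gamma)$, uniformly in $p$. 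Hence $\sup\{d(x,\Pi) : x\in\tfrac1k\wh{\mP_k(\Gamma,v_0)}\}\le C/k\to 0$.

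Next, the \textbf{lower estimate}: every point of $\Pi$ is within $o(1)$ of $\tfrac1k\wh{\mP_k(\Gamma,v_0)}$. For a vertex $V$ of $\Pi$, rationality gives $D\in\BN$ with $DV$ an integer circulation, which decomposes into a finite multiset of directed cycles $\gamma_1,\dots,\gamma_t$. The key construction is a single closed walk at $v_0$ that travels to a vertex of $\gamma_1$, loops around $\gamma_1$ exactly $m$ times, travels to $\gamma_2$, loops $m$ times, $\dots$, loops around $\gamma_t$ $m$ times, and returns to $v_0$, all transits having length $\le L$; its length is $mD+O_\Gamma(1)$ and its edge vector is $mDV+O_\Gamma(1)$. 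Choosing $m\approx k/D$ and padding the walk by a closed walk at $v_0$ of the (bounded, and eventually large) residual length --- possible since $\Gamma$, being Perron--Frobenius, has closed walks at $v_0$ of every sufficiently large length --- produces $p\in\mP_k(\Gamma,v_0)$ with $\|\tfrac1k\wh p-V\|\le C'/k$. For a general $y=\sum_j\lambda_jV_j\in\Pi$ (the $V_j$ the vertices, $D_j$ their denominators), the normalized circulations $\bigl(\sum_j c_jD_j\bigr)^{-1}\sum_j c_jD_jV_j$ with $c_j\in\BN$ are dense in $\Pi$; fixing $\epsilon>0$ and such a point $y'$ with $\|y-y'\|<\epsilon$, the same cycle-looping construction applied to the integer circulation $\sum_j c_jD_jV_j$ yields, for all large $k$, a path in $\mP_k(\Gamma,v_0)$ whose normalized vector tends to $y'$. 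Covering $\Pi$ by a finite $\epsilon$-net converts this into $\limsup_k\sup_{y\in\Pi}d\bigl(y,\tfrac1k\wh{\mP_k(\Gamma,v_0)}\bigr)\le\epsilon$ for every $\epsilon>0$, hence the bound is $0$. The two estimates together give the asserted convergence.

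I expect the main obstacle to be in the lower estimate, namely making the approximation error genuinely $o(1)$: a careless realization of an $m$-fold circulation as $m$ separately concatenated cycles costs $\Theta(k)$ in transit overhead and destroys the estimate, so one must loop each cycle of a \emph{fixed} decomposition many times in place, keeping the total number of transits (and the final length correction) bounded independently of $k$. The remaining ingredients --- the polyhedral facts about $\Pi$, the bounded-cancellation type length-and-vector bookkeeping, the Perron--Frobenius input for achieving exact length $k$, and the $\epsilon$-net argument upgrading pointwise approximations to a single Hausdorff limit --- are routine.
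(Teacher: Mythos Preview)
Your proposal is correct, and your polytope $\Pi$ coincides with the paper's $\Sigma_1$: the circulation constraints $\sum_{\tau(e)=v}x_e=\sum_{\iota(e)=v}x_e$ are exactly the condition $\partial x=0$ defining $H_1(\underline{\Gamma},\BR)=\textup{span }\wh{\mathcal{C}(\underline{\Gamma})}$, so both of you are intersecting the cycle space with the standard simplex. Your upper estimate is essentially identical to the paper's.

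The lower estimates diverge. The paper argues algebraically: it shows that every \emph{integer} point $x\in\Sigma_k$ whose support is a connected subgraph of $\Gamma$ is realized as $\wh{\gamma_0}$ for some directed closed walk $\gamma_0$ of length exactly $k$. The mechanism is to write $x$ as a sum of undirected cycles, splice them into one undirected circuit using connectedness of the support, and then repeatedly cancel back-and-forth edge pairs (possible since $x\ge 0$) to obtain a genuinely directed circuit. Points with disconnected support lie in a finite union of proper subspaces of the cycle space, hence are nowhere dense in $\Sigma_k$; combined with the uniform density of lattice points, this yields $\Sigma_k\subset B_{C}(\wh{\mP_k(\Gamma,v_0)})$ for a constant $C$ independent of $k$, i.e.\ an explicit $O(1/k)$ rate for the Hausdorff distance. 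Your approach is instead constructive and pointwise: fix a cycle decomposition of a rational circulation, loop each cycle many times with bounded transit overhead, pad to length $k$ using primitivity, and then upgrade via an $\epsilon$-net. This is perfectly valid and arguably more transparent, but as written it yields only $o(1)$ rather than $O(1/k)$, since the overhead constant depends on the chosen rational approximant. The paper's argument is slicker and gives a uniform rate; yours is more hands-on and would port more easily to situations where one wants explicit representative paths.
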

\begin{proof}

  Fix a vertex $v_0 \in V(\Gamma)$. Choose an ordering $e_1, \ldots, e_N$ of $E(\Gamma)$. This gives an identification of $\BR \Gamma$ with $\BR^N$. Given an real number $k > 0$, let:

 $$\Sigma_k = \mC \cap \{\left(\begin{array}{c} x_1\\ \dots \\ x_{N} \end{array} \right) \in \BR^N \mid x_1 + \ldots + x_{N} = k, \textup{ and  } \forall i: x_i \geq 0 \} $$

 For $k_1 \leq k_2$ let $\Sigma_{[k_1,k_2]} = \bigcup_{k \in [k_1,k_2]}\Sigma_{k}$. Let $\Sigma_{[k_1, k_2]}^\BZ =  \Sigma_{[k_1, k_2]} \cap \BZ^{N}$. Since the graph $\Gamma$ it is Perron Frobenius, it is strongly connected, that is: each point can be connected by a path to each other point. This means that there is a constant $C_1$ such that any path can be extended to a circuit in $\Gamma$ containing $v_0$  by adding at most $C_1$ (positively oriented) edges. Thus, $\wh{\mP_k(\Gamma,v_0)} \subset \Sigma^\BZ_{[k,k+C_1]}$. This gives:

 $$\wh{\mP_k(\Gamma,v_0)} \subset B_{C_1}\big(\Sigma_{[k, k+C_1]}\big)$$
 \nid By definition, we have $\frac{1}{k} \Sigma_k = \Sigma_1$, so:
 $$\frac{1}{k}\wh{\mP_k(\Gamma,v_0)}  \subset  B_{\frac{C_1}{k}}\big(\Sigma_{[1, 1 + \frac{C_1}{k}]}\big)$$

  Let $t >  1$. Since $\Sigma_t = t \Sigma_1$, we have that any point in $y \in \Sigma_t$ is of the form $tp$ for some $p \in \Sigma_1$, and thus:
 $$d(y,\Sigma_1) \leq d(p,tp) = (t-1)\|p\|_2 \leq (t -1)\max_{q \in \Sigma_1} \|q\|_2 $$

 Setting $D = 1 + \max_{q \in \Sigma_1} \|q\|_2 $, we get $\Sigma_{[1, 1 + \frac{C_1}{k}]} \subset B_{\frac{DC_1}{k}}\big(\Sigma_1 \big)$:

 $$\frac{1}{k}\wh{\mP_k(\Gamma,v_0)} \subset B_{\frac{2DC_1}{k} }\big(\Sigma_1\big)$$

    For $x = \sum_{i = 1}^{N}a_i e_i \in \BR \Gamma$, let $\textup{supp}(x) = \{e_i \mid a_i \neq 0\}$. Let $k \in \BN$ and fix some $x \in \Sigma_k^\BZ$. The set  $\textup{supp}(x)$ can be viewed as a subgraph of $\underline{\Gamma}$ in an obvious way. We claim that if $\textup{supp}(x)$ is connected, then $x \in \wh{\mathcal{P}_{k}}$.

To see this, note that since $x \in \mC$, there exists \emph{circuits} $\gamma_1, \ldots, \gamma_l$ in $\underline{\Gamma}$ such that $\sum_{i = 1}^{l} \wh{\gamma_i} = x$. If $c_1, c_2$ are two intersecting circuits in $\underline{\Gamma}$, then there is a circuit $c_3$ in $\underline{\Gamma}$ such that $\wh{c_3} = \wh{c_1} + \wh{c_2}$. This circuit is given by following $c_1$ until it first intersects $c_2$, then following all of $c_2$, and finally resuming $c_1$. Since the graph $\textup{supp}(x)$ is connected, reorder the $\gamma_i$'s such that for all $i > 2$, $\gamma_i$ intersects $\bigcup_{j < i} \gamma_j$. Thus, there is a circuit $\gamma$ in $\underline{\Gamma}$ such that $\wh{\gamma} = x$.

 Since $\gamma$ a circuit in $\underline{\Gamma}$, it may contain edges whose direction is opposite the direction given by $\Gamma$. Suppose, without loss of generality that $ e_1 = (v,w) \in E(\Gamma)$, and $(w,v)$ is an edge in $\gamma$.  Since $x \in \Sigma_k$, we have that $a_1 \geq 0$. Therefore, $e_1$ is also an edge in $\gamma$. Since $\gamma$ is a circuit, we can cyclically reorder $\gamma$ to get a circuit :

 $$\delta = (v,w) \gamma_1' (w,v) \gamma_2'$$ where $\gamma_1'$, $\gamma_2'$ are circuits in $\underline{\Gamma}$. We have that $\wh{\delta} = \wh{\gamma_1'} + \wh{\gamma_2'}= x$. The circuits $\gamma_1'$ and $\gamma_2'$ must intersect, because their union equals the support of $x$, which is connected. Using the procedure described above, we find a circuit $\gamma'$ in $\underline{\Gamma}$ such that $\wh{\gamma'} = x$. Furthermore, $\gamma'$ traverses the same edges as $\gamma$ the same number of times, except for $e_1$ which it traverses one less time in each direction, so its length is $2$ less than the length of $\gamma$. We can proceed inductively in this manner, until we get a circuit $\gamma_0$ which traverses all edges in the direction given by $\Gamma$, such that $\wh{\gamma_0} = x$. \\

\nid For any $k$ let $$\Delta_{k} = \{x \in \Sigma_{k} \mid \textup{supp}(x) \textup{ is  not connected } \}$$

\nid The above discussion gives us that $$\Sigma_k^\BZ - \Delta_k \subset \wh{\mP_k(\Gamma)} $$
\noindent The space $\mC$ is a subspace of $\BR \Gamma$ that is defined over $\BZ$, and thus $\mC \cap \BZ^N$ is a lattice of rank $\dim \mC$ in $\mC$. Therefore, there exists a constant $C_2$ such that for any $k$, it's true that $d_H(\Sigma_k,\Sigma_k^\BZ) < C_2$, so we have:
$$\Sigma_k - \Delta_k \subset B_{C_2}\left( \wh{\mP_k(\Gamma)}\right) \subset B_{C_1 + C_2}\left(\wh{\mP_k(\Gamma,v_0)}\right)$$
If $\textup{supp}(x)$ is not connected, then it is a proper, disconnected subgraph of $\Gamma$.  Let $\Gamma_1, \Gamma_2$ be two connected components of  $\textup{supp}(x)$. For $i = 1,2$, choose vertices $v_i \in V(\Gamma_i)$. Since $\Gamma$ is strongly connected, there are paths $P_{1,2}$ and $P_{2,1}$ in $\Gamma$ connecting $v_1$ to $v_2$ and $v_2$ to $v_1$ respectively. Since $\Gamma$ is directed, we can't have that $\wh{P_{1,2}} = - \wh{P_{2,1}}$. The cycle $P_{1,2} P_{2,1}$ has nontrivial image in $\BR \Gamma$, and is not contained in $\textup{span } \mathcal{C}\big(\underline{\textup{supp}(x)}\big)$.  Thus $\textup{span }\wh{\mathcal{C}(\underline{\textup{supp}(x)})}$ is a proper subspace of $\mC$. Since $\Sigma_k$ is a $\dim \mC - 1$ dimensional polytope that does not pass through the origin, we that $\Sigma_k \cap \textup{span } \wh{\mathcal{C}(\underline{\textup{supp}(x)})}$ is a polytope of dimension $\leq \dim \mC - 2$. Thus $ \Sigma_k - \textup{span }\wh{\mathcal{C}(\underline{\textup{supp}(x)})}$ is dense in $\Sigma_k$. Since $$\Delta_k \subset \bigcup_{\Gamma' \textup{ is a disconnected subgraph of $\Gamma$}} \textup{span }\wh{\mathcal{C}(\underline{\Gamma'})}$$
\nid and the above union is finite, we have that $\Sigma_k - \Delta_k$ is dense in $\Sigma_k$. Thus:

$$\Sigma_k \subset B_{C_1 + C_2}\left(\wh{\mP_k(\Gamma,v_0)}\right)$$
\nid And:
$$\Sigma_1 \subset B_{\frac{C_1 + C_2}{k}}\left(\frac{1}{k} \wh{\mP_k(\Gamma,v_0)}\right) $$
Putting the above inclusions together,  we get:

$$ \Sigma_1 \subset B_{\frac{C_1 + C_2}{k}} \big( \frac{1}{k}\wh{\mP_k(\Gamma, v_0)} \big) \subset B_{\frac{C_1 + C_2 + 2DC_1}{k}} \big(\Sigma_1 \big) $$
and thus $$\lim_{k \to \infty} \frac{1}{k} \wh{\mP_k(\Gamma,v_0)}  = \Sigma_1$$

  Since $\Sigma_1$ is the intersection of a convex polytope (a simplex) with a subspace, it is a convex polytope itself.  Since $\Sigma_1$ was defined by equations and inequalities with coefficients in $\BZ$, it has rational vertices.
 \end{proof}

\begin{corollary} \label{shadmap} In the notation above, if the transition matrix of $\phi$ is Perron Frobenius then there exists a convex polytope $\shd_{\infty} \phi \subset \BR G$ with rational vertices such that for any path $p \in \mP(G,v_0)$ we have that:
$$\shd_{\infty} \phi = \lim_{k \to \infty} \shd_k(\phi^k (p)) $$

\end{corollary}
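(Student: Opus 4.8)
The plan is to transport the whole question through the half-point graph $HP_{\phi}$ and then invoke Proposition~\ref{shapeofpaths}. By hypothesis the transition matrix of $\phi$ is Perron Frobenius, which is by definition the statement that $\phi$ is Perron Frobenius, so Lemma~\ref{perronfrob} tells us the directed graph $\Gamma := HP_{\phi}$ is Perron Frobenius. Applying Proposition~\ref{shapeofpaths} to $\Gamma$, I obtain a convex polytope $\Pi \subset \BR HP_{\phi}$ with rational vertices such that $\frac{1}{k}\wh{\mP_k(\Gamma, w)} \to \Pi$ in the Hausdorff metric for every vertex $w$ of $\Gamma$; note that $\Pi$ depends only on $\Gamma$, hence only on $\phi$. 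Recall also that $H \colon \BR HP_{\phi} \to \BR G$ is the linear map with $H(\eta) = y_{\eta} - x_{\eta}$ on each basis edge $\eta$, where the half-point $y_{\eta}$ and $x_{\eta} = \frac{1}{2}\wh{d}$ both lie in $\frac{1}{2}\BZ^N$; thus $H$ is a rational linear map. I would then define $\shd_{\infty}\phi := H(\Pi)$. Since a linear map commutes with convex hulls, $H(\Pi)$ is the convex hull of the (rational) images of the vertices of $\Pi$, so it is a convex polytope with rational vertices contained in $\BR G$, as required.

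Next, fix a path $p \in \mP(G, v_0)$ of positive length and write $p = p_1 \cdots p_s$ as a concatenation of its edges $e_1, \ldots, e_s$. Lemma~\ref{pathsandhp} gives, for every $k \ge 1$,
$$HP(\phi^k(p)) = \bigcup_{i = 1}^{s} \Big[\, \wh{p_1 \cdots p_{i-1}} + H\big(\wh{\mP_k(HP_{\phi}, e_i)}\big)\Big].$$
Dividing by $k$ and using the linearity of $H$, the $i$-th term becomes $\frac{1}{k}\wh{p_1 \cdots p_{i-1}} + H\big(\frac{1}{k}\wh{\mP_k(HP_{\phi}, e_i)}\big)$. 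Here $H$ is Lipschitz and $\frac{1}{k}\wh{\mP_k(HP_{\phi}, e_i)} \to \Pi$ by the previous paragraph, so the second summand converges to $H(\Pi)$ in the Hausdorff metric, while $\big\|\frac{1}{k}\wh{p_1 \cdots p_{i-1}}\big\| \le \frac{i-1}{k} \le \frac{s-1}{k} \to 0$ since $s$ is fixed. Thus each of the $s$ sets whose union is $\frac{1}{k}HP(\phi^k(p))$ converges to $H(\Pi)$, and, since $d_H\big(\bigcup_i A_i, B\big) \le \max_i d_H(A_i, B)$ for nonempty compact sets, the union converges too: $\frac{1}{k}HP(\phi^k(p)) \to H(\Pi) = \shd_{\infty}\phi$.

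Finally I would pass from half-point sets back to shadows. By construction $\shd_k(\phi^k(p)) = \frac{1}{k}\shd(\phi^k(p))$, and $d_H\big(\shd(q), HP(q)\big) \le \frac{1}{2}$ for every path $q$, so $d_H\big(\shd_k(\phi^k(p)), \frac{1}{k}HP(\phi^k(p))\big) \le \frac{1}{2k} \to 0$. Combining with the previous paragraph gives $\shd_k(\phi^k(p)) \to \shd_{\infty}\phi$, with $\shd_{\infty}\phi$ independent of $p$, which is exactly the statement of the Corollary. The substance of the argument is entirely in Lemma~\ref{pathsandhp}, Lemma~\ref{perronfrob} and Proposition~\ref{shapeofpaths}, which I may assume; the only points needing any care are the bookkeeping of the $\frac{1}{k}$-scaling — so that the translations $\frac{1}{k}\wh{p_1 \cdots p_{i-1}}$ become negligible while the sets $H(\frac{1}{k}\wh{\mP_k(HP_{\phi}, e_i)})$ stabilize — and the elementary fact that a finite union of Hausdorff-convergent compact sets with a common limit converges to that limit. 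I do not anticipate a genuine obstacle beyond this assembly.
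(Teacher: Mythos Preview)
Your proof is correct and follows essentially the same route as the paper: define $\shd_{\infty}\phi = H(\Pi)$ with $\Pi$ the limit polytope from Proposition~\ref{shapeofpaths}, invoke Lemma~\ref{perronfrob} for the Perron--Frobenius hypothesis on $HP_\phi$, use Lemma~\ref{pathsandhp} for the half-point decomposition, and finish with $d_H(\shd q, HP(q)) \le \tfrac12$. The paper's proof is a terse sketch; you have simply written out the bookkeeping (the vanishing of the translations $\tfrac{1}{k}\wh{p_1\cdots p_{i-1}}$ and the union-of-limits argument) that the paper leaves implicit.
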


\begin{proof}
Set  $ \shd_{\infty} \phi = H\big( \Sigma_1 \big)$,  where $\Sigma_1 \subset \BR HP_{\phi}$ is the set defined in the proof of Proposition \ref{shapeofpaths}. For convexity, note that this is just the image of a convex polytope under a linear map. For the rationality of the vertices, note that $H$ is given by a matrix with integer entries, and that $\Sigma_1$ has rational coefficients. The convergence follows directly from Proposition \ref{shapeofpaths}, lemmas \ref{pathsandhp} and \ref{perronfrob} and the fact that for any path $p$, it's true that $d_H(\shd p, HP(p)) \leq \frac{1}{2}$.
\end{proof}

\subsubsection{Passing from shadows in $\BR G$ to shadows in $\BR^n$} Let $G, \phi$ be as above. Let $T$ be a minimal spanning tree for $G$. Let $A = E(G) - E(T)$. There is a  map $R_A: \mP(G,v_0) \to F_A \cong F_n$, (where $F_A$ is the free group on the set $A$) given by reading off the edges in $A$ that $p$ passes through, and obtaining a reduced word in $F_A$ (c.f. \ref{}).

A path $p \in \mP(G)$ is called \emph{reduced} if it immersed, that is: it does not contain any subpaths of the form $e \overline{e}$, where $e \in E(G)$. Given any path $p$  consider the following path: start with $p$, and repeatedly removing all subpaths of the form $e \overline{e}$, until there are none left to remove. We call this new path $p^{red}$. We define $\shd^{red}(p) = \shd(p^{red})$ and call this the \emph{reduced shadow of p}.

 If $p \in \mathcal{P}(G,v_0) $, we have that $R_A(p) = R_A(p^{red})$. Let $R_A: \BR G \to F_A$ be the projection map that reads off the coordinates corresponding to edges in $A$. If $R_A(p) = w$  then we have that $R_A(\shd^{red} p) = \shd w$.

Let $F_A$ be the free group on the elements of $A$.  The map $\phi$ induces an automorphism $f_A$ of $F_A$. If we identify $F_A$ with $F_n$, then this automorphism is conjugate to $f$.

\begin{prop} Let $G,\phi$ be a train track representative for $f$. Then, for any word $w \in F_A$ with infinite $f_A$-orbit, we have that $$\lim_{k \to \infty} \shd_k f^k(w) = R_A \shd_{\infty} \phi $$

\end{prop}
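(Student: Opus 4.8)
The plan is to transfer the convergence statement from Corollary \ref{shadmap}, which lives in $\BR G$, down to $\BR^n$ via the projection $R_A$. Fix a word $w \in F_A$ with infinite $f_A$-orbit, and let $p_w$ be a reduced path in $\mathcal{P}(G,v_0)$ representing $w$; since $\phi$ is a train track representative, $w$ having infinite $f_A$-orbit means $p_w$ is a legal, hence immersed, path with infinite $\phi$-orbit, so Corollary \ref{shadmap} applies to give $\shd_{\infty}\phi = \lim_{k\to\infty}\shd_k(\phi^k(p_w))$ in the Hausdorff metric. First I would record that $R_A$ is a linear map $\BR G \to \BR^n$ (projecting onto the coordinates in $A$), hence Lipschitz, and therefore $R_A$ is continuous with respect to the Hausdorff metric on compact subsets: $d_H(R_A X, R_A Y)\le \|R_A\|\, d_H(X,Y)$. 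Applying $R_A$ to the Corollary \ref{shadmap} limit then yields $\lim_{k\to\infty} R_A\,\shd_k(\phi^k(p_w)) = R_A\,\shd_{\infty}\phi$, and since $R_A$ commutes with the scaling by $1/k$, the left side is $\lim_{k\to\infty}\frac{1}{k}R_A\,\shd(\phi^k(p_w))$.

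The remaining point is to identify $\frac1k R_A\,\shd(\phi^k(p_w))$ with $\shd_k f_A^k(w)$ up to a Hausdorff error that vanishes as $k\to\infty$. By the discussion preceding the proposition, $R_A(\shd^{red} q) = \shd\big(R_A(q)\big)$ for any $q\in\mathcal{P}(G,v_0)$, and $R_A(q)=R_A(q^{red})$; taking $q = \phi^k(p_w)$ gives $R_A\big(\shd^{red}\phi^k(p_w)\big) = \shd\big(f_A^k(w)\big)$ exactly. So the one discrepancy to control is between $R_A\,\shd(\phi^k(p_w))$ and $R_A\,\shd^{red}(\phi^k(p_w))$ — that is, the difference between the shadow of a path and the shadow of its reduction, after projecting. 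Here I would invoke the bounded cancellation theorem (Theorem \ref{thurston}) applied to $\phi$ (equivalently $f_A$): when we pass from $\phi^k(p_w)$ to its reduction, the amount of backtracking that gets cancelled, measured as a number of edges, is bounded by a constant $C$ independent of $k$ — this is exactly the Cooper/Thurston bounded cancellation bound, since $\phi^k(p_w)$ is a concatenation of $\phi^k(e_i)$ over the (bounded-backtracking) edges $e_i$ of $p_w$ and the cancellation at each of the $O(k)$... no: more carefully, since $p_w$ is reduced and $\phi$ has the train-track property, the folding needed to reduce $\phi^k(p_w)$ happens only at the finitely many illegal turns created, with total size $O(k)$ but each deleted backtrack segment reappears in $\shd$ as an excursion of bounded diameter, so $d_H\big(\shd \phi^k(p_w), \shd^{red}\phi^k(p_w)\big)$ is bounded by a constant times the maximal single-backtrack depth, which is $O(1)$ by Theorem \ref{thurston}. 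Dividing by $k$ kills this term.

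Assembling the pieces: $d_H\big(\shd_k f_A^k(w),\, R_A\,\shd_{\infty}\phi\big) \le d_H\big(\tfrac1k\shd f_A^k(w),\, \tfrac1k R_A\,\shd\phi^k(p_w)\big) + d_H\big(\tfrac1k R_A\,\shd\phi^k(p_w),\, R_A\,\shd_{\infty}\phi\big)$, where the first summand is $\le \frac{1}{k}\|R_A\|\cdot d_H\big(\shd^{red}\phi^k(p_w),\shd\phi^k(p_w)\big) = O(1/k)$ and the second tends to $0$ by the argument above. Hence $\lim_{k\to\infty}\shd_k f_A^k(w) = R_A\,\shd_{\infty}\phi$, which after the identification $F_A\cong F_n$ (under which $f_A$ is conjugate to $f$) is the claim. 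The main obstacle I anticipate is making the bounded-cancellation step fully rigorous: one needs to argue that the symmetric difference between $\shd$ of a path and $\shd$ of its reduction consists of finitely many "spikes," each of diameter controlled by the maximal depth of a cancelled backtrack, and that this depth is bounded uniformly in $k$ because $p_w$ is legal and $\phi$ is a train track map, so that $\phi^k(p_w)$ differs from a reduced path only by bounded-size tightening at turns — a quantitative statement for which Theorem \ref{thurston} (equivalently, Cooper's bounded cancellation) is precisely designed.
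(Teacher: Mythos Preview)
Your argument has a genuine gap in the treatment of non-legal paths. The assertion that ``$w$ having infinite $f_A$-orbit means $p_w$ is a legal\ldots path'' is incorrect: legality refers to the train-track equivalence relations $\sim_v$ at vertices, which is a strictly stronger condition than being reduced, and the reduced path in $G$ representing a generic $w\in F_A$ will typically contain illegal turns. (Corollary~\ref{shadmap} does not need legality, so your appeal to it is fine; the problem is downstream.) For legal $p_w$ your outline is essentially the paper's easy case: then $\phi^k(p_w)$ is already reduced, so $R_A\,\shd\,\phi^k(p_w)=\shd\, f_A^k(w)$ exactly, and applying $R_A$ to the limit in Corollary~\ref{shadmap} finishes.

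For non-legal $p_w$, the bounded-cancellation step does not deliver the $O(1)$ bound you claim. Theorem~\ref{thurston} bounds the cancellation introduced by a \emph{single} application of $\phi$; iterating, the cancelled segment $\gamma_k$ at an illegal turn of $p_w$ satisfies a recursion of the form $|\gamma_{k+1}|\le |\phi(\gamma_k)|+C$, so its word length can grow like $\rho^k$. The corresponding ``spike'' in the shadow is $\shd\,\gamma_k$, which is contained in a translate of $\shd\,\phi^k(e_i)$ for some edge $e_i$; by Corollary~\ref{shadmap} this has diameter $O(k)$, not $O(1)$. After dividing by $k$ you only get $d_H\big(\shd_k\phi^k(p_w),\,\shd_k^{red}\phi^k(p_w)\big)=O(1)$, which does not force $\shd_k^{red}\phi^k(p_w)$ to fill out $\shd_\infty\phi$.

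The paper closes this gap by a different mechanism: it invokes the Bestvina--Feighn--Handel fact that a sufficiently long legal subpath of $(\phi^M p_w)^{red}$ contains a sub-segment $p''$ (which one may take to start at $v_0$) such that $\phi^k(p'')$ is a subpath of $(\phi^{k+M}p_w)^{red}$ for all $k$. This yields a sandwich
\[
\tfrac{k}{k-1}\Big(\tfrac{\wh{x}}{k-1}+\shd_{k-1}\phi^{k-1}(p'')\Big)\ \subseteq\ \shd_k^{red}\phi^k(p_w)\ \subseteq\ \shd_k\phi^k(p_w),
\]
with both ends converging to $\shd_\infty\phi$ by Corollary~\ref{shadmap}; applying $R_A$ then gives the statement. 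The missing ingredient in your approach is precisely this survival-of-long-legal-segments input from \cite{BeFeH}; bounded cancellation alone does not suffice for the non-legal case.
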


\begin{proof}
Suppose $p$ is a legal path then $\phi^k(p)^{red} = \phi^k{p}$. Thus, if $p$ is legal: $$R_A\big(\shd_k \phi^k(p) \big) = \shd_k f_A^k\big(R_A(p)\big)  $$

 \nid Since $\phi$ is a train track representative, the transition matrix for $\phi$ is Perron Frobenius. Thus. by Corollary \ref{shadmap}:

 $$\lim_{k\to \infty } \shd_k f_A^k(R_A(p)) = R_A \big(\shd_{\infty} \phi \big) $$

\nid and $R_A \big(\shd_{\infty} \phi \big)$ is a convex polytope with rational vertices.

Now suppose that $p \in \mP(G,v_0)$ is a general, not necessarily legal such that the forward orbit of $p$ under the action of $\phi$ is infinite. An illegal subpath of $p$ of length $2$ is called an \emph{illegal turn}. Let $I(p)$ be the number of illegal turns in $p$. Since paths of length $1$ are legal, as are all  their images under $\phi^k$ for every $k$, we get that $I\big(\phi^k(p)^{red} \big)\geq I(p)$ for every $k$. Since the length of $\phi^k(p)$ goes to infinity, we get that given any $L > 0$, the paths $\phi^k(p)^{red}$ will contain a legal subpath of length $> L$ for all sufficiently large $k$.

  By \cite{BeFeH}, if there exists a number $C>0$ such that if $p$ contains a legal subpath $p'$ of length greater than $C$, then $p'$ contains a subpath $p''$ such that $\phi^k(p'')$ is a subpath of $\phi^k(p)^{red}$, for any $k$.  Replace $f$ with a sufficiently high power of itself, such that $\phi(p)^{red}$ hs a legal subpath of  length greater than $C$. Since $\phi$ is Perron Frobenius, by taking an even higher power we can assure that the path $p''$ passes through $v_0$. By replacing $p''$ with a legal subpath of itself, we can take $\iota(p'') = v_0$.

Write $q = \phi(p) = xp''y$ where $x,y$ are paths. Then for any $k$: $$\wh{x} + \shd \phi^k(p'') \subset \shd^{red} \phi^k(q)$$

this gives:

$$\frac{k}{k-1}\big(\frac{\wh{x}}{k-1} + \shd_{k-1} \phi^{k-1}(q) \big)\subseteq \frac{k}{k-1} \shd_{k-1}^{red} \phi^{k-1} (q)= \shd_k^{red} \phi^k(p) \subseteq \shd_k \phi^k(p) $$

 Since the leftmost and rightmost expressions above converge to $\shd_{\infty} \phi$, we have that:

$$\lim_{k\to \infty}  \shd^{red}_k \phi^k(p) = \shd_{\infty} \phi$$

\nid as long as $p$ has an infinite $\phi$ orbit and therefore $$\lim_{k\to \infty} \shd_k f_A^k (R(p)) = R_A \shd_{\infty} \phi $$
\nid as long as $R(p)$ has infinite $f_A$ orbit. The result now follows.
\end{proof}

\subsubsection{Passing from $f_A$ shadows to $f$ shadows}
Suppose $f = g f_A g^{-1}$ for $g \in \aut$. Suppose $w$ has infinite $f$ orbit. Then $g^{-1}(w)$ will have infinite $f_A$ orbit. The sets $\shd_k f_A^k(g^{-1}w)$ converge to $R_A \shd_{\infty} \phi$. To conclude the proof of Theorem \ref{shadowshape}, we need to describe $\shd_k gf_A^k(g^{-1}w)$

\begin{lemma} \label{conjugation}
Let $h \in \aut$. There exists a number $C = C(h)$ such that for any word $w \in F_n$, it's true that $d_{H}\big(\shd h(w), h_{ab} \shd w \big) < C$.
\end{lemma}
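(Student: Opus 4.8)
The plan is to reduce the statement about shadows to the bounded cancellation theorem of Thurston (Theorem \ref{thurston}) together with the abelianization description of $\shd$. First recall that for a word $w = c_1 c_2 \cdots c_m$ written as a reduced word in the generators, the parametrized shadow $\oshd w$ visits, in order, the partial-sum vectors $\wh{c_1}, \wh{c_1 c_2}, \ldots, \wh{c_1 \cdots c_m} = \wh{w}$, so that $\shd w$ is (up to the $\frac12$-thickening coming from the interiors of edges) exactly the set $\{\wh{\lceil w\rceil_j} : 0 \le j \le m\}$ of abelianizations of initial subwords of $w$. Consequently $\shd h(w)$ is the set of $\wh{\lceil h(w)\rceil_j}$ and $h_{ab}\shd w$ is the set of $h_{ab}\wh{\lceil w\rceil_j} = \wh{h(\lceil w\rceil_j)}$. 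Thus it suffices to show that every initial subword of the reduced form of $h(w)$ is, up to bounded distance in $H_1$, the abelianization of $h(u)$ for some initial subword $u$ of $w$, and conversely.

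The key step is the following consequence of bounded cancellation. Write $w = uv$ with $u = \lceil w\rceil_j$. When we form the reduced word representing $h(w) = h(u)h(v)$, the only cancellation occurs at the interface between $h(u)$ and $h(v)$, and by Theorem \ref{thurston} (applied to $h$, and also to $h^{-1}$ to get cancellation bounds in both directions) at most $C_0 = C_0(h)$ letters are cancelled from the tail of $h(u)$; here $C_0$ is Thurston's constant. Therefore the reduced word for $h(w)$ agrees with the reduced word for $h(u)$ on all but its last $\le C_0$ letters, followed by a tail lying inside the reduced word for $h(v)$. It follows that for each index $j$ there is an index $j'$ (the length of the reduced form of $h(\lceil w\rceil_j)$, minus at most $C_0$) such that $\lceil h(w)\rceil_{j'}$ equals $h(\lceil w\rceil_j)$ with at most $C_0$ letters removed from the end; hence
$$\bigl\| \wh{\lceil h(w)\rceil_{j'}} - h_{ab}\wh{\lceil w\rceil_j}\bigr\| \le C_0.$$
This shows $h_{ab}\shd w \subseteq B_{C_0 + 1}(\shd h(w))$, after adding $1$ (or $\tfrac12$) for the edge-interior thickening that relates $\shd$ to its vertex values. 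For the reverse inclusion, given an initial subword $z = \lceil h(w)\rceil_{i}$ of the reduced form of $h(w)$, let $j$ be the largest index with $|h(\lceil w\rceil_j)^{\mathrm{red}}| - C_0 \le i$; then again $z$ differs from $h(\lceil w\rceil_j)$ by at most $C_0 + \max_e |h(e)|$ letters appended or deleted, giving $\shd h(w) \subseteq B_{C}(h_{ab}\shd w)$ for $C = C_0 + \max_{e\in E(\mathcal R_n)}|h(e)| + 1$. Taking the maximum of the two constants and calling it $C(h)$ finishes the proof.

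The main obstacle is bookkeeping the matching between $j$ and $j'$: one must be careful that the cancellation at the $h(u)|h(v)$ interface is genuinely two-sided and of bounded depth \emph{uniformly in the splitting point}, which is exactly what Theorem \ref{thurston} provides (the constant $C$ there does not depend on $\alpha,\beta$). A secondary, purely cosmetic, point is reconciling $\shd$, which is a subset of $\BR^n$ containing whole unit segments through each edge, with the finite set of lattice points $\wh{\lceil w\rceil_j}$; this contributes only an additive $1$ to the Hausdorff bound and does not interact with the abelianization argument. No Perron--Frobenius or train-track input is needed here — the lemma is valid for an arbitrary $h \in \aut$ and an arbitrary word $w$, as stated.
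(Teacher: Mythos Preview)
Your proposal is correct and follows essentially the same route as the paper: both reduce the lemma to Thurston's bounded cancellation theorem (Theorem \ref{thurston}) by observing that for a splitting $w = uv$, the reduced word for $h(w)$ agrees with $h(u)$ except for a tail of length at most $C_0$, so that every point of $h_{ab}\shd w$ lies within $C_0$ of $\shd h(w)$.

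The one place where your argument differs is the reverse inclusion $\shd h(w) \subset B_C(h_{ab}\shd w)$. You argue directly, choosing for a given prefix $\lceil h(w)\rceil_i$ the largest $j$ with $|h(\lceil w\rceil_j)| - C_0 \le i$ and using that consecutive values $|h(\lceil w\rceil_{j})|$ differ by at most $\max_e |h(e)|$. This works, but the paper's argument is slicker: it simply reapplies the first-direction argument to $h^{-1}$, obtaining $h_{ab}^{-1}\shd h(w) \subset B_{C_2}(\shd w)$ with $C_2 = C(h^{-1})$, and then hits both sides with $h_{ab}$ to get $\shd h(w) \subset B_{\|h_{ab}\| C_2}(h_{ab}\shd w)$. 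This avoids the interpolation bookkeeping entirely, at the cost of introducing the operator norm $\|h_{ab}\|$ into the constant. Either way no train-track or Perron--Frobenius input is needed, as you note.
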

\begin{proof}
 Let $C_1 = C_1(h)$ be a number as provided by Theorem \ref{thurston} for the automorphism $h$ . Let $w \in F_n$, and $x \in \shd w \cap \BZ^n$. Write $w = w_1 w_2$, as a reduced product of words such that $\wh{w_1} = \tau (\oshd w_1) = x$. Let $y = h_{ab} x = \tau \big(\oshd h(w_1) \big)$. It might be true that $y \notin \shd h(w)$. However, by Theorem \ref{thurston}, $\exists y' \in \shd h(w)$ such that $d(y,y') < C_1$, or in other words: $$h_{ab} \shd w \subset B_{C_1} (\shd h(w)) $$

 \nid Let $C_2 = C_2(h^{-1})$ be a number as provided by Theorem \ref{thurston}, for the automorphism $h^{-1}$.  By the argument above,
$h_{ab}^{-1}\shd h(w) \subset B_{C_2} \big(\shd w\big)  $ so:

$$\shd h(w) \subseteq h_{ab} B_{C_2} \big( \shd w\big) \subseteq B_{\|h_{ab} \| C_2} \big(h_{ab} \shd(w) \big) $$

\nid where $\|h_{ab} \|$ is the operator norm of $h_{ab}$. The result now follows by setting $C = \max \{C_1, \|h_{ab}\| C_2 \}$.

\end{proof}

\nid We now have that $d_H\big(\shd_k gf_A^k (g^{-1}w), g_{ab} \shd_k f_A^k (g^{-1}w)\big) \leq \frac{C}{k}$. Theorem \ref{shadowshape} follows immediately, where $\shd_{\infty} f = g_{ab} \circ R_A \shd_{\infty} \phi$.

\subsection{Proof of Theorem \ref{darkshadow}}

As in the proof of Theorem \ref{shadowshape}, we begin by noting that we can replace $f$ with a power of itself. The proof of the following lemma is nearly identical to the proof of Lemma \ref{powers1}, and we do not include it.

\begin{lemma} \label{powers2} Let $f \in \aut$ be fully irreducible such that $f_{ab}$ has finite order. Suppose that there exists an integer $N$ such that the conclusion of Theorem \ref{darkshadow} holds for $f^N$, then it holds for $f$ as well.

\end{lemma}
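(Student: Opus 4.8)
The plan is to mimic the proof of Lemma \ref{powers1} almost verbatim, tracking carefully how the three objects appearing in Theorem \ref{darkshadow} — the limiting measure $\drk_\infty f$, its support point $\overline{\drk_\infty f}$, and the conjugation-equivariance statement — behave under the passage from a power $g = f^N$ back to $f$. First I would fix an $x \in F_n$ with infinite $f$-orbit and a length function $l : E(\CR_n) \to \BR_+$; then $x$ has infinite $g$-orbit, and so does $f^r(x)$ for every $r \geq 0$. Writing an arbitrary integer $P$ as $P = qN + r$ with $0 \leq r \leq N-1$, the key computation is to relate $\drk_P f^P(x)$ to $\drk_q g^q(f^r x)$. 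Since $\drk^l_k p = (\oshd_k p)_* \lambda$ and $\oshd_k p = \frac1k \oshd p$, rescaling the shadow by $P/q$ rescales the darkness measure by the pushforward under multiplication by $P/q$; so I would establish an identity of the form
$$\drk_P^l f^P(x) = (m_{P/q})_* \drk_q^{l} g^q(f^r x),$$
where $m_c : \BR^n \to \BR^n$ is multiplication by $c$, being careful that the length function used on the $g$-side may need to be rescaled (the constant doesn't matter since $\drk_k$ only sees the path up to its total length, which is normalized away).

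Next, for each fixed $r \in \{0,\dots,N-1\}$ the hypothesis that Theorem \ref{darkshadow} holds for $g$ gives $\drk_q^l g^q(f^r x) \to \drk_\infty g$ in the weak-$*$ topology as $q \to \infty$, with $\drk_\infty g$ supported at $\overline{\drk_\infty g} \in \BQ[\rho(g)]^n$, independent of $r$ and of $l$. Since $P/q \to N$ as $P \to \infty$ and pushforward under $m_c$ depends continuously on $c$ (on measures with uniformly bounded support, which holds here because the shadows lie in a bounded region by Theorem \ref{shadowshape}), Lemma 2.7 lets me conclude $\drk_P^l f^P(x) \to (m_N)_* \drk_\infty g$, a probability measure supported at the single point $N \cdot \overline{\drk_\infty g}$. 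Defining $\drk_\infty f := (m_N)_* \drk_\infty g$ and $\overline{\drk_\infty f} := N \cdot \overline{\drk_\infty g}$, this is supported at a point of $\BQ[\rho(f)]^n$ because $\rho(g) = \rho(f)^N$, so $\BQ[\rho(g)] \subseteq \BQ[\rho(f)]$, and multiplication by the integer $N$ keeps us in that field. The limit is manifestly independent of the choice of $x$ and of $l$, as required.

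Finally, for the conjugation statement: given $\phi \in \aut$ and $h = f^\phi$, one has $h^N = (f^N)^\phi = g^\phi$, so by the hypothesis applied to $g$ (for which the full conclusion including equivariance is assumed) we get $\overline{\drk_\infty(g^\phi)} = \phi_{ab}\,\overline{\drk_\infty g}$; multiplying by $N$ and using the definition of $\overline{\drk_\infty}$ for non-powers yields $\overline{\drk_\infty h} = N\,\phi_{ab}\,\overline{\drk_\infty g} = \phi_{ab}\,(N\,\overline{\drk_\infty g}) = \phi_{ab}\,\overline{\drk_\infty f}$, since $\phi_{ab}$ is linear. The main obstacle — such as it is — is the bookkeeping around the length function: one must check that the statement of Theorem \ref{darkshadow} for $g$ applies with whatever length function is forced on us after rescaling, and that the weak-$*$ limit genuinely does not depend on it; this is exactly the content of the "for any length function $l$" clause in the theorem, so no new work is needed, but it should be stated explicitly. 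Because this is so close to Lemma \ref{powers1}, like the authors I would keep the write-up brief.
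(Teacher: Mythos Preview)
Your approach is correct and is exactly what the paper intends: the paper gives no proof of this lemma, saying only that it is ``nearly identical to the proof of Lemma~\ref{powers1}'', and your write-up is precisely that adaptation, with the extra bookkeeping (the inclusion $\BQ[\rho(g)]\subseteq\BQ[\rho(f)]$ from $\rho(g)=\rho(f)^N$, and the conjugation-equivariance check) handled correctly.

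One small computational slip: the scaling goes the other way. For the same word $w=f^P(x)=g^q(f^r x)$ and the same length function $l$, one has $\oshd_P w=\tfrac{q}{P}\,\oshd_q w$, hence
\[
\drk_P^l f^P(x)=(m_{q/P})_*\,\drk_q^l g^q(f^r x),
\]
and since $q/P\to 1/N$ the limit is $(m_{1/N})_*\drk_\infty g$, supported at $\tfrac{1}{N}\,\overline{\drk_\infty g}$. (The paper's proof of Lemma~\ref{powers1} contains the same inversion.) This does not affect the conclusion: $\tfrac{1}{N}\,\overline{\drk_\infty g}$ is still a point of $\BQ[\rho(f)]^n$, and the conjugation-equivariance argument goes through unchanged with $1/N$ in place of $N$. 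Also, your worry about rescaling the length function is unnecessary---the same $l$ appears on both sides of the identity above, since the $1/k$ scaling is applied after integrating against the $l$-dependent form $\delta$.
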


\subsubsection{Half-point darkness measures} Let $G,\phi$ be a train track representative graph with $N$ edges, with $\pi_1(G,v_0) \cong F_n$, and let $\phi: (G,v_0) \to (G,v_0)$ be a continuous function that induces $f$. Assume further that $\phi$ is Perron Frobenius.  Let $l: E(G) \to \BR_{> 0}$ be a length function. Given a path $p \in \mP(G)$, the measure $\drk p$ is supported on segments of the form $[x, x + e_i]$ where $x \in \BZ^N$, and $e_i$ is a standard basis vector. Let $\sigma$ be such a segment. The restriction of $\drk p$ to $\sigma$ is a $1$-dimensional Lebesgue measure. Each such $\sigma$ corresponds to an edge $e_{\sigma} \in E(G)$. Let $c_{\sigma}(p)$ be the number of times $p$ traverses $\sigma$ (in either direction). If we let $l(p)$ be the total length of $p$ then:

$$\drk p \big[\sigma\big] = \frac{c_{\sigma}(p) l(e_{\sigma})}{l(p)}$$

The measure $\drk p$ is completely determined by evaluating it on all segments $\sigma$ as described above. We define a new measure $\drk HP(p)$ on $\mathcal{H}$, by setting, for every $x \in \mathcal{H}$: $\drk HP(p) \big[x \big] = \drk p \big[ \sigma_x \big] $ where $\sigma_x$ is the segment that passes through $x$. For any integer $k$, we define similarly a measure $\drk_k HP(p)$ on $\frac{1}{k} \mathcal{H}$. The following lemma allows us to prove work with half-point measures.

\begin{lemma} Suppose $\{p_k\}_{k=1}^{\infty}$ is a sequence in $\mP(G)$ such $\bigcup_{k=1}^{\infty} \shd_k p_k$ is a bounded subset of $\BR^N$,  and:  $$\lim_{k \to \infty} \drk_k HP(p_k) = \drk_\infty$$ in the weak-* topology, where $\drk_{\infty}$ is a measure supported at a single point.  Then: $$\lim_{k \to \infty} \drk_k p_k = \drk_{\infty} $$

\end{lemma}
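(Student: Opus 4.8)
\emph{Proof strategy.} The plan is to compare $\drk_k p_k$ directly with $\drk_k HP(p_k)$ and show that, tested against any bounded continuous function, the difference vanishes as $k\to\infty$. The underlying observation is that $\drk_k p_k$ is obtained from $\drk_k HP(p_k)$ by replacing each of its atoms with a uniform distribution on a segment of Euclidean diameter $1/k$ centered at that atom; since the diameters shrink, the two sequences have the same weak-$*$ limit.

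First I would record the structure of $\drk_k p_k$. For a single path $p$, the measure $\drk p$ is a finite sum of $1$-dimensional Lebesgue measures, indexed by the segments $\sigma=[x,x+e_i]$ (with $x\in\BZ^N$) traversed by $\oshd p$, the segment $\sigma$ carrying total mass $\drk p[\sigma]=c_\sigma(p)l(e_\sigma)/l(p)$; since distinct segments meet only in $\drk p$-null lattice points, $\sum_\sigma \drk p[\sigma]=\drk p[\shd p]=1$. Scaling by $1/k$ turns $\sigma$ into a segment $\tfrac1k\sigma$ of Euclidean length $1/k$, carrying the same mass $\drk p[\sigma]$, whose midpoint is the scaled half-point $m_\sigma\in\tfrac1k\mathcal H$; by definition, $\drk_k HP(p)$ is precisely the sum, over this same index set, of point masses of weight $\drk p[\sigma]$ at the points $m_\sigma$. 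Every point of $\tfrac1k\sigma$ lies within distance $\tfrac1{2k}$ of $m_\sigma$.

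Next, given a bounded continuous $h\colon\BR^N\to\BR$, I would invoke the hypothesis that $\bigcup_k \shd_k p_k$ is bounded: the closure of a unit-ball enlargement of this set is a compact set $K$ containing the supports of all the measures $\drk_k p_k$ and $\drk_k HP(p_k)$, so $h|_K$ has a modulus of continuity $\omega$ with $\omega(t)\to 0$ as $t\to 0^{+}$. Splitting both integrals over the (finitely many) segments and using $\sum_\sigma \drk p_k[\sigma]=1$ yields
$$\Bigl|\int h\,d\drk_k p_k-\int h\,d\drk_k HP(p_k)\Bigr|\;\le\;\sum_\sigma \drk p_k[\sigma]\sup_{y\in\frac1k\sigma}|h(y)-h(m_\sigma)|\;\le\;\omega\!\bigl(\tfrac1{2k}\bigr)\;\longrightarrow\;0 .$$
Since $\int h\,d\drk_k HP(p_k)\to\int h\,d\drk_\infty$ by hypothesis (the right side being the value of $h$ at the support point of $\drk_\infty$, times its total mass), it follows that $\int h\,d\drk_k p_k\to\int h\,d\drk_\infty$; as $h$ was arbitrary, $\drk_k p_k\to\drk_\infty$ weak-$*$.

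I do not expect a real obstacle. The one step that genuinely uses the hypotheses, and which one must be careful to state correctly, is the passage to a single modulus of continuity valid for every $k$ at once: this is exactly what the boundedness of $\bigcup_k\shd_k p_k$ provides, and without it the estimate $\omega(\tfrac1{2k})$ would not be available uniformly. As an alternative packaging, one could note that the above comparison gives a bound of order $1/k$ on the L\'evy--Prokhorov distance between $\drk_k p_k$ and $\drk_k HP(p_k)$ and then feed this into the weak-$*$ criterion proved earlier (testing only against closed balls); either route is routine.
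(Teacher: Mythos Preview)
Your proposal is correct and follows essentially the same approach as the paper: both arguments use the boundedness hypothesis to confine all supports to a single compact set, invoke uniform continuity of the test function there, and then compare $\drk_k p_k$ to $\drk_k HP(p_k)$ segment-by-segment (uniform mass on a length-$1/k$ segment versus the same mass at its midpoint), summing to get an error bounded by a modulus-of-continuity term $\omega(1/(2k))$ that tends to zero. Your write-up is in fact a bit cleaner than the paper's, which phrases the uniform-continuity step somewhat loosely as ``absolute continuity''.
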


\begin{proof}
Let $h: \BR^N \to \BR$ be a bounded continuous function. Let $Q = \mathbb{E} (\drk_{\infty})$. We have that $$\lim_{k \to \infty} \int_{\BR^N} h \drk_k HP(p_k) = h(Q) $$
The supports of all the measures $\drk_k p_k$, and $\drk_k HP(p_k)$ lie in some compact set $X \subset \BR^N$. By absolute continuity, for every $\epsilon > 0$, there is a $\delta > 0$ such that whenever $y \in \BR^N$, and $\sigma$ is a segment containing $y$, of length less than $\delta$, it's true that $|\int_{\sigma}h ds - h(y)  | < \epsilon \cdot \textup{length}(\sigma)$ where $ds$ is a Lebesgue measure on $\sigma$ of total measure $\textup{length}(\sigma)$.

Thus, fixing a value of $\epsilon > 0$, if $\frac{1}{k} < \delta$ and $p$ is any path such that $\shd_k p \subset X$, we have  that $$|\int_{\BR^N} h \drk_k p - \int_{\BR^N} h \drk_k HP(p)| < \epsilon$$

\nid Since $\epsilon$ was arbitrary, we get $\lim_{k \to \infty} \int_{\BR^N} h \drk_k p_k = h(Q)$, as required.
\end{proof}

Notice that to apply this Lemma, we will need the fact that for any path $p$, the set $\bigcup_{k=1}^{\infty} \shd_k \phi^k(p)$ is bounded. This follows immediately from Corollary \ref{shadmap}.

Fix a train track representative $G, \phi$ for $f$. Suppose that $G$ has $N$ edges, and let $l: E(G) \to \BR_+$ be any length function.
Let $p \in \mP(G)$, and  write $p = p_1 \ldots p_s$ as a concatenations of directed edges. Suppose that the edges $p$ passes through are $e_1, \ldots, e_s$. Suppose that the graph $HP_\phi$ has $M$ edges. Let $X = \biguplus \BR HP_{\phi}$. For every $1 \leq i \leq M$, let $X^i$ denote the $i$-th copy of $\BR HP_{\phi}$. Extend the function $H$ defined in section 3.1.3 to a function $H_p: X \to \BR^N$ by setting  by setting: $H_p|_{X_i} = \wh{p_1 \cdots p_{i -1}} + H$. Define functions  $c_k$ on $X$ by setting, for $x \in X^i$: $$c_k(x) = \#\{q \in \mP_k(HP_\phi, p_i) | \wh{q} = x\}$$
Given such an $x$, the point $H(x)$ is the a half point in $\BR^N$, corresponding to the edge $e(x) \in E(G)$. Define $l(x) = l\big( e(x)\big)$.  Define a measure $\nu_k$ on $X$ by setting, for any $x \in X$:

$$\nu_k(x) = \frac{c_k(x) l(x)}{l(\phi^k(p))} $$
Following the definitions, we get $\drk HP \big(\phi^k(p) \big) = H_{p*} \nu_k$.

\subsubsection{Train track length functions and random walks on graphs}
Now fix $l: E(G) \to \BR_+$ be the train track length function. For this function, the measures $\nu_k$ are simpler to describe. Fix a path $p = p_1 \cdot \ldots \cdot p_s$. For any $k$, and any path $q$, we have that  $l\big(\phi^k(q)\big) = \rho^k l \big( q\big)$. Thus:

$$\nu_k = \sum_{i = 1}^{s} \frac{l\big(\phi^k(p_i)\big)}{l\big(\phi^k(p) \big)}\big(H_p|_{X^i} \big)_* \drk HP \big(\phi^k(p_i) \big) =  \sum_{i = 1}^{s} \frac{l\big(p_i\big)}{l\big(p \big)}\big(H_p|_{X^i} \big)_* \drk HP \big(\phi^k(p_i) \big)$$

 Thus $\nu_k$ is a convex combination of the measures  $\big(H_p|_{X^i} \big)_* \drk HP \big(\phi^k(p_i) \big)$. We will study each of these separately. Henceforth, assume $s = 1$, so $p = p_1$, and $p_1$ corresponds to the edge $\eta \in E(G)$.

 \begin{prop} \label{halfpointconverge1}Let $G,\phi, l$ be as above.  Then there exists a point $\overline{\drk_{\infty} \phi} \in \BQ[\rho(\phi)]^N$ such that for any path $p = p_1$ of length $1$, we have that  $$\lim_{k\to \infty} \drk_k \phi^k p = \drk_\infty \phi $$
 in the weak-* topology, where $\drk_\infty \phi$ is a point measure supported at $\overline{\drk_\infty \phi}$.
 \end{prop}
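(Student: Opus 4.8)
### Proof proposal

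The plan is to reduce the statement to a convergence statement about weighted path-counts in the Perron--Frobenius graph $HP_\phi$, and then invoke the spectral structure of $HP_\phi$ together with the fact that the train track length function is a $\rho$-eigenvector. Concretely: by the lemma on half-point darkness measures it suffices to prove the asserted weak-* convergence for the measures $\drk_k HP(\phi^k p)$, which (by the discussion following the definition of $\nu_k$) equal $H_{p*}\nu_k$ with $p=p_1$ of length $1$ corresponding to an edge $\eta$. So I would study $\nu_k$, a measure on $\BR HP_\phi$ assigning to each $x$ the mass $\frac{c_k(x)\, l(x)}{l(\phi^k p)}$, where $c_k(x)$ counts paths $q\in\mP_k(HP_\phi,v_\eta)$ with $\wh q = x$ and $l(\phi^k p)=\rho^k l(p)$.

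First I would rewrite $\int h\, d\nu_k$ as a sum over all paths $q$ of length $k$ in $HP_\phi$ starting at $v_\eta$: up to the denominator $\rho^k l(p)$ it is $\sum_{q} l(e(q))\, h\big(H(\wh q)\big)$, where $e(q)$ is the terminal edge type. The key point is that $H(\wh q)$, by Lemma \ref{pathsandhp}, is (a translate of) the endpoint of the parametrized shadow, hence $\frac1k H(\wh q)$ lies within $O(1/k)$ of $\shd_k\phi^k\eta$, which by Corollary \ref{shadmap} converges to the bounded set $\shd_\infty\phi$. Thus all the atoms of $\drk_k HP(\phi^k p)$ lie in a fixed compact set. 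To pin down the limit point $\overline{\drk_\infty\phi}$, I would compute the barycenter $\int x\, d\nu_k$ (equivalently, apply the above to the coordinate functions, after rescaling by $1/k$): the numerator becomes $\sum_q l(e(q))\,\wh q$, a sum that can be evaluated by iterating the transition matrix of $HP_\phi$ against the length vector. Because $l$ restricted to $E(G)$ is the $\rho$-eigenvector of the train-track transition matrix, and the adjacency matrix of $HP_\phi$ is conjugate/compatible with that transition matrix (Lemma \ref{perronfrob} and its proof), the weighted sums grow like $\rho^k$ times a Perron eigenvector contribution, so the normalized barycenter $\frac1{k\rho^k l(p)}\sum_q l(e(q))\wh q$ converges to an explicit vector with coordinates in $\BQ[\rho]$. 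Independence of the starting edge $\eta$ follows from uniqueness (up to scale) of the Perron eigenvector, exactly as the limit in Proposition \ref{shapeofpaths} was independent of $v_0$.

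The substance is then to upgrade "barycenters converge and supports stay in a fixed compact set" to "the measures converge weak-* to a point mass." For this I would show the variance (second moments about the mean) of $\frac1k H_{p*}\nu_k$ tends to $0$. This is the same kind of computation: the second moment is $\frac{1}{k^2\rho^k l(p)}\sum_q l(e(q)) \|\wh q - \text{(mean)}\|^2$, and one shows each coordinate of $\frac1k\wh q$ concentrates. The cleanest route is to appeal to the renewal/Perron--Frobenius structure directly: a uniformly random length-$k$ path in a Perron--Frobenius graph, weighted appropriately, visits each edge type a proportion of the time that converges (with fluctuations $O(\sqrt k)$, hence $o(k)$ after the $1/k$ rescaling) to the frequency prescribed by the left and right Perron eigenvectors — a law-of-large-numbers for Markov chains. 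Combined with the criterion for weak-* convergence to a point mass (the lemma in Section 2.5, which reduces matters to convergence of $\mu_k[B]$ on balls, implied by concentration of the mean plus vanishing variance), this yields the claim.

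The main obstacle I anticipate is making the "law of large numbers for the edge-visit statistics of $\phi^k\eta$" precise in the homological setting: the quantity $H(\wh q)$ is not literally the vector of edge-visit counts of a random walk on $HP_\phi$, but rather a linear image of it twisted by the translation data $\wh{p_1\cdots p_{i-1}}$ and by the half-point offsets $H$, so one must verify that this linear map does not destroy concentration (it cannot increase variance by more than its operator norm squared, which is bounded). Equivalently, one must track that the "noise" in both the Perron--Frobenius count and in the bounded-cancellation/legality passage is $o(k)$, so that after dividing by $k$ everything collapses to a point. Once that bookkeeping is done, rationality of the coordinates of $\overline{\drk_\infty\phi}$ in $\BQ[\rho]$ is immediate, since the limiting frequencies are ratios of entries of Perron eigenvectors of an integer matrix, and $H$ has integer entries.
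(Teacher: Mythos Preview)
Your proposal is correct in outline and would succeed, but it takes a different route from the paper's proof. Both arguments begin with the same reduction: since $l$ is the train track length function, the measure $\nu_k$ is \emph{exactly} the law of $\wh{q}$ where $q$ is a length-$k$ random walk on $HP_\phi$ with transition probabilities $\mu(h)=l(e)/l(\phi(d))$ for an edge $h$ from $v_d$ to $v_e$; this is because the telescoping product $\prod \mu(h_i)$ collapses to $l(e_{k+1})/(\rho^k l(e_1))$. From here the approaches diverge. You propose a moment method: compute the barycenter (which stabilizes by Perron--Frobenius) and show the variance of $\tfrac1k\wh q$ is $O(1/k)$ via the standard law of large numbers for finite ergodic Markov chains, then conclude by the ball-criterion for weak-* convergence to a point mass. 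The paper instead embeds the walk into the two-sided shift space $\mathfrak{X}$ of bi-infinite paths with its natural shift-invariant measure, observes that the coordinates of $\tfrac1k\Psi_k$ are Birkhoff averages of indicator functions $\chi_0^e$, and invokes Birkhoff's ergodic theorem directly to get almost-sure convergence to the constant vector $Q=(\mu(e))_e$; weak-* convergence of $\tilde\nu_k$ then follows immediately from almost-sure convergence. Your approach is more elementary and self-contained (no measure-theoretic shift space, no ergodic theorem), at the cost of an explicit second-moment computation; the paper's approach is shorter once the ergodic machinery is in place and makes the limiting value $Q=(\mu(e))_e$ transparent without any computation of moments.

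One minor remark: the ``obstacle'' you flag at the end --- the translation terms $\wh{p_1\cdots p_{i-1}}$ --- is not actually present in this proposition, since $p=p_1$ has length $1$ so $s=1$ and $H_p=H$ is simply linear. The general-$p$ case is handled afterward in the paper (Corollary \ref{halfpointconverge2}) as a convex combination.
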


 \begin{proof}
 Given vertices $v_d, v_e \in E(HP_\phi)$,  each edge $h$ connecting $v_d$ to $v_e$ corresponds to a half point in $x \in HP(\phi(d))$ such that $e(x) = e$. Let $\mu(h) = \frac{l(e)}{l(\phi(d))} $

\nid Notice that $\mu$ defines a probability measure on the outgoing edges of each vertex in $HP_\phi$. Given an integer $k$, we can use $\mu$ to define the random walk  measure $\mu_k$ on $\mP_k(HP_\phi)$. This is given by $\mu_k(h_1 \ldots h_k) = \prod_{i=1}^k \mu(h_i)$. Suppose $h_i$ connects the vertex $v_{e_i}$ to the vertex $v_{e_{i+1}}$ . Since $\phi$ multiplies lengths of all paths by $\rho$, we have that:

$$\mu_k(h_1 \ldots h_k) = \frac{\prod_{i=1}^k l(e_{i+1})}{\prod_{i=1}^k l \rho (e_{i})} = \frac{l(e_{k+1})}{\rho^k l(e_1)} = \frac{l(e_{k+1})}{l(\phi^k(e_1))}$$

\nid Let $\mu_k^{res}$ be the probability measure $\mu_k$ restricted to the set $\mP_k(HP_\phi, v_{\eta})$. From the above equation, we get by definition that $\nu_k = \big(\wh{\cdot}\big)_* \mu_k^{res}$, where  $\textup{ } \wh{\cdot} \textup{ }$  is viewed as a function from $\mP_k(HP_\phi,v_{\eta})$ to $\BZ^M$.

Let $\mathfrak{X}$ be the set of bi-infinite indexed paths in $HP_\phi$, that is: $\mathfrak{X}$ is the set of all functions $p: \BZ \to E(HP_\phi)$ such that for each $i \in \BZ$, we have that $p(i) p(i+1)$ is a path in $HP_\phi$. A \emph{cylindrical} set in $\mathfrak{X}$  is a set of the form $C(\underline{i},\underline{e}) = \{p \in \mathfrak{X} | p(i_j) = e_j, \forall 1 \leq j \leq l  \} $, where $\underline{i} = (i_1, \ldots, i_l)$ is an increasing sequence of length $l$ ,in $\BZ$ and $\underline{e} = (e_1, \ldots, e_l) \in E(HP_\phi)^l$. Let $\Gamma$ be the $\sigma$-algebra generated by all cylindrical sets.

Define a measure $\mu$ on $\mathfrak{X}$  on sets in $\Gamma$ in the following way. Let $C(\underline{i},\underline{e})$  as above. Let $k = i_l - i_1$. Let $$\mu\big(C(\underline{i},\underline{e})\big) = \mu_k\big(\{p \in \mP_k(HP_\phi) | p(i_j - i_i) = e_j, \forall 1 \leq j \leq l  \} \big) $$

It is a standard fact that such a definition gives rise to a well define measure of finite mass on $\mathfrak{X}$. Let $s: \BZ \to \BZ$ be given by $s(x) = x+1$. Let  $T: \mathfrak{X} \to \mathfrak{X}$ be given by $T(p) = p \circ s$. It is clear that the measure $\mu$ is $T$ invariant, and is standard that $T$ is ergodic with respect to $\mu$.

Given an edge $e \in E(HP_\phi)$, and a number $i \in \BZ$, let $\chi_i^e = 1_{C(e,i)}$. Notice that $\chi_i^e = f^e_0 \circ T^i $. Let $\Psi_k: \mathfrak{X} \to \BR^M$ be given by $$\Psi_k(p) = \big( \sum_{i = 1}^k \chi_i^e \big)_{e \in E(HP_\phi)} (p) =  \big( \sum_{i = 1}^k \chi_0^e \circ T^i \big)_{e \in E(HP_\phi)} (p)$$ Given a path $p' \in \mP_k(HP_\phi)$, choose a path $p \in \mathfrak{X}$ such that for every $1 \leq i \leq k$, it's true that $p'(i) = p(i)$. We have that: $\wh{p'} = \Psi_k(p)$.

Furthermore, if we set $\mu^{res}$ to be the measure $\mu$ restriced to the set $\{p \in \mathfrak{X} | \iota(p(1)) = v_{\eta} \}$, we have by definition that $\nu_k = \Psi_{k*} \mu^{res}$.

Now, let $g_k: \BR^M \to \BR^M$ be given by $g_k(x) = \frac{x}{k}$, and  $\widetilde{\nu_k} = (g_k)_*\nu_k$. Then we have that:

$$\drk_k HP\big(\phi^k(p_1)\big) = H_*\widetilde{ \nu_k} =  \big[H_p \circ \Psi_k \big]_* \mu^{res} $$

\nid By Birkhoff's ergodic theorem, the functions $\frac{1}{k} \Psi_k$ converge almost surely as $k \to \infty$ to a $T$-invariant function $\Psi_\infty$. Since $T$ is ergodic, this function is constant $\mu$ almost everywhere. Call this constant $Q$. Since $\mu^{res}$ is absolutely continuous with respect to $\mu$, this function is equal to $Q$,  $\mu^{res}$ almost everywhere. This implies that for any $\epsilon > 0$ and $0 < C < 1$, it's true that for all sufficiently large value of $k$:  $\mu^{res} \big[\frac{1}{k} \Psi_k^{-1} B_\epsilon(Q) \big] > C$, and thus $\tilde{\nu_k} \big[B_\epsilon(Q) \big] > C$. Similarly, for $P \neq Q$, for all sufficiently large values of $k$ it's true that $\tilde{\nu_k} \big[B_\epsilon(P) \big]< C$. This immediately implies that the measures $\tilde{\nu_k}$ weak-* converge to a point measure supported at $Q$, and thus the measures $\drk_k HP\big(\phi^k(p_1)\big)$ weak-* converge to a point measure supported at $H(Q)$.

To calculate the point $Q$, notice that for every $i$, and every $e \in E(HP_\phi)$:
$\mathbb{E}(\chi^e_i) = \mu(e)$. Thus:
$$\mathbb{E}\big(\frac{1}{k} \Psi_k) = \frac{1}{k} \sum_{i=1}^k \mu(e) = \mu(e) $$

So $\Psi_\infty = \big(\mu(e) \big)_{e \in E(HP_\phi)}$, almost everywhere. By definition, $\mu(e)$ is in the field generated by $\rho$ and the lengths of the edges in $G$. Since these lengths are simply coordinates of a $\rho$-eigenvector of a matrix with coefficients in $\BZ$, they belong to the field $\BQ[\rho]$, as required.

\end{proof}

Since the measure provided by Proposition \ref{halfpointconverge1} doesn't depend on the choice of the initial path $p$, we get the following.

\begin{corollary} \label{halfpointconverge2} Let $G,\phi, l$ be as above.  Then there exists a point $\overline{\drk_{\infty} \phi} \in \BQ[\rho(\phi)]^N$ such that for any path $p$ of length $1$, we have that  $$\lim_{k\to \infty} \drk_k \phi^k p = \drk_\infty \phi $$
 in the weak-* topology where $\drk_\infty \phi$ is a point measure supported at $\overline{\drk_\infty \phi}$.

\end{corollary}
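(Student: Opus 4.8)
The plan is to read the Corollary off the \emph{proof} of Proposition \ref{halfpointconverge1}, supplemented by the lemma that compares half-point darkness measures with genuine ones. First I would revisit that proof and record the fact, implicit there, that the limiting point does not depend on the chosen length-one path. Concretely, the proof fixes an edge $\eta \in E(G)$ with $p = p_1$ the corresponding path, and computes the Birkhoff limit of $\frac{1}{k}\Psi_k$ to be the constant $Q = \big(\mu(e)\big)_{e \in E(HP_\phi)}$, $\mu$-almost everywhere, where $\mu$ is the shift-invariant measure on $\mathfrak{X}$. The only object in the argument that depends on $\eta$ is the restricted measure $\mu^{res}$, and since $\mu^{res}$ is absolutely continuous with respect to $\mu$, the limit of $\frac{1}{k}\Psi_k$ is still the same constant $Q$ almost everywhere with respect to $\mu^{res}$, for every choice of $\eta$. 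Therefore $H(Q)$ is one fixed point; one sets $\overline{\drk_\infty\phi} := H(Q)$, which by the end of that proof lies in $\BQ[\rho(\phi)]^N$.

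Second, I would promote the convergence that the proposition actually yields, namely $\drk_k HP(\phi^k(p)) \to \drk_\infty\phi$ via the identity $\drk_k HP(\phi^k(p)) = H_* \widetilde{\nu_k}$, to convergence of the true darkness measures. Here I would apply the lemma proved above relating $\drk_k HP(p_k)$ and $\drk_k p_k$ for a sequence of paths $\{p_k\}_{k=1}^\infty$ whose union $\bigcup_k \shd_k p_k$ is bounded, taking $p_k := \phi^k(p)$. Its first hypothesis is the output of the previous paragraph, and its second hypothesis --- boundedness of $\bigcup_{k \geq 1} \shd_k \phi^k(p)$ --- is immediate from Corollary \ref{shadmap}, which gives the stronger statement $\shd_k \phi^k(p) \to \shd_\infty\phi$ in the Hausdorff metric. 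The lemma then gives $\drk_k \phi^k(p) \to \drk_\infty\phi$ in the weak-* topology; since $\overline{\drk_\infty\phi}$ is independent of the length-one path $p$ by the first paragraph, this is exactly the Corollary.

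The step that deserves the most care --- and the one I regard as the only real content --- is the passage from $\mu$-a.e. to $\mu^{res}$-a.e. convergence in the first paragraph. One cannot apply the pointwise ergodic theorem to $\mu^{res}$ itself, since $\mu^{res}$ is a one-sided conditioning of $\mu$ and is not $T$-invariant; the correct move is to deduce $\mu^{res}$-almost-everywhere convergence of $\frac{1}{k}\Psi_k$ to the constant $Q$ from the $\mu$-almost-everywhere statement by absolute continuity alone. Everything else --- that convergence on all balls to a point mass implies weak-* convergence, the field-of-definition claim $\overline{\drk_\infty\phi} \in \BQ[\rho(\phi)]^N$, and the boundedness input to the half-point lemma --- is already in place from earlier in the section and only needs to be quoted.
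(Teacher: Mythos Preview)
Your proposal is correct and matches the paper's approach. The paper's entire argument is the single sentence preceding the corollary---``Since the measure provided by Proposition~\ref{halfpointconverge1} doesn't depend on the choice of the initial path $p$, we get the following''---which is exactly your first paragraph; your second paragraph (invoking the half-point-to-darkness lemma together with the boundedness from Corollary~\ref{shadmap}) makes explicit a step the paper leaves implicit inside the proof of Proposition~\ref{halfpointconverge1}.
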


We generalize this corollary, by allowing the length function $l$ to vary.

\begin{lemma} \label{changelengthfunction} Let $G,\phi, l, \drk_{\infty} \phi$ be as above. Let $l'$ be any length function on $E(G)$. Then for any path $p$:
$$\lim_{k\to \infty} \drk^{l'}_k \phi^k p = \drk_\infty \phi $$
\end{lemma}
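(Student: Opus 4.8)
Here is my plan for proving Lemma \ref{changelengthfunction}.

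\textbf{The approach.} The point is that once we know the half-point darkness measure converges for \emph{one} length function (the train-track length function $l$, handled by Corollary \ref{halfpointconverge2}), the result for an arbitrary length function $l'$ should follow because the two measures $\drk_k^l \phi^k p$ and $\drk_k^{l'} \phi^k p$ are supported on the \emph{same} set of segments $\sigma$, with weights that differ only in how much total mass is assigned to each edge-type. Concretely, both measures are convex combinations of normalized one-dimensional Lebesgue measures on the segments appearing in $\shd(\phi^k p)$; the difference between them is entirely encoded in the ratio $l'(e_\sigma)/l(e_\sigma)$ together with the renormalization by total length. I would exploit the fact that $f_{ab}$ has finite order — so that, by the reductions already in place, we may assume $\phi$ fixes $v_0$ — and that Corollary \ref{shadmap} guarantees $\bigcup_k \shd_k \phi^k(p)$ is bounded, so all the relevant measures live on a fixed compact set $X$.

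\textbf{Key steps.} First, I would reduce to paths of length $1$ exactly as in the run-up to Proposition \ref{halfpointconverge1}: writing $p = p_1 \cdots p_s$, the measure $\drk_k^{l'}\phi^k(p)$ is a convex combination $\sum_i \frac{l'(\phi^k(p_i))}{l'(\phi^k(p))}(\cdots)_*\drk HP(\phi^k(p_i))$, so it suffices to treat $p = p_1$ of length $1$, corresponding to an edge $\eta$. Second, I would write out the half-point measures explicitly: for a segment $\sigma$ of $\shd(\phi^k(\eta))$ corresponding to edge $e_\sigma \in E(G)$, one has $\drk^{l'} HP(\phi^k \eta)[\sigma] = \frac{c_\sigma(\phi^k\eta)\, l'(e_\sigma)}{l'(\phi^k\eta)}$ and likewise with $l$. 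Here $c_\sigma$ counts traversals and is independent of the length function. Third, I would relate the two normalizing denominators: $l'(\phi^k\eta) = \sum_{e \in E(G)} (\text{number of times } \phi^k\eta \text{ crosses } e)\, l'(e) = \sum_e N_e(k)\, l'(e)$, where $N_e(k)$ is the $e$-entry of $T^k \wh\eta$ (with $T$ the transition matrix); since $T$ is Perron–Frobenius with eigenvalue $\rho$ and positive eigenvector $l$, we have $N_e(k)/\rho^k \to c\, l(e)$ for a constant $c > 0$ depending only on $\eta$, by the Perron–Frobenius theorem. Hence $l'(\phi^k\eta)/l(\phi^k\eta) = \big(\sum_e N_e(k) l'(e)\big)/\big(\rho^k l(\eta)\big) \to (c/l(\eta))\sum_e l(e)l'(e)$, a positive constant $\kappa$. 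Fourth, combining: for each segment $\sigma$,
$$\drk_k^{l'} HP(\phi^k\eta)[\sigma] = \frac{l'(e_\sigma)}{l(e_\sigma)}\cdot\frac{l(\phi^k\eta)}{l'(\phi^k\eta)}\cdot \drk_k^{l} HP(\phi^k\eta)[\sigma].$$
The factor $l(\phi^k\eta)/l'(\phi^k\eta) \to 1/\kappa$ is independent of $\sigma$, and $l'(e_\sigma)/l(e_\sigma)$ is a bounded function of $\sigma$ (finitely many edge-types). Since by Corollary \ref{halfpointconverge2} the measures $\drk_k^l HP(\phi^k\eta)$ weak-$*$ converge to a point mass at $H(Q)$ with $Q = (\mu(e))_e$, and multiplying a sequence of measures converging to a point mass by a uniformly bounded reweighting that converges appropriately along each segment still produces a sequence converging to a point mass (the total mass stays $1$ by construction, and mass escaping to any region away from $H(Q)$ is killed), I would conclude $\drk_k^{l'} HP(\phi^k\eta) \to$ a point mass at the \emph{same} point $H(Q)$. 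Finally, the lemma that passes from half-point measures to darkness measures (the unlabeled lemma preceding Proposition \ref{halfpointconverge1}) upgrades this to $\drk_k^{l'}\phi^k p \to \drk_\infty\phi$, using boundedness of $\bigcup_k \shd_k\phi^k(p)$.

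\textbf{The main obstacle.} The genuinely delicate point is the last convergence argument: multiplying a weak-$*$ convergent sequence of measures by a position-dependent density does not in general preserve the limit, so I must use that the density $\sigma \mapsto \frac{l'(e_\sigma)}{l(e_\sigma)}\cdot\frac{l(\phi^k\eta)}{l'(\phi^k\eta)}$ is (i) uniformly bounded above and below by positive constants, and (ii) that the limit measure is a single atom, so that the reweighting only matters "at the atom." The cleanest way to make this rigorous is to argue directly with the random-walk / ergodic-theory setup of Proposition \ref{halfpointconverge1}: the measure $\drk_k^{l'}HP(\phi^k\eta)$ is $\Psi_{k*}$ of a reweighted version of $\mu^{res}$, namely $\mu^{res}$ multiplied by the Radon–Nikodym density $\prod_i \frac{l'(e_{i+1})}{l(e_{i+1})}\cdot\frac{\rho l(e_i)}{l'(\phi(e_i))}$ along the walk — and one checks this density stays uniformly bounded, so the reweighted measures are mutually absolutely continuous with uniformly bounded densities and Birkhoff's theorem still forces $\frac1k\Psi_k \to Q$ almost surely for the reweighted family as well, giving the same atom $H(Q)$. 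I would present the argument in that language to avoid the trap. Everything else is bookkeeping with the Perron–Frobenius theorem and convex combinations.
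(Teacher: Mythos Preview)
Your observations (i) and (ii) in the ``main obstacle'' paragraph already constitute a complete proof, and they are exactly the paper's argument: since $E(G)$ is finite there is $C>0$ with $\tfrac{1}{C}l'(q) \leq l(q) \leq C\,l'(q)$ for every path $q$, whence $\tfrac{1}{C^2}\nu_k'(x) \leq \nu_k(x) \leq C^2\nu_k'(x)$ for every point $x$; because $\nu_k$ weak-$*$ converges to a point mass, this two-sided bound forces $\nu_k'$ to converge to the same point mass. That is the entire proof in the paper --- two lines. Your Perron--Frobenius computation of the exact limiting ratio $\kappa = \lim l'(\phi^k\eta)/l(\phi^k\eta)$ and the reduction to length-$1$ paths are correct but superfluous; only uniform boundedness of the density is needed, and that comes straight from bi-Lipschitz equivalence.

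The proposed ``cleanest'' detour through the ergodic setup, however, contains an error. The Radon--Nikodym density of $\nu_k'$ against $\nu_k$ is the two-factor expression you already found in your fourth step, $\frac{l'(e_\sigma)}{l(e_\sigma)}\cdot\frac{l(\phi^k\eta)}{l'(\phi^k\eta)}$, which depends only on the terminal edge-type and on $k$; it is \emph{not} a product along the walk. The product you wrote, $\prod_i \frac{l'(e_{i+1})}{l(e_{i+1})}\cdot\frac{\rho\, l(e_i)}{l'(\phi(e_i))}$, is the density of the $l'$-random-walk measure against the $l$-random-walk measure, but $\nu_k'$ is not the pushforward of the $l'$-random walk: the telescoping that made this identification work for $l$ used $l(\phi(e))=\rho\,l(e)$, which fails for a general $l'$. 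Worse, a product of $k$ factors each bounded away from $1$ need not stay bounded as $k\to\infty$ (and one can check it does not in simple examples), so ``one checks this density stays uniformly bounded'' is false. Drop that paragraph: your fourth step together with (i)--(ii) is already the proof.
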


\begin{proof} There is a constant $C > 0$ such that for any path $q \in \mP(G)$, we have that $$\frac{1}{C}l'(q) \leq l(p) \leq Cl'(q)$$
If we set $\nu_k$ to be the measure described above with respect to $l$, and $\nu_k'$ to be the same measure with respect to $l'$, we have that for every point x:
$$\frac{1}{C^2} \nu_k'(x) \leq \nu_k (x) \leq C^2 \nu_k(x)$$

\nid This immediately implies the result.

\end{proof}

\subsubsection{Switching from the train track graph to $\mathcal{R}_n$.} Let $T$, $A$,  $R_A$, and $f_A$ be the tree, set, map and automorphism defined in section 3.1.4.  We can view $l$ as a length function on $\mathcal{R}_n$.  Extend $l$  to a length function $l: E(G) \to \BR_+$.

\nid Given any immersed loop $p \in \mP(G,v_0)$, we have that $l \big(R_A(p) \big) >0$. Write $\drk p = \drk^A p + \drk^T p$, where $ \drk^A p $ is supported on shadows of edges in $A$, and $ \drk^T p$ is supported on shadows of edges in $T$. Define, similarly, $\drk^A_k p$ and $\drk^T_k p$ Let $m^A(p), m^T(p)$ be the total masses of $ \drk^A p $, and $ \drk^T p $ respectively.   By definition: $$\drk R_A(p) = \frac{1}{m^A(p)} R_{A*} \drk^A p  $$

Now add the assumptions that $p$ is legal. For every edge $e \in E(G)$, let $n_e$ be the number of times $p$ traverses this edge (in either direction.)  For any $k$ we have that:

$$\drk_k R_A\big( \phi^k(p) \big) =\frac{1}{m^A(\phi^k p)} R_{A*}\big[ \drk^A_k \phi^k(p) \big]   $$

Let $\Theta$ be the transition matrix for $\phi$, and let $v$ be its Perron Frobenius eigenvector. Each coordinate of $v = \big(v_e \big)_{e \in E(G)}$ is positive. Let $\| v\|_A = \sum_{e \in A} |v_e|$. Notice that $\|v\|_A \neq 0$.  Let $\mu^A = \sum_{e \in A} l(e)$, $\mu = \sum_{e \in E(G)} l(e)$. For any $k$, the coordinates of the vector $\Theta^k (n_e)_{e \in E(G)}$ give the number of times $\phi^k(p)$ passes through each edge of $E(G)$.   For all sufficiently high values of $k$,  the vectors $\Theta^k (n_e)_{e \in E(G)}$ are in the positive orthant, and hence the sequence $\{\Theta^k (n_e)_{e \in E(G)}\}_{k=1}^{\infty}$ converges projectively to $v$. Thus:

$$\lim_{k \to \infty} m^A (\phi^k(p)) = \frac{\mu^A \cdot \| v\|_A}{\mu \cdot \|v\|_1} > 0 $$

Thus, $\exists C > 0$ such that for all sufficiently large $k$: $m^A(\phi^k(p)) > C$. Now, since $\drk_k \phi^k(p) = \drk^A_k \phi^k(p) + \drk^T_k \phi^k(p)$, and $\lim_{k \to \infty} \drk_k \phi^k(p) = \drk_{\infty} \phi$ is a measure supported at a point, we must have that the measures $\drk^A \phi^k(p)$ converge to a point measure supported at the same point. Thus,

$$\lim_{k \to \infty} \drk_k f_A^k(p) = R_{A*} \drk_{\infty} \phi $$

and the right hand side of the above equality is a measure supported at a point in $\BQ [\rho]$. Now suppose that $p$ has infinite $\phi$ orbit, but remove the assumption that it is legal. Notice that now, $\drk_k f_A^k(p) = R_{A*} \drk_k^A [\phi^k(p)]^{red}$.  Replacing $p$ with $(\phi^M p)^{red}$ for a sufficiently large $M$, we get that $p$ contains arbitrarily long legal subpaths. Thus, we can assume that $p$ contains a legal subpath $q$ that is a circuit, such that $\phi^k(q)$ is a subpath of $\phi^k(p)^{red}$ for any $k$.
Given any path $\pi \in \mP(G)$, a subpath $\pi'$  of $\pi^{red}$ and a set $B \subset \BR^N$,   we have that:

$$ \label{estimate} \frac{l(\pi')}{l(\pi^{red})}\drk \pi' \big[ B\big] \leq \drk \pi^{red}  \big[ B\big] \leq \frac{l(\pi)}{l(\pi^{red})} \drk \pi \big[B \big]  $$

\nid in particular, for any $k$:

$$\frac{l(\phi^{k} q)}{l\big((\phi^k p)^{red}\big)} \drk_k \phi^k q \big[B \big] \leq \drk_k (\phi^k p)^{red} \big[ B \big] \leq  \frac{l \big(\phi^k p\big)}{l\big((\phi^k p)^{red}\big)} \drk_k \phi^k p \big[ B\big]$$

\begin{lemma} \label{ratios} In the notation above, there exists a constant $C > 0$ such that for every $k$: it's true that $$\frac{l(\phi^{k} q)}{l\big((\phi^k p)^{red}\big)} \geq C, \frac{l \big(\phi^k p\big)}{l\big((\phi^k p)^{red}\big)} \geq C$$

\end{lemma}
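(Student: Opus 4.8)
The plan is to separate the two claimed inequalities, the second being essentially automatic and the first reducing to a comparison of exponential growth rates governed by the train track length function.

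For the inequality $\frac{l(\phi^k p)}{l((\phi^k p)^{red})} \ge C$: the reduced path $(\phi^k p)^{red}$ is obtained from $\phi^k p$ by successively deleting backtracked subpaths $e\bar e$, and each such deletion strictly decreases the total $l$-length, so $l\big((\phi^k p)^{red}\big) \le l(\phi^k p)$ and this ratio is $\ge 1$ for every $k$; nothing more is needed.

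For the inequality $\frac{l(\phi^k q)}{l((\phi^k p)^{red})} \ge C$ I would proceed in three steps. (i) Because $q$ is legal, the defining property of a train track representative makes $\phi^k(q)$ legal, hence immersed, hence already reduced; in particular the image of the legal circuit $q$ under $\phi^k$ requires no cancellation, and $\phi^k(q)$ is by construction a subpath of $(\phi^k p)^{red}$, so $l(\phi^k q) \le l\big((\phi^k p)^{red}\big) \le l(\phi^k p)$. It thus suffices to bound $\frac{l(\phi^k q)}{l(\phi^k p)}$ below, uniformly in $k$. (ii) Pass to the train track length function $l_{\textup{tt}}$, the $\rho$-eigenvector of the transition matrix of $\phi$. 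Since $l_{\textup{tt}}(\phi(\pi)) = \rho\, l_{\textup{tt}}(\pi)$ for \emph{every} path $\pi$ — legal or not — induction gives $l_{\textup{tt}}(\phi^k q) = \rho^k l_{\textup{tt}}(q)$ and $l_{\textup{tt}}(\phi^k p) = \rho^k l_{\textup{tt}}(p)$, so $\frac{l_{\textup{tt}}(\phi^k q)}{l_{\textup{tt}}(\phi^k p)}$ equals the constant $\frac{l_{\textup{tt}}(q)}{l_{\textup{tt}}(p)} > 0$ (here $q$ is a nonempty circuit and $p$ is nonempty, so both numerator and denominator are positive). (iii) Observe that $l$ and $l_{\textup{tt}}$ are bi-Lipschitz comparable on all paths: since each is additive over edges, $K^{-1} l_{\textup{tt}}(\pi) \le l(\pi) \le K\, l_{\textup{tt}}(\pi)$ for every path $\pi$, where $K = \max_{e\in E(G)} \max\{\, l(e)/l_{\textup{tt}}(e),\ l_{\textup{tt}}(e)/l(e)\,\}$. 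Combining (i)--(iii),
$$ \frac{l(\phi^k q)}{l\big((\phi^k p)^{red}\big)} \ \ge\ \frac{l(\phi^k q)}{l(\phi^k p)} \ \ge\ K^{-2}\,\frac{l_{\textup{tt}}(q)}{l_{\textup{tt}}(p)} \ >\ 0, $$
and one takes $C = \min\{\,1,\ K^{-2} l_{\textup{tt}}(q)/l_{\textup{tt}}(p)\,\}$.

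I do not anticipate a genuine obstacle: the lemma is a routine estimate, and the only points needing care are remembering that images of legal paths under the train-track map never require reduction (so that ``$\phi^k(q)$ is a subpath of $(\phi^k p)^{red}$'' yields a clean length inequality) and using the \emph{exact} scaling $l_{\textup{tt}}(\phi(\pi)) = \rho\, l_{\textup{tt}}(\pi)$ for arbitrary $\pi$, which makes the $l_{\textup{tt}}$-ratio literally constant in $k$ rather than merely asymptotically so. If one preferred not to introduce $l_{\textup{tt}}$, the same conclusion follows directly from Perron--Frobenius: writing $l(\phi^k q) = \langle \Theta^k m, l\rangle$ and $l(\phi^k p) = \langle \Theta^k n, l\rangle$ with $\Theta$ the transition matrix and $m, n$ the (unsigned) edge-count vectors of $q$ and $p$, the normalized vectors $\rho^{-k}\Theta^k m$ and $\rho^{-k}\Theta^k n$ both converge to positive multiples of the Perron--Frobenius eigenvector, so the ratio converges to a positive limit and is in particular bounded below by a positive constant.
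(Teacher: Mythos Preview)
Your proposal is correct and follows essentially the same approach as the paper: both arguments use the sandwich $l(\phi^k q)\le l\big((\phi^k p)^{red}\big)\le l(\phi^k p)$, the exact scaling $l_{\textup{tt}}(\phi^k\pi)=\rho^k l_{\textup{tt}}(\pi)$ for the train track length function to make the relevant ratio constant in $k$, and then a bi-Lipschitz comparison to pass to an arbitrary length function. Your write-up is slightly more detailed in separating the two inequalities and in spelling out the bi-Lipschitz constant, and your closing Perron--Frobenius alternative is a nice extra, but the core argument is the same.
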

\begin{proof}

Suppose first that $l$ is the train length function.  In this case: $l\big( \phi^k p \big) = \rho^k l\big(p\big)$, and $l\big( \phi^k q \big) = \rho^k l\big(q\big)$. Furthermore, we have that $$l\big( \phi^k q \big) \leq \big((\phi^k p)^{red} \big) \leq l\big( \phi^k p \big)$$
and the result follows immediately for this case. Now suppose we choose a different length function, $l'$. Since the metrics given by $l'$ and $l$ are bi-Lipschitz equivalent, we that there exists a constant $D>0$ such that for every path $r$, it's true that $\frac{1}{D}l'(r) \leq l(r) \leq D l'(r)$ and thus:
$$\frac{l'(\phi^{k} q)}{l'\big((\phi^k p)^{red}\big)} \geq D^2 \frac{l(\phi^{k} q)}{l\big((\phi^k p)^{red}\big)} $$
\nid and
$$\frac{l' \big(\phi^k p\big)}{l' \big((\phi^k p)^{red}\big)} \geq D^2 \frac{l \big(\phi^k p\big)}{l\big((\phi^k p)^{red}\big)} $$

 \nid and the result now follows for the general case.
\end{proof}

 Now, since $\lim_{k\to \infty} \drk_k \phi^k q = \lim_{k \to \infty} \drk_k \phi^k p = \drk_{\infty} \phi$, Lemma \ref{ratios} gives that for any open ball $B$, it's true that:
$$\frac{1}{C} \drk_k \phi^k q \big[B \big] \leq \drk_k (\phi^k p)^{red} \big[ B \big] \leq C \drk_k \phi^k p \big[ B\big]$$
and thus, in the weak-* topology: $\lim_{k \to \infty} \drk_k (\phi^{k})^{red}= \drk_{\infty} \phi$.
Since $m^A \big(\drk^A_k \phi^k(q)  \big)  \leq m^A \big(\drk^A_k (\phi^k p)^{red}  \big) $, then the same argument as the legal case gives that $$\lim_{k\to \infty} \drk_{k} f_A^k\big(R_A(p)\big) = \lim_{k\to \infty} R_{A*} \drk^A_k (\phi^k p)^{red} = R_{A*} \drk_{\infty} \phi$$

\nid We denote the point $\mathbb{E}\big(R_{A*} \drk_{\infty} \phi \big) = \overline{\drk_{\infty} f_A}$. Note that this point does not depend on the length function $l$.
\subsubsection{Behavior under conjugation.} We've now proved Theorem \ref{darkshadow} for the autmorphism $f_A$, which is conjugate to $f$. We want to prove it for $f$, as well as proving it more generally for $hfh^{-1}$, where $h \in \aut$. The following Proposition will complete the proof.

\begin{prop} Given the notation above, and any $h \in \aut$,  any length function $l$ on $E(\mathcal{R}_n)$, and any word $w$ with infinite $f$-orbit:  $$\lim_{k \to \infty} \drk_{k} hf^kh^{-1}(w) = h_{ab*} \drk_{\infty} f  $$
\end{prop}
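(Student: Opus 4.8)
The plan is to reduce the statement to the already-proven convergence $\drk_k f_A^k(R_A(p)) \to R_{A*}\drk_\infty\phi$ by using the bounded-cancellation machinery (Theorem \ref{thurston}) exactly as in Lemma \ref{conjugation}, but now at the level of darkness measures rather than shadows. The key observation is that $hf^kh^{-1} = h f_A^{k'} h^{-1}$ after possibly conjugating $f_A$ itself by an element $g$ (so $h$ here absorbs $g$); thus it suffices to prove: if $w$ has infinite $f_A$-orbit and we already know $\drk_k f_A^k(w) \to \drk_\infty f_A$ (a point measure at $\overline{\drk_\infty f_A} \in \BQ[\rho]^n$), then $\drk_k (h f_A^k h^{-1})(w) = \drk_k\, h(f_A^k(h^{-1}w)) \to h_{ab*}\drk_\infty f_A$. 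Setting $w' = h^{-1}(w)$, which still has infinite $f_A$-orbit, the problem becomes: given a sequence of words $u_k = f_A^k(w')$ with $\drk_k u_k \to$ (point mass at some $Q$), show $\drk_k h(u_k) \to$ (point mass at $h_{ab}Q$).

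The main technical step is an analogue of Lemma \ref{conjugation} for measures. First I would establish a \emph{word-length} version: by Theorem \ref{thurston} applied to $h$ and to $h^{-1}$, the map $w \mapsto h(w)$ is, up to additive bounded error in each coordinate, a $h_{ab}$-linear reparametrization of the parametrized shadow. More precisely, for a word $u = u_1\cdots u_m$ (letters), writing $\lceil u\rceil_j$ for the length-$j$ prefix, bounded cancellation gives a monotone reparametrization $j \mapsto \sigma(j)$ with $|\sigma(j) - (\text{const})\cdot j|$ bounded and $|\wh{h(\lceil u\rceil_j)} - h_{ab}\wh{\lceil u\rceil_j}|$ bounded uniformly in $j$ and $u$ (the constant being a ratio of average expansion factors). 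Since darkness $\drk u$ is the pushforward under $\oshd u$ of normalized Lebesgue measure on $[0,1]$, and rescaling by $1/k$ sends the uniformly-bounded errors to $0$, the measures $\drk_k h(u_k)$ and $(h_{ab})_* \drk_k u_k$ have the same weak-$*$ limit: for any ball $B$, the proportion of (reparametrized) time that $\oshd_k h(u_k)$ spends in $B$ differs from the proportion that $h_{ab}\oshd_k u_k$ spends in $B$ by an amount controlled by the (vanishing) boundary-layer near $\partial B$ plus the $O(1/k)$ positional error, using that the limit measure is a point mass (so $\partial B$ is charged measure zero for all but countably many radii). Then $(h_{ab})_*\drk_k u_k \to (h_{ab})_*(\text{point mass at }Q) = $ point mass at $h_{ab}Q$, which is the desired conclusion, and $h_{ab}Q = h_{ab}\overline{\drk_\infty f_A}$ lies in $\BQ[\rho]^n$ since $h_{ab}$ is an integer matrix.

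Concretely I would structure the proof as: (1) reduce via Lemma \ref{powers2} and the conjugation $f = g f_A g^{-1}$ to proving the statement for words $u_k = f_A^k(w')$ with $\drk_k u_k \to$ point mass at $Q := \overline{\drk_\infty f_A}$; (2) invoke the bounded-cancellation estimate to control $|\wh{h(\lceil u_k\rceil_j)} - h_{ab}\wh{\lceil u_k\rceil_j}|$ by a constant $C(h)$ and the length distortion $|h(u_k)| \big/ |u_k| \to$ a positive constant (using Theorem \ref{thurston} for $h$ and $h^{-1}$ to pinch this ratio, exactly as in Lemma \ref{ratios}); (3) deduce that $\drk_k h(u_k)$ and $(h_{ab})_*\drk_k u_k$ are uniformly close in the sense that for each fixed bounded continuous test function $\psi$ the integrals differ by $O(1/k) + o(1)$; (4) conclude weak-$*$ convergence to the point mass at $h_{ab}Q$ via the Lemma in \S2.6, and observe $h_{ab}Q \in \BQ[\rho]^n$. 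Finally, taking $h = g$ recovers Theorem \ref{darkshadow} for $f$ itself, with $\overline{\drk_\infty f} = g_{ab}\overline{\drk_\infty f_A}$, and the naturality statement $\overline{\drk_\infty(f^\phi)} = \phi_{ab}\overline{\drk_\infty f}$ follows by applying the Proposition with $h = \phi$.

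I expect step (3) to be the main obstacle: darkness measures are \emph{not} continuous under bounded perturbations of paths in general (a wildly oscillating correction term could spread mass), so one must genuinely use that (a) the perturbation is a bona fide reparametrization plus a uniformly bounded \emph{displacement} — not an arbitrary bounded modification of the path — so mass is \emph{transported} a distance $O(1/k)$ rather than redistributed, and (b) the target is a point measure, which makes the transport estimate immediately upgrade to weak-$*$ convergence without needing any equicontinuity of the limit. Writing this transport estimate carefully — identifying the reparametrization $\sigma$ from the prefix-length comparison and bounding $d\big(\oshd_k h(u_k)(\sigma(t)/|h(u_k)|),\, h_{ab}\oshd_k u_k(t)\big)$ uniformly — is the heart of the argument; everything else is bookkeeping parallel to the already-proven shadow case.
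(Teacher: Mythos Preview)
Your overall strategy---reduce to the known convergence for $f_A$ and then use bounded cancellation to transport the darkness measure across the conjugating automorphism $h$---is exactly the paper's. The point you correctly flag as decisive, that the limit being a \emph{point} mass is what lets a merely multiplicative comparison upgrade to weak-$*$ convergence, is also the paper's key observation.

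However, step (2) contains a genuine error. You claim bounded cancellation yields a reparametrization $\sigma$ with $|\sigma(j) - (\text{const})\cdot j|$ bounded and that $|h(u_k)|/|u_k|$ converges to a positive constant. Neither holds in general: an automorphism $h$ can stretch different generators by different factors (e.g.\ $h(a)=ab$, $h(b)=b$ on $F_2$), so the prefix-length map $j \mapsto |h(\lceil u\rceil_j)|$ is only \emph{bi-Lipschitz} with constants depending on $h$, not close to linear, and the total length ratio need not stabilize along the sequence $u_k$. What bounded cancellation actually gives is $\frac{1}{C}(j_2-j_1) - C \leq \sigma(j_2)-\sigma(j_1) \leq C(j_2-j_1)$, so after normalizing, $\tau_*\lambda$ is only absolutely continuous with density bounded above and below---not close to $\lambda$ itself. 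Consequently the claim in step (3) that the two integrals ``differ by $O(1/k)+o(1)$'' is not supported.

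This weaker control is enough, and it is how the paper proceeds (in different language). Rather than showing $\drk_k h(u_k)$ and $(h_{ab})_*\drk_k u_k$ are \emph{close}, the paper establishes a two-sided \emph{multiplicative} sandwich
\[
\tfrac{1}{2C^3}\,(h_{ab})_*\drk v\,[X] \;\leq\; \drk\, hv\,[X] \;\leq\; 2C^3\,(h_{ab})_*\drk v\,[B_{R+C}(X)]
\]
by counting edge crossings: a pass of $\oshd v$ through an edge $e$ forces, via Theorem \ref{thurston}, a pass of $\oshd hv$ through some edge in $B_C(h_{ab}e)$, with at most $C$-to-one collisions and at most a factor-$C$ change in edge length and total length. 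Since $(h_{ab})_*\drk_k u_k$ tends to the point mass at $h_{ab}Q$, any ball not containing $h_{ab}Q$ gets $\drk_k h(u_k)$-measure tending to zero by the upper bound, and being probability measures the mass must concentrate at $h_{ab}Q$. Your argument is repaired in exactly this way: drop the approximate-linearity claim, use the bi-Lipschitz bound to obtain the multiplicative sandwich, and then invoke the point-mass limit. Your diagnosis in the final paragraph is correct; your proposed implementation in step (2) overshoots what is both true and needed.
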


\begin{proof}
\nid Let $C_1 = C_1(h)$ be a number as provided by Theorem \ref{thurston} for the automorphism $h$.  Let $C_2 = \frac{l_M}{l_m}$ where $l_M$ is the length of the longest edge of $\mathcal{R}_n$ and $l_m$ is the length of the shortest. Let $C_3$ be a number such that for any $x \in F_n$ we have that $\frac{1}{C_3} \| x\| \leq \|hx \| \leq C_3 \|x \|$, where $\| . \|$ is the word length metric on $F_n$. Let $C = \max \{C_1, C_2, C_3 \}$.

 Given any $v \in F_n$, and any edge $e$ of $\shd v$, corresponding to the edge $d$ of $\mathcal{R}_n$, we have that $\drk v \big[ e \big] = \frac{l(d) n_e}{l(v)}$, where $n_e$ is the number of times $\oshd v$ passes through $e$. Similarly to the argument in Lemma \ref{conjugation}, if $\oshd v$ passes through $e$, then $\oshd gv$ passes through at least one edge in $B_C( h_{ab} e)$, which will have at length at least $\frac{1}{C} l(d)$. Furthermore, by the same reasoning, if $\oshd v$ passes $j$ times through $e$, any two of which are separated by at least $C$ edges, then $\oshd hv$ passes at least $j$ times through edges in $B_C(h_{ab} e)$, each of which has length at most $C l(e)$ . Also notice that $l(hv) < Cl(v)$. Thus:

$$\drk hv \big[ B_C(h_{ab} e)\big] \geq \frac{\frac{1}{C} l(d) \lfloor \frac{n_e}{C}\rfloor  }{Cl(v)} \geq \frac{1}{2C^3} \drk v \big[ e \big] $$

 \nid By replacing $C$ by a larger constant, we may assume that it is a small cancellation constant and a quasi-isometry constant for $h^{-1}$ as well. Let $R>0$ be a sufficiently large number such that $h_{ab}^{-1} B_C(0) \subset B_R(0)$. Then, by the same reasoning:

 $$\drk v \big[B_{C + R} (e) \big] \geq \frac{1}{2C^3} \drk hv \big[B_C(g_{ab} e) \big]  $$

\nid Thus, for any open ball $X \subset \BR^N$ we have that $$\frac{1}{2C^3}h_{ab*} \drk v \big[X\big] \leq \drk hv \big[ X\big] \leq 2C^3 h_{ab*} \drk v \big[B_{R+C}(X) \big]  $$

\nid And thus, for any $k$: $$\frac{1}{2C^3}h_{ab*} \drk_k v \big[X\big] \leq \drk hv \big[ X\big] \leq 2C^3 h_{ab*} \drk_k v \big[B_{\frac{R+C}{k}}(X) \big]  $$

\nid Writing: $ \drk_k h f_A^k f^{-1} w = \drk h\big( f_A^k (h^{-1}w )\big) $, and using the fact that $\lim_{k\to \infty}\drk_k  f_A^k (h^{-1}w ) = \drk_{\infty} f$, the above inequality implies that if $\drk_{\infty} f \big[ X \big] = 0$ (that is - $h_{ab}^{-1}X$ does not contain the center of mass of $f$) then $\lim_{k \to \infty}   \drk_k h f_A^k f^{-1} w \big[ X\big] = 0$. Since all of the above measures are probability measures, this implies  $$\lim_{k\to \infty} \drk_k h f_A^k h^{-1} w \big[ h_{ab}\overline{\drk_{\infty} \phi} \big] = 1$$ \nid Therfore, $\lim_{k \to \infty} \drk_k h f_A^k f^{-1} w = h_{ab*} \drk_{\infty} f$, as required.

\end{proof}

\vspace{2mm}


\begin{thebibliography}{99}
\bibitem{BeFeH} M. Bestvina, M. Feighn, and H. Handel. Laminations, trees, and irreducible automorphisms of free groups. {\it Geom. Func. Anal.} \textbf{7} (1997), no. 2, 215--244, Erratum: \textbf{7} (1997) no. 6, 1143.
\bibitem{BeH} M. Bestvina, and H. Handel. Train tracks and automorphisms of free groups. {\it Ann. Math.} 135 (1995) 1--51.
\bibitem{flp}A. Fathi and F. Laudenbach and V. Po\'enaru.  {\it Travaux de Thurston sur les Surfaces}.  Soc. Math. de France, Paris,  Ast\'erisque 66-67,  1979.
\bibitem{coop} D. Cooper. Automorphisms of free groups have finitely generated fixed point sets. {\it J. Alg.} 111 (1987) 453--456.
\bibitem{Zor} A. Zorich. How do the leaves of a closed 1-form wind around a surface. In the collection: "Pseudoperiodic Topology" {\it AMS Translations}, Ser. 2, vol. 197, AMS, Providence, RI, 135--178 (1999)


\end{thebibliography}
\end{document}